\theoremstyle{plain}
\newtheorem{theorem}{Theorem}[section]
\newtheorem{lemma}[theorem]{Lemma}
\newtheorem{proposition}[theorem]{Proposition}
\theoremstyle{definition}
\newtheorem{remark}[theorem]{Remark}
\newtheorem{remarks}[theorem]{Remarks}
\newtheorem{example}[theorem]{Example}
\newcommand\bC{{\mathbb C}}
\newcommand\bG{{\mathbb G}}
\newcommand\bP{{\mathbb P}}
\newcommand\bQ{{\mathbb Q}}
\newcommand\bS{{\mathbb S}}
\newcommand\bX{{\mathbb X}}
\newcommand\cK{{\mathcal K}}
\newcommand\cL{{\mathcal L}}
\newcommand\cO{{\mathcal O}}
\newcommand\cU{{\mathcal U}}
\newcommand\fb{\mathfrak{b}}
\newcommand\fg{\mathfrak{g}}
\newcommand\fl{\mathfrak{l}}
\newcommand\fp{\mathfrak{p}}
\newcommand\hf{\widehat{f}}
\newcommand\hx{\widehat{x}}
\newcommand\hy{\widehat{y}}
\newcommand\hw{\widehat{w}}
\newcommand\hC{\widehat{C}}
\newcommand\hX{\widehat{X}}
\newcommand\hY{\widehat{Y}}
\newcommand\hpi{\widehat{\pi}}
\newcommand\rA{{\rm A}}
\newcommand\rB{{\rm B}}
\newcommand\rC{{\rm C}}
\newcommand\rD{{\rm D}}
\newcommand\rE{{\rm E}}
\newcommand\tf{{\widetilde f}}
\newcommand\tw{{\widetilde w}}
\newcommand\tx{{\widetilde x}}
\newcommand\tpi{{\widetilde \pi}}
\newcommand\tC{{\widetilde C}}
\newcommand\tX{{\widetilde X}}
\newcommand\bir{{\rm bir}}
\newcommand\charac{{\rm char}}
\renewcommand\deg{{\rm deg}}
\newcommand\emfr{{\rm emfr}}
\newcommand\free{{\rm fr}}
\newcommand\he{{\rm ht}}
\newcommand\id{{\rm id}}
\newcommand\n{{\rm n}}
\newcommand\pr{{\rm pr}}
\newcommand\sm{{\rm sm}}
\newcommand\Aut{{\rm Aut}}
\newcommand\Chow{{\rm Chow}}
\newcommand\Hom{{\rm Hom}}
\newcommand\Lie{{\rm Lie}}
\newcommand\Peaks{{\rm Peaks}}
\newcommand\Pic{{\rm Pic}}
\newcommand\RatCurves{{\rm RatCurves}}
\newcommand\SL{{\rm SL}}
\newcommand\Supp{{\rm Supp}}
\newcommand\Univ{{\rm Univ}}
\numberwithin{equation}{section}
\title{Minimal rational curves on generalized Bott-Samelson varieties}
\author{Michel Brion and S.~Senthamarai Kannan}
\date{}
\begin{document}

\begin{abstract}
We investigate families of minimal rational curves on Schubert 
varieties, their Bott-Samelson desingularizations, and their generalizations
constructed by Nicolas Perrin in the minuscule case. In particular, we describe 
the minimal families on small resolutions of minuscule Schubert varieties.
\end{abstract}

\maketitle

\section{Introduction}
\label{sec:int}

Lines in flag varieties have been extensively investigated. In particular,
for a homogeneous space $X = G/P$ where $G$ is a semi-simple
algebraic group and $P$ a maximal parabolic subgroup, the lines in $X$
passing through the base point $x$ form a smooth projective variety
$\cL_x$ on which $P$ acts with one or two orbits. When $P$ is
associated to a long simple root, $\cL_x$ is the $P$-orbit of the
Schubert line in $X$; moreover, the homogeneous space $\cL_x$ 
is minuscule (see \cite{CC, Strickland, LM} for these results).

The variety $\cL_x$ features prominently in work of Hwang and Mok 
establishing rigidity properties of $X$ (see e.g.~\cite{HwM}).
Its analogue for a smooth Schubert variety $Y$ of $X$ is an
important ingredient in the study of the deformations of $Y$ within 
$X$, by Hong et al.~(see \cite{HoM, Hong, HoKw}). But little seems 
to be known about lines in possibly singular Schubert varieties. 
The latter admit natural resolutions of singularities, 
the Bott-Samelson varieties and their generalizations introduced 
by Sankaran and Vanchinathan (see \cite{SV94, SV95}), and by 
Perrin in \cite{Pe07}. For these generalized Bott-Samelson 
resolutions, the notion of lines (which depends on a projective 
embedding) may be replaced with the intrinsic notion of minimal 
rational curves. In loose terms, a family of rational curves 
on a projective variety $X$ is minimal if the subfamily of curves 
through a general point $x \in X$ is non-empty and proper. 
The minimal families and the associated varieties of 
minimal rational tangents (consisting, in loose terms again, 
of the tangent directions at $x$ of the curves through that point) 
play an important r\^ole in the geometry of $X$, 
see e.g.~\cite{Hwang14}.

In this paper, we make the first steps in the investigation of lines 
in Schubert varieties, and minimal families in their generalized 
Bott-Samelson resolutions. Given a Schubert variety 
$Y$ in $X = G/P$ with $P$ maximal, it is easy to show that 
$Y$ is covered by translates of the Schubert line (see Lemma 
\ref{lem:covered} for details). If in addition $P$ is associated 
to a long simple root, then the Chow variety of lines through 
a general point $y \in Y$ is a union of Schubert varieties in 
$\cL_y$ (Proposition \ref{prop:linesbis}); it may be reducible  
(Example \ref{ex:linesbis}). 

For Bott-Samelson resolutions, the minimal families turn out 
to be very restricted: there are only finitely many minimal rational 
curves through a general point (see Theorem \ref{thm:bott} 
for details). This may be explained by the fact that Bott-Samelson 
resolutions are big (i.e., some fibers have a big dimension) and not 
canonical (indeed, the action of the connected automorphism group 
of the Schubert variety need not lift to the resolution; see 
\cite[\S 7]{CKP} for explicit examples).

By contrast, generalized Bott-Samelson resolutions include 
the small resolutions of minuscule Schubert varieties constructed 
by Zelevinsky (see \cite{Zelevinsky}), Sankaran and Vachinathan,
and Perrin in full generality (see \cite[Cor.~7.9]{Pe07}).
These resolutions are obtained as towers of locally trivial fibrations, 
with fibers being minuscule homogeneous spaces. We describe 
their minimal families in terms of lines in these homogeneous spaces 
(Theorem \ref{thm:final}). This relies on a structure result for minimal 
families on generalized Bott-Samelson resolutions (Proposition 
\ref{prop:isom}), and on two combinatorial properties of these resolutions
(Propositions \ref{prop:root} and \ref{prop:weyl}). Both properties 
were first proved in the companion article \cite{BK} via case-by-case
arguments using reduced decompositions in Weyl groups.
Then Perrin came up with uniform proofs based on the combinatorics 
of minuscule quivers developed in his papers \cite{Pe05,Pe07,Pe09}. 
Subsequently, we obtained somewhat shorter uniform proofs, which 
are presented here.

Our results are obtained over an algebraically closed field 
of arbitrary characteristic, whereas the setting of most earlier works 
is complex geometry. In particular, the theory of minimal rational 
curves seems to have been exclusively developed over $\bC$ so far. 
Thus, we only rely on foundational material from \cite[Chap.~II]{Kollar} 
(see also \cite[Chap.~II]{Debarre}). The facts that we need are 
gathered in \S \S \ref{subsec:spaces} and \ref{subsec:families}.

\S \ref{subsec:almost} contains auxiliary results
on almost homogeneous varieties, i.e., those on which
an algebraic group acts with an open dense orbit; this class
includes Schubert varieties, their (generalized) Bott-Samelson
resolutions, and some naturally associated varieties. In this 
special setting, we obtain analogues of important general 
results on the existence and properties of free rational curves, 
which hold over $\bC$ but generally fail in positive characteristics
(see Remark \ref{rem:nofree}). For this, we develop methods 
from \cite[\S 2]{BF}.

In \S \S \ref{subsec:flag} and \ref{subsec:schubert}, we set up
notation and recall basic facts on flag varieties and their Schubert 
varieties. The minimal rational curves on the former are described 
in \S \ref{subsec:lines}, whereas \S \ref{subsec:linesbis} explores
the families of lines on the latter. An essential r\^ole is played
by the curves which are stable by a maximal torus $T$ of $G$.
Our results are most complete for minuscule homogeneous spaces; 
these may be characterized by the condition that every $T$-stable 
curve is a line.

The minimal rational curves on Bott-Samelson desingularizations 
are considered in \S \ref{subsec:bott}. In \S \ref{subsec:perrin},
we survey the construction of their generalizations, due to Perrin 
in \cite[\S 5]{Pe07}. The structure of their minimal families is 
investigated in \S \ref{subsec:structure}; again, $T$-stable curves form 
a key ingredient in all these developments. We illustrate our results 
on the simplest example of a singular Schubert variety: a quadratic 
cone of dimension $3$ (Examples \ref{ex:linesbis} and \ref{ex:final}). 
Further examples, also involving exceptional groups, can be found 
in \cite{BK}. 

The final Section \ref{sec:comb} is devoted to combinatorial 
properties of generalized Bott-Samelson desingularizations. 
In \S \ref{subsec:root}, we obtain an inequality involving certain 
positive roots; an equality of Weyl groups of isotropy groups
is proved in \S \ref{subsec:weyl}.

Our approach raises many open questions: for example,
to parameterize the families of lines in a Schubert variety in
combinatorial terms, and to characterize those families that
contain lines consisting of smooth points. Also, it would be 
interesting to extend our results to the setting of cominuscule 
homogeneous varieties, or to the horizontal Schubert varieties
introduced in \cite{KR}.

\bigskip

\noindent
{\bf Acknowledgments.} 
We thank Jaehyun Hong for helpful email exchanges, and the referee 
for a careful reading and valuable corrections. Special thanks 
are due to Nicolas Perrin for his insightful comments and suggestions; 
as mentioned above, he first obtained uniform proofs of two key 
combinatorial results, and the proofs presented here are partly based 
on his arguments.

The second-named author would like to thank the Institut Fourier 
for the hospitality during his stay. He also thank the Infosys Foundation
for the partial financial support.

\medskip

\noindent
{\bf Notation and conventions.}
The ground field $k$ is algebraically closed, of characteristic 
$p \geq 0$. By a \emph{scheme}, we mean a separated $k$-scheme
$S$; \emph{points} of $S$ are $k$-rational points unless otherwise
stated. A \emph{variety} is an integral scheme of finite type over $k$. 
A \emph{curve} is a variety of dimension $1$.

An \emph{algebraic group} is a group scheme of finite type. 
Given an algebraic group $G$, a subgroup scheme $H$
and a scheme $Y$ equipped with an action of $H$, we denote
by $G \times^H Y$ the quotient of $G \times Y$ by the $H$-action
via $h \cdot (g,y) := (gh^{-1},h y)$, if this quotient exists as a scheme.
We then have a cartesian square 
\[ \xymatrix{
G \times Y \ar[r]^-{\pr_1} \ar[d]_{q} & G \ar[d]^{r} \\
G \times^H Y \ar[r]^-{f} & G/H, \\
} \]
where $\pr_1$ denotes the first projection and $q,r$ are
principal $H$-bundles. As a consequence, $f$ is faithfully flat. 
Moreover, the $G$-action on $G \times Y$ via left multiplication 
on $G$ descends to a unique action on $G \times^H Y$, and 
$f$ is $G$-equivariant. We may view $f$ as a homogeneous 
fibration with fiber $Y$.

By \cite[Prop.~7.1]{MFK}, the associated fiber bundle 
$G \times^H Y$ exists if $Y$ admits an ample $H$-linearized 
line bundle. We will freely use the following observation:
if $X$ is a scheme equipped with an action of $G$ and 
an equivariant morphism $\pi : X \to G/H$ with fiber $Y$
at the base point of $G/H$, then there is a unique 
$G$-equivariant isomorphism $X \simeq G \times^H Y$ 
identifying $\pi$ with $f$.

\section{Rational curves on almost homogeneous varieties}
\label{sec:rational}

\subsection{Spaces of rational curves}
\label{subsec:spaces}

Let $X$ be a projective variety. The scheme of morphisms 
$\Hom(\bP^1,X)$ is equipped with an action of $\Aut(\bP^1)$ 
that stabilizes the open subscheme $\Hom_{\bir}(\bP^1,X)$ 
consisting of morphisms which are birational to their image. 
Moreover, this action lifts uniquely to an action on the normalization 
\[ \eta : \Hom_{\bir}^{\n}(\bP^1,X) \longrightarrow \Hom_{\bir}(\bP^1,X) \]
By \cite[II.2.15]{Kollar}, there is a natural commutative diagram 
of normal schemes
\begin{equation}\label{eqn:hom}
\xymatrix{
\bP^1 \times \Hom_{\bir}^{\n}(\bP^1,X) \ar[r]^-{\lambda} \ar[d]_{\pr_2} & 
\Univ(X) \ar[d]^{\rho} \\
\Hom_{\bir}^{\n}(\bP^1,X) \ar[r]^{\kappa} & \RatCurves(X), \\
}
\end{equation}
where the horizontal arrows are principal $\Aut(\bP^1)$-bundles. 
As a consequence, the above diagram is cartesian; moreover,
$\rho$ is a $\bP^1$-bundle. 

In view of \cite[II.2.11]{Kollar}, there is another natural 
commutative diagram
\begin{equation}\label{eqn:chow}
\xymatrix{
\Univ(X) \ar[r]^-{\delta} \ar[d]_{\rho} & 
\Chow(X) \times X \ar[d]^{\pr_1} \\
\RatCurves(X) \ar[r]^{\gamma} & \Chow(X), \\
}
\end{equation}
where $\Chow(X)$ denotes the Chow scheme. Moreover,  
$\gamma$ is finite over its image, which is the locally 
closed subscheme of $\Chow(X)$ parameterizing irreducible, 
geometrically rational $1$-cycles; also, $\delta$ is finite 
over its image, which is the universal Chow family over 
the above subscheme. By composing $\delta$ with the second 
projection, we obtain a morphism 
\begin{equation}\label{eqn:mu}
\mu : \Univ(X) \longrightarrow X 
\end{equation}
such that the morphism 
$\rho \times \mu : \Univ(X) \to \RatCurves(X) \times X$
is finite. 

The morphism (\ref{eqn:mu}) can be constructed alternatively
as follows: by composing the evaluation map
\begin{equation}\label{eqn:ev} 
\bP^1 \times \Hom_{\bir}(\bP^1,X) \longrightarrow X,
\quad (t,f) \longmapsto f(t) 
\end{equation}
with the map 
$\id \times \eta : \bP^1 \times \Hom_{\bir}^{\n}(\bP^1,X) 
\longrightarrow \bP^1 \times \Hom_{\bir}(\bP^1,X)$, 
we obtain a morphism
\[ \varepsilon : \bP^1 \times \Hom_{\bir}^{\n}(\bP^1,X) 
\longrightarrow X. \]
One may check that $\varepsilon$ is the composition of 
the quotient map
\[ \lambda : \bP^1 \times \Hom_{\bir}^{\n}(\bP^1,X) \to \Univ(X) \] 
with $\mu$. In particular, for any $x \in X$, we have a principal 
$\Aut(\bP^1)$-bundle $\varepsilon^{-1}(x) \to \mu^{-1}(x)$
between (scheme-theoretic) fibers. The first projection
$\pr_1 : \varepsilon^{-1}(x) \to \bP^1$ is $\Aut(\bP^1)$-equivariant,
and $\bP^1$ may be identified with the homogeneous space
$\Aut(\bP^1)/\Aut(\bP^1,0)$. This identifies 
$\varepsilon^{-1}(x)$ with the associated fiber bundle
$\Aut(\bP^1)\times^{\Aut(\bP^1,0)} \pr_1^{-1}(0)$.
Also, note that 
\[ \pr_1^{-1}(0) \simeq \eta^{-1}(\Hom_{\bir}(\bP^1,X; 0 \mapsto x)) \]
equivariantly for $\Aut(\bP^1,0)$, where 
$\Hom_{\bir}(\bP^1,X; 0 \mapsto x)$ denotes the closed subscheme
of $\Hom_{\bir}(\bP^1,X)$ consisting of those morphisms $f$
such that $f(0) = x$. Putting all of this together, we obtain a principal
$\Aut(\bP^1,0)$-bundle
\begin{equation}\label{eqn:bundle} 
\eta^{-1}(\Hom_{\bir}(\bP^1,X; 0 \mapsto x)) \to \mu^{-1}(x). 
\end{equation} 
We may view $\mu^{-1}(x)$ as the space of rational curves on
$X$ through $x$.

Another space of rational curves on $X$ through $x$ is constructed
in \cite[II.2.16]{Kollar}. More specifically, there is a natural 
commutative diagram of normal schemes
\[ \xymatrix{
\bP^1 \times \Hom_{\bir}^{\n}(\bP^1,X; 0 \mapsto x) 
\ar[r] \ar[d]_{\pr_2} & \Univ(x,X) \ar[d] \\
\Hom_{\bir}^{\n}(\bP^1,X; 0 \mapsto x) \ar[r] & \RatCurves(x,X), \\
} \] 
where the horizontal arrows are principal $\Aut(\bP^1,0)$-bundles
and the vertical arrows are $\bP^1$-bundles. (Here 
$\Hom_{\bir}^{\n}(\bP^1,X; 0 \mapsto x)$ denotes the normalization of
$\Hom_{\bir}(\bP^1,X; 0 \mapsto x)$). In general, the relation between 
$\mu^{-1}(x)$ and $\RatCurves(x,X)$ is not clear to us. But we will see
that both share a common smooth open subscheme, consisting of 
the free curves through $x$; these may be defined as follows.

Let $f \in \Hom_{\bir}(\bP^1,X)$, and $C$ its image. We say that 
$f$ is \emph{free} if $C$ is contained in the smooth locus $X_{\sm}$ 
and the vector bundle $f^*(T_{X_{\sm}})$ is generated by its global
sections, where $T_{X_{\sm}}$ denotes the tangent bundle of
$X_{\sm}$. 

Every free morphism $f$ satisfies $H^1(\bP^1,f^*(T_{X_{\sm}})) = 0$. 
Thus, the above notion of freeness coincides with that of 
\cite[II.3.1]{Kollar} when $X$ is smooth. By [loc.~cit.],
the free morphisms form a smooth open subscheme 
$\Hom_{\free}(\bP^1,X)$ of $\Hom(\bP^1,X)$; its dimension 
at the point $C$ is $- K_{X_{\sm}} \cdot C +\dim(X)$, 
where $K_{X_{\sm}}$ stands for the canonical class of 
the smooth locus. We denote by $\RatCurves_{\free}(X)$ 
the corresponding smooth open subscheme of $\RatCurves(X)$.

We also have $H^1(\bP^1,f^*(T_{X_{\sm}})(-1)) = 0$ whenever 
$f$ is free. As a consequence, the free morphisms that send
$0$ to $x$ form a smooth open subscheme 
$\Hom_{\free}(\bP^1,X; 0 \to x)$ of $\Hom(\bP^1,X; 0 \to x)$,
with dimension at $C$ being $- K_{X_{\sm}} \cdot C$ (see 
\cite[II.1.7, II.3.2]{Kollar} for these results).

Thus, any free morphism $f$ yields a smooth point $C$ of
$\RatCurves(X)$. Since the evaluation map (\ref{eqn:ev})
is smooth along $\bP^1 \times f$ (see \cite[II.3.5.4]{Kollar}),
$\mu^{-1}(x)$ is smooth at $C$ as well. As a consequence,
$\mu^{-1}(x)$ and $\RatCurves(x,X)$ share a common
smooth open subscheme $\RatCurves_{\free}(x,X)$, the quotient of  
$\Hom_{\free}(\bP^1,X; 0 \to x)$ by $\Aut(\bP^1,0)$.
The dimension of $\RatCurves_{\free}(x,X)$ at $C$ equals 
$-K_{X_{\sm}} \cdot C - 2$.

Consider again $f \in \Hom_{\bir}(\bP^1,X)$ with image $C$.
We say that $C$ is \emph{embedded} if $f$ is an immersion
into $X_{\sm}$. This is an open property in view of
\cite[I.1.10.1]{Kollar}, and hence the embedded free curves
form an open subscheme $\RatCurves_{\emfr}(X)$ of
$\RatCurves(X)$.

\begin{lemma}\label{lem:em}
For any $x \in X_{\sm}$, the natural map 
$\rho_x : \mu^{-1}(x) \to \RatCurves(X)$ restricts to an immersion 
on the smooth open subscheme consisting of embedded free curves. 
\end{lemma}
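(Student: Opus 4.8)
The plan is to verify that $\rho_x$, restricted to the open subscheme $U \subseteq \mu^{-1}(x)$ of embedded free curves through $x$, is a locally closed immersion by establishing three facts: that it is unramified, that it is universally injective, and that these combine with a finiteness property to force an immersion. Observe first that $U$ is smooth, being open in the smooth scheme $\RatCurves_{\free}(x,X)$ recalled above, and that its image lands in the smooth open subscheme $\RatCurves_{\emfr}(X)$, since freeness and embeddedness are properties of the curve $C$ alone, independent of the marking. Moreover, because $\rho \times \mu : \Univ(X) \to \RatCurves(X) \times X$ is finite, its base change along the inclusion $\RatCurves(X) \times \{x\} \hookrightarrow \RatCurves(X) \times X$ identifies with $\rho_x : \mu^{-1}(x) \to \RatCurves(X)$ and shows that the latter is finite; this finiteness will drive the final step.

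The heart of the argument is the infinitesimal injectivity of $\rho_x$ along $U$. Let $f : \bP^1 \to X$ be a free immersion with image $C$, normalized so that $f(0) = x$, and let $df : T_{\bP^1} \to f^*T_{X_{\sm}}$ be its differential, which is injective on each fibre because $C$ is embedded. As recalled above, freeness makes $\Hom(\bP^1,X)$ smooth at $[f]$ with tangent space $H^0(\bP^1, f^*T_{X_{\sm}})$, while $\Hom_{\free}(\bP^1, X; 0 \mapsto x)$ has tangent space $H^0(\bP^1, f^*T_{X_{\sm}}(-1))$, the sections vanishing at $0$ (here $(-1)$ denotes the twist by $\cO_{\bP^1}(-1)$). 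Passing to the quotients by $\Aut(\bP^1)$ and $\Aut(\bP^1,0)$, whose Lie algebras $H^0(T_{\bP^1})$ and $H^0(T_{\bP^1}(-1))$ act through $df$, the tangent map of $\rho_x$ at $[(C,0)]$ is the map induced by $H^0(f^*T_{X_{\sm}}(-1)) \hookrightarrow H^0(f^*T_{X_{\sm}})$ on the quotients
\[ H^0(f^*T_{X_{\sm}}(-1))/df\,H^0(T_{\bP^1}(-1)) \longrightarrow H^0(f^*T_{X_{\sm}})/df\,H^0(T_{\bP^1}). \]
To see this is injective, suppose $s \in H^0(f^*T_{X_{\sm}}(-1))$ satisfies $s = df(v)$ for some $v \in H^0(T_{\bP^1})$. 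Since $s$ vanishes at $0$ and $df$ is injective on fibres, evaluating at $0$ gives $v(0)=0$, so $v \in H^0(T_{\bP^1}(-1))$ and $[s]=0$ already in the source. Hence $\rho_x$ is unramified along $U$.

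It remains to promote ``unramified and injective'' to ``immersion''. Because an embedded curve $C$ meets $x$ in a single point, each fibre of $\rho_x|_U$ is one $k$-point; the ground field being algebraically closed, $\rho_x|_U$ is universally injective, and together with unramifiedness it is a monomorphism. To obtain an immersion I would exploit finiteness as follows. Since $\rho_x$ is finite, $\rho_x(\mu^{-1}(x) \setminus U)$ is closed, and its complement $Z' \subseteq \RatCurves(X)$ is open with $\rho_x^{-1}(Z') = U$: indeed no point of $\mu^{-1}(x)$ outside $U$ maps to the class of an embedded free curve through $x$, as such a class has a unique preimage, which lies in $U$. Thus $\rho_x|_U : U \to Z'$ is the base change of the finite morphism $\rho_x$ to $Z'$, hence finite; being a finite monomorphism it is a closed immersion, and composing with the open immersion $Z' \hookrightarrow \RatCurves(X)$ exhibits $\rho_x|_U$ as an immersion.

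The main obstacle is the tangent-space step: one must pin down the identifications of the tangent spaces of $\mu^{-1}(x)$ and $\RatCurves(X)$ with the cohomology quotients above, which uses freeness both to guarantee smoothness of the relevant $\Hom$-schemes and their quotients and to ensure the vanishing of $H^1(f^*T_{X_{\sm}})$ and $H^1(f^*T_{X_{\sm}}(-1))$, and then invoke precisely the embeddedness of $C$, through the fibrewise injectivity of $df$, to run the evaluation-at-$0$ argument. Once these identifications are in place, everything else is formal.
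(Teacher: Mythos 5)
Your proposal is correct and follows essentially the same route as the paper: the paper likewise proves injectivity on $k$-points from the uniqueness of the marked point on an embedded curve, and proves injectivity of the differential by exactly your computation, namely that the tangent map is the induced map $H^0(f^*T_{X_{\sm}}(-1))/df\,H^0(T_{\bP^1}(-1)) \to H^0(f^*T_{X_{\sm}})/df\,H^0(T_{\bP^1})$, whose injectivity follows from the injectivity of $df$ at $0$. The only difference is that the paper stops there, leaving implicit the promotion of ``injective and unramified'' to ``immersion,'' whereas you correctly complete this step using the finiteness of $\rho_x$ (base change of the finite morphism $\rho \times \mu$) and the fact that a finite monomorphism is a closed immersion --- a worthwhile explicit supplement to the paper's terse ending.
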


\begin{proof}
By the above discussion, we may view $\rho_x$ as
the natural map 
\[ \pi : \Hom_{\emfr}(\bP^1,X; 0 \to x)/\Aut(\bP^1,0)
\longrightarrow \Hom_{\emfr}(\bP^1,X)/\Aut(\bP^1) \]
with an obvious notation. 

We check that $\pi$ is injective on $k$-rational points. 
For any $f_1,f_2$ in  $\Hom_{\emfr}(\bP^1,X; 0 \to x)$
such that $f_2 = f_1 \circ \varphi$ for some
$\varphi \in \Aut(\bP^1)$, we have 
$x = f_2(0) = f_1(\varphi(0)) = f_1(0)$,
and hence $\varphi(0) = 0$ as desired.

Next, we check that the differential of $\pi$ 
at any $k$-rational point is injective. Let 
$f \in \Hom_{\emfr}(\bP^1,X; 0 \to x)$; then
we have a commutative diagram (with an obvious
notation again)
\[ \xymatrix{
0 \ar[r] & \Lie \Aut(\bP^1,0) \ar[r] \ar[d] ^{=}
& \Lie \Aut(\bP^1) \ar[r] \ar[d]^{=} & T_0 \bP^1 
\ar[r] \ar[d]^{=} & 0 \\
0 \ar[r] & H^0(\bP^1,T_{\bP^1}(-1)) \ar[r] \ar[d]
& H^0(\bP^1,T_{\bP^1}) \ar[r] \ar[d] & T_0 \bP^1  
\ar[r] \ar[d]^{df_0} & 0 \\
0 \ar[r] & H^0(\bP^1, f^*(T_{X_{\sm}})(-1)) \ar[r]  & 
H^0(\bP^1, f^*(T_{X_{\sm}}))  \ar[r] & T_x X \ar[r] & 0, \\
} \]
where $df_0$ is injective. So the induced map
\[ H^0(\bP^1, f^*(T_{X_{\sm}})(-1)))/ \Lie \Aut(\bP^1,0)
\longrightarrow 
H^0(\bP^1, f^*(T_{X_{\sm}}))/ \Lie \Aut(\bP^1) \]
is injective as well, as desired.
\end{proof}

\subsection{Families of rational curves}
\label{subsec:families}

The normal scheme $\RatCurves(X)$ is the disjoint union of open 
and closed normal quasi-projective varieties. These (connected 
or irreducible) components are called the 
\emph{families of rational curves on $X$}. 
Every such family $\cK$ comes with a universal family 
$\cU := \rho^{-1}(\cK) \to \cK$, where $\cU$ is a component 
of $\Univ(X)$. For any $x \in X$, we denote the fiber of 
$\mu : \cU \to X$ by $\cU_x$, and let $\cK_x := \rho(\cU_x)$; 
then the induced morphism $\rho_x : \cU_x \to \cK_x$ is finite.
The family $\cK$ is \emph{covering} if $\mu$ is dominant, i.e.,
$\cK_x$ (or equivalently $\cU_x$) is non-empty for a general
point $x$.  If in addition $\cK_x$ (or equivalently $\cU_x$)
is projective for $x$ general, then $\cK$ is called a
\emph{family of minimal rational curves}, or just a 
\emph{minimal family} for simplicity. Examples of minimal families
include the covering families of lines in some projective embedding 
of $X$; also, note that lines contained in the smooth locus yield
examples of embedded curves.

For any family of rational curves $\cK$ as above, there exists 
a unique irreducible component $Z$ of $\Hom_{\bir}(\bP^1,X)$ 
together with a principal $\Aut(\bP^1)$-bundle $Z^{\n} \to \cK$, 
where $Z^{\n}$ denotes the normalization. Let $Z_{\free}$ denote 
the smooth open subscheme of $Z$ consisting of free morphisms; 
then $Z_{\free}$ is stable under $\Aut(\bP^1)$, and hence we
obtain a principal $\Aut(\bP^1)$-bundle $Z_{\free} \to \cK_{\free}$,
where $\cK_{\free} \subset \cK$ denotes the smooth open 
subscheme of free curves. We may view the points of $\cK_{\free}$ 
as (possibly singular) rational curves in $X$. For any such
curve $C$, the fiber of $\rho$ at $C$ is a projective line, 
and the restriction of $\mu$ to this fiber yields the 
normalization map of $C$. Also, given $x \in X_{\sm}$,
the morphism $\rho_x : \cU_x \to \cK_x$ restricts to 
an isomorphism on the open subscheme $\cK_{\emfr,x}$ consisting
of embedded free curves (Lemma \ref{lem:em}). Moreover,
by assigning to each such curve its tangent direction
at $x$, we obtain a morphism
\begin{equation}\label{eqn:tangent}
\tau = \tau_{X,x} : \cK_{\emfr,x} \longrightarrow \bP(T_x X),
\end{equation}
where $\bP(T_x X)$ denotes the projectivization of
the tangent space. If $\charac(k) = 0$ and the point $x$ 
is general, then $\tau$ extends to a finite morphism from 
the normalization of $\cK_x$ (see \cite[Thm.~3.4]{Kebekus}).

\begin{remark}\label{rem:nofree}
Assume that $X$ is smooth. If $\charac(k) = 0$ then every
covering family contains a free curve, as follows from
\cite[II.3.10]{Kollar} together with generic smoothness. 
But this fails if $\charac(k) = p > 0$, as shown by the
following example adapted from \cite[V.1.4.3]{Kollar}. 
Consider the hypersurface $X$ in $\bP^2 \times \bP^2$ 
with homogeneous equation 
\[ x_0 y_0^p + x_1 y_1^p + x_2 y_2^p = 0,  \]
where $x_0, x_1,x_2$ (resp.~$y_0, y_1, y_2$) are homogeneous 
coordinates on the first (resp.~second) copy of $\bP^2$. 
Then $X$ is smooth, the geometric fibers of the first projection 
$\pr_1 : X \to \bP^2$ are all non-reduced, and their reduced 
subschemes are lines. For the corresponding family 
of rational curves, $\cU$ is the hypersurface in 
$\bP^2 \times \bP^2$ with equation 
$x_0 y_0 + x_1 y_1 + x_2 y_2 = 0$, and $\cK = \bP^2$;
in particular, $\cK$ is minimal. Also, the morphism $\rho$ is the 
first projection, and 
\[ \mu( [x_0: x_1: x_2], [y_0: y_1: y_2]) :=  
([x_0^p : x_1^p : x_2^p], [y_0: y_1: y_2]). \] 
In particular, all the geometric fibers of $\mu : \cU \to \cK$ are 
fat points of multiplicity $p$, and hence $\cK$ contains no free 
curve. Note that $X$ is homogeneous under an appropriate 
action of $\Aut(\bP^2)$, and the stabilizer of any $x \in X$ 
is not reduced (or equivalently, not smooth). So $X$ is 
a variety of unseparated flags in the sense of \cite{HL};
one can show that any such variety admits a minimal
family which contains no free curve.
\end{remark}

We now discuss covariance properties under a morphism 
$\pi : X \to Y$, where $Y$ is a projective variety. Let $\cK$ 
be a family of rational curves on $X$, and $Z$ 
the corresponding irreducible component of 
$\Hom_{\bir}(\bP^1,X)$. Assume that there exists $f \in Z$ 
such that the composition $\pi \circ f: \bP^1 \to Y$ is free. 
Then $\pi \circ f$ is a smooth point of a unique irreducible
component $W$ of $\Hom_{\bir}(\bP^1,Y)$, which defines
in turn a family of rational curves $\cL$ on $Y$. 
The composition of natural morphisms 
$Z^{\n} \to Z \to \Hom(\bP^1,X) \to \Hom(\bP^1,Y)$
is $\Aut(\bP^1)$-equivariant and its image contains
a smooth point of $W$. This yields a rational map
\[ \pi_* : \cK \dasharrow \cL, \]
which is defined on the open subset consisting of 
those free curves that are sent to free curves in $Y$.

In the opposite direction, given $\pi$ and $\cK$ as above, 
we say that $\pi$ \emph{contracts} some $C_0 \in \cK$ 
if the composition
$\rho^{-1}(C_0) \stackrel{\mu}{\longrightarrow}
X \stackrel{\pi}{\longrightarrow} Y$ 
is constant. Then $\pi$ contracts all $C \in \cK$: indeed, 
choose an ample line bundle $M$ on $Y$ and let
$L := \mu^* \pi^*(M)$. Then the degree of $L$ on 
$\rho^{-1}(C)$ is independent of $C$ (as follows e.g.~from 
\cite[Prop.~10.3]{Fulton}), and this degree is $0$
if and only if $C$ is contracted by $\pi$.

Based on this observation, we now describe the minimal
families in a product of varieties:

\begin{lemma}\label{lem:product}
Let $Y,Z$ be projective varieties, and $X := Y \times Z$
with projections $\pi : X \to Y$, $\varphi : X \to Z$.

\begin{enumerate}

\item[{\rm (i)}] The pull-back morphism 
\[ \pi^* : \Hom(\bP^1,Y) \times Z \longrightarrow \Hom(\bP^1,X), 
\quad (f,z) \longmapsto (t \mapsto (f(t),z)) \] 
induces a closed immersion 
$\RatCurves(Y) \times Z \to \RatCurves(X)$
with image a union of components. 

\item[{\rm (ii)}] $\pi^*$ sends covering 
(resp.~minimal) families to covering (resp.~minimal) families.

\item[{\rm (iii)}] A family of rational curves $\cK$ on $X$
is the pull-back of a family on $Y$ if and only if $\varphi$ 
contracts some curve in $\cK$.

\item[{\rm (iv)}] Every family of minimal rational curves on $X$ 
is the pull-back of a unique family of minimal rational curves 
on $Y$ or $Z$. 

\end{enumerate}

\end{lemma}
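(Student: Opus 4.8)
First I would establish (i). The pull-back map $\pi^*$ sends a morphism $f : \bP^1 \to Y$ and a point $z \in Z$ to the morphism $t \mapsto (f(t), z)$; since $\pr_1 \circ \pi^* f = f$ and $\pr_2 \circ \pi^* f$ is constant, this lands in $\Hom_{\bir}(\bP^1, X)$ exactly when $f$ is birational to its image, and the induced curves have image $C \times \{z\}$. The key point is that this is a closed immersion: I would argue that a curve $C'$ in $X$ lies in the image precisely when $\varphi$ restricted to $C'$ is constant, which is a closed condition on $\RatCurves(X)$ (by the degree argument recalled just before the lemma, using $L := \mu^* \varphi^*(M)$ for an ample $M$ on $Z$). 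Compatibility with the $\Aut(\bP^1)$-actions descends the immersion $\Hom_{\bir}^{\n}(\bP^1, Y) \times Z \to \Hom_{\bir}^{\n}(\bP^1, X)$ to the level of $\RatCurves$, and since the source is a union of components and the map is open-and-closed onto its image, the image is a union of components of $\RatCurves(X)$.

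Next, for (ii), I would track the fibers of $\mu$. For the pulled-back family $\pi^* \cK$, one has $(\pi^* \cU)_{(y,z)}$ naturally identified with $\cU_y^{Y}$ (the curves in $Y$ through $y$) for the fixed second coordinate $z$; concretely the fiber over $(y,z)$ of the universal family of $\pi^* \cK$ is isomorphic to $\cU_y$ for the $Y$-family. Hence $\pi^* \cK$ is covering over $X$ iff $\cK$ is covering over $Y$, and the fiber is projective iff the corresponding fiber over $y \in Y$ is projective; this gives the minimality statement. For (iii), the degree-of-$L$ argument already shows that if $\varphi$ contracts one curve in $\cK$ it contracts all of them, so every curve in $\cK$ has image of the form $C \times \{z\}$; running the recipe of (i) backwards (project each such curve to $Y$ via $\pi$) identifies $\cK$ with a pull-back $\pi^* \cK'$, and the converse is immediate since pulled-back curves are $\varphi$-constant by construction.

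**The real content, and the step I expect to be the main obstacle, is (iv).** Here I would use the symmetric roles of $\pi$ and $\varphi$: given a minimal family $\cK$ on $X$, consider a general curve $C$ in $\cK$ and its two projections $\pi|_C$ and $\varphi|_C$. The tangent direction at a general point $x = (y,z)$ lies in $T_x X = T_y Y \oplus T_z Z$, and the crux is that a minimal curve cannot move nontrivially in both factors. The clean way to see this is degree-theoretic: let $L_Y := \mu^* \pi^*(M_Y)$ and $L_Z := \mu^* \varphi^*(M_Z)$ for ample bundles $M_Y, M_Z$; both have nonnegative degree on the $\bP^1$-fibers of $\rho$, and the anticanonical degree $-K_X \cdot C = -K_Y \cdot \pi_* C - K_Z \cdot \varphi_* C$ splits additively. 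If neither projection contracts $C$, then $C$ dominates curves in both $Y$ and $Z$, and I would derive a contradiction with minimality by showing the subfamily through a general $x$ would then have strictly positive dimension in an "unbounded" direction — more precisely, one can deform the two projected curves independently, producing a family whose fiber over $x$ is non-proper. The subtle part is making "deform independently" rigorous without characteristic-zero generic smoothness; I expect to lean on the product structure to factor $\Hom_{\bir}(\bP^1, X; 0 \mapsto x)$ as a product of the corresponding $\Hom$-schemes for $Y$ and $Z$ through $y$ and $z$ respectively, so that properness of the fiber forces one factor to be a point, i.e. one projection is constant. Once $\varphi$ (say) contracts $C$, part (iii) exhibits $\cK$ as a pull-back from $Y$, and part (ii) guarantees the resulting family on $Y$ is again minimal; uniqueness follows because $\pi_*$ is a bijection on the relevant curves. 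I would finish by noting the two cases ($\pi$ contracts vs. $\varphi$ contracts) are mutually exclusive for a minimal family precisely because a curve contracted by both would be constant, contradicting that it is a genuine rational curve.
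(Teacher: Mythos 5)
Your parts (i)--(iii) are correct and essentially the paper's own argument: the paper likewise realizes $\pi^*$ as a closed immersion via a cartesian square built from the constant-morphism closed immersion $c : Z \to \Hom(\bP^1,Z)$, and characterizes the image inside $\Hom_{\bir}(\bP^1,X)$ by the locally constant condition $\deg \, \varphi^* f^*(M) = 0$ for an ample $M$ on $Z$, from which (i)--(iii) all follow.

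The genuine gap is in (iv), at exactly the step you flag as subtle. First, the factorization you propose to lean on is false at the level where you need it: $\Hom_{\bir}(\bP^1, X; 0 \mapsto x)$ is \emph{not} the product of $\Hom_{\bir}(\bP^1,Y; 0 \mapsto y)$ and $\Hom_{\bir}(\bP^1,Z; 0 \mapsto z)$, because birationality onto the image is not a factorwise condition --- a map of the shape $t \mapsto (t^2, t^3)$ (in suitable affine coordinates) is birational onto its image while neither component is; only the full $\Hom$-scheme factors as a product. Second, even granting a product structure, ``properness of the fiber forces one factor to be a point'' is not a formal consequence: $\cU_x$ is the quotient of (the normalization of a piece of) that scheme by the \emph{diagonal} action of $\Aut(\bP^1,0)$, and properness of such a quotient does not decompose factorwise; one must exhibit an explicit witness of non-properness. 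The paper supplies it by a degeneration: writing $f = (g,h)$ as $(g \times h) \circ \delta$ with $\delta$ the diagonal embedding of $\bP^1$ in $\bP^1 \times \bP^1$, it notes that if $g$ and $h$ are both non-constant then $g \times h$ is finite, and $\delta(\bP^1)$ degenerates among the $(1,1)$-curves through $(0,0)$ to the reducible curve $(\bP^1 \times \{0\}) \cup (\{0\} \times \bP^1)$; pushing forward under the finite map $g \times h$ degenerates $C$ to a reducible curve through $x$, contradicting minimality (the image of the proper $\cU_x$ in $\Chow(X)$ consists of irreducible rational cycles). Your informal ``deform the two projections independently'' --- reparametrizing $h$ by $\phi_s(t) = st$ and letting $s \to 0$ --- is in fact this same degeneration in disguise, since the graphs of the $\phi_s$ are precisely such $(1,1)$-curves, so your sketch is repairable along these lines; but the justification you actually offer (product factorization plus a formal properness argument) does not work as stated. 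A minor further slip: the splitting $-K_X \cdot C = -K_Y \cdot \pi_* C - K_Z \cdot \varphi_* C$ presupposes smoothness of $Y$ and $Z$, which is not assumed here; since it is only motivational in your write-up, this is harmless.
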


\begin{proof}
One may easily check that the ``constant'' morphism
\[ c : Z \longrightarrow \Hom(\bP^1,Z), \quad 
z \longmapsto (t \mapsto z) \] 
is a closed immersion; moreover, the diagram
\[ \xymatrix{
\Hom(\bP^1, Y) \times Z \ar[r] \ar[d]_{\pi^*} & Z \ar[d]^c \\
\Hom(\bP^1, X) \ar[r] & \Hom(\bP^1, Z)  \\
} \]
is cartesian, where the top horizontal arrow is the projection, and 
the bottom horizontal arrow is the composition with $\varphi$.
Thus, $\pi^*$ is a closed immersion as well. Also, $\pi^*$
sends $\Hom_{\bir}(\bP^1,Y) \times Z$ to $\Hom_{\bir}(\bP^1,X)$, 
and its image (considered with its reduced scheme structure)
consists of those morphisms $f$ such that 
$\deg \, \varphi^* f^*(M) = 0$ for a given ample line bundle $M$ 
on $Z$. This readily yields the assertions (i), (ii) and (iii).

(iv)  Let $\cK$ be a minimal family on $X$, and $x  \in X$ such 
that $\cU_x$ is non-empty and projective. Choose 
$f \in \Hom_{\bir}(\bP^1, X; 0 \mapsto x)$ with image $C$
such that $C$ is also the image of a curve in $\cU_x$. Write 
$x = (y,z)$ and $f = (g,h)$, where $g \in \Hom(\bP^1, Y; 0 \mapsto y)$ 
and $h \in \Hom(\bP^1, Z; 0 \mapsto z)$. We may view $f$ as the
composition 
\[ \bP^1 \stackrel{\delta}{\longrightarrow} \bP^1 \times \bP^1 
\stackrel{g \times h}{\longrightarrow} Y \times Z, \]
where $\delta$ denotes the diagonal embedding. 
If both $g$ and $h$ are non-constant, then $g \times h$ is
finite. As the image of $\delta$ degenerates in $\bP^1 \times \bP^1$
to a reducible curve through $(0,0)$, namely
$(\bP^1 \times \{ 0 \}) \cup (\{ 0 \} \times \bP^1)$, this yields 
a degeneration of $C$ in $X$ to a reducible curve through $x$. 
But this contradicts the minimality of $\cK$. Thus, we may assume 
that $g$ is constant; then (iii) implies that $\cK$ is the pull-back 
of a minimal family on $Y$. 
\end{proof}

\subsection{Almost homogeneous varieties}
\label{subsec:almost}

We now assume that the projective variety $X$ is equipped with 
an action of a connected algebraic group $G$. Then $G$ acts 
on $\Hom(\bP^1,X)$ via its action on $X$, which commutes
with the action of $\Aut(\bP^1)$ and stabilizes the open 
subscheme $\Hom_{\bir}(\bP^1,X)$. This yields actions of $G$
on $\Hom_{\bir}^{\n}(\bP^1,X)$, $\RatCurves(X)$, $\Univ(X)$
such that the diagram (\ref{eqn:hom}) is equivariant.
Also, $G$ acts on the Chow scheme and the diagram
(\ref{eqn:chow}) is equivariant as well. Thus, so is
the morphism (\ref{eqn:mu}). Since $G$ is connected,
every family of rational curves on $X$ is stable by $G$.

Next, assume that there exists a point $x \in X$ such 
that the orbit $X^0 = G \cdot x$ is open in $X$, i.e.,
$X$ is almost homogeneous under $G$. Then there exist
covering families of rational curves on $X$, since 
$G$ is a rational variety. We assume in addition that the
(scheme-theoretic) stabilizer $H = G_x$ is smooth and
connected.

Consider a family $\cK$ of rational curves on $X$. 
Let $\cU^0 := \mu^{-1}(X^0)$; 
this is an open $G$-stable subset of $\cU$. 
Since $\rho: \cU \to \cK$ is flat, $\rho(\cU^0) =: \cK^0$ 
is a $G$-stable open subset of $\RatCurves(X)$; it consists 
of those curves in $\cK$ that meet $X^0$. We also have
a smaller $G$-stable open subset $\cK(X^0)$,
consisting of those curves that are contained in $X^0$. 
This yields a commutative diagram of $G$-varieties
\begin{equation}\label{eqn:open} 
\xymatrix{
\cU(X^0) \ar[r] \ar[d] & \cU^0 \ar[r] \ar[d] &
\cU \ar[d]^{\rho} \\
\cK(X^0) \ar[r] & \cK^0 \ar[r] & \cK, \\
} \end{equation}
where the horizontal arrows are open immersions,
the left and right vertical arrows are $\bP^1$-bundles,
and the middle vertical arrow is smooth.

\begin{lemma}\label{lem:covering}
With the preceding notation and assumptions, $\cK$
is covering if and only if $\cU_x$ is non-empty; equivalently,
$\cK_x$ is non-empty. Under these assumptions, $\cU_x$ 
is a normal variety and $\cK_x$ is a variety.
\end{lemma}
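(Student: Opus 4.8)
The plan is to deduce everything from the $G$-equivariance of $\mu : \cU \to X$, together with the associated fibre bundle structure of $\cU$ over the open orbit $X^0 \simeq G/H$.

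First I would settle the equivalences. Since $\cK$ is $G$-stable and $\mu$ is $G$-equivariant, the image $\mu(\cU)$ is a $G$-stable constructible subset of $X$. By definition $\cK$ is covering precisely when $\mu(\cU)$ is dense; as a dense constructible set meets the dense orbit $X^0$ and $\mu(\cU)$ is $G$-stable, density is equivalent to $X^0 \subseteq \mu(\cU)$, hence to $x \in \mu(\cU)$, i.e.\ to $\cU_x = \mu^{-1}(x)$ being non-empty. Conversely $X^0 \subseteq \mu(\cU)$ forces density, since $X^0$ is dense. Finally $\cU_x \neq \emptyset$ if and only if $\cK_x = \rho(\cU_x) \neq \emptyset$, which is immediate.

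Assume now $\cU_x \neq \emptyset$. I would first record that $\cU$, being a component of the normal scheme $\Univ(X)$, is an irreducible normal variety; hence so is the non-empty open subset $\cU^0 = \mu^{-1}(X^0)$, which contains $\cU_x$. The restriction $\mu : \cU^0 \to X^0$ is a $G$-equivariant morphism to $G/H$ with fibre $\cU_x$ over the base point, so the observation recalled in the introduction gives a $G$-equivariant isomorphism $\cU^0 \simeq G \times^H \cU_x$. I then pass to the principal $H$-bundle $q : G \times \cU_x \to \cU^0$. Since $H$ is smooth, $q$ is smooth and surjective, so $G \times \cU_x$ is normal (normality ascends along a smooth morphism over a normal base); and since the fibres of $q$ are isomorphic to the connected group $H$, the open surjection $q$ over the connected base $\cU^0$ forces $G \times \cU_x$ to be connected, hence irreducible, being normal. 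It remains to descend these properties along the faithfully flat projection $\pr_2 : G \times \cU_x \to \cU_x$ (faithfully flat because $G \to \Spec(k)$ is). Normality descends along faithfully flat quasi-compact morphisms, yielding $\cU_x$ normal; and $\cU_x$, as the image of the irreducible scheme $G \times \cU_x$, is irreducible. Thus $\cU_x$ is a normal variety. For $\cK_x$, I would invoke that $\rho_x : \cU_x \to \cK_x$ is finite and surjective: the image of the irreducible variety $\cU_x$ is irreducible, so $\cK_x$, endowed with its reduced structure, is a variety.

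The crux of the argument is the normality of $\cU_x$, and the main obstacle is handling normality correctly in positive characteristic. This is exactly where both hypotheses on $H = G_x$ enter: smoothness of $H$ makes the bundle map $q$ smooth, so that normality propagates upward, while connectedness of $H$ secures irreducibility. With these in place, the ascent along $q$ and the descent along $\pr_2$ are formal.
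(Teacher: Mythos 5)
Your proof is correct and takes essentially the same route as the paper: both identify $\cU^0$ with $G \times^H \cU_x$ via the $G$-equivariant fibration over $X^0 \simeq G/H$, and transfer normality and irreducibility to $\cU_x$ through the cartesian square with principal $H$-bundles, using exactly the smoothness and connectedness of $H$ where you do. The only difference is one of detail: the paper compresses the ascent along $q : G \times \cU_x \to \cU^0$ and the faithfully flat descent along $\pr_2$ into a single sentence, which you spell out (likewise for the nonemptiness equivalences and the finiteness of $\rho_x$ giving that $\cK_x$ is a variety, which the paper leaves implicit).
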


\begin{proof}
The morphism $\mu$ restricts to a $G$-equivariant morphism 
$\mu^0: \cU^0 \longrightarrow X^0$ 
with fiber at $x$ being $\cU_x$. By identifying $X^0$ with 
the homogeneous space $G/H$, this yields a $G$-equivariant 
isomorphism $\cU^0 \simeq G \times^H \cU_x$,
and in turn a cartesian square
\[ \xymatrix{
G \times \cU_x \ar[r] \ar[d] & G \ar[d] \\
\cU^0 \ar[r] & G/H, \\
} \]
where the vertical arrows are principal $H$-bundles. 
Since $\cU^0$ is a normal variety and $H$ is smooth and
connected, it follows that $\cU_x$ is a normal variety as well.
Moreover, $\cU_x$ is non-empty if and only if so is $\cU^0$, 
or equivalently $\cK^0$; this completes the proof.
\end{proof}

\begin{lemma}\label{lem:smooth}
Let $\cK$ be a family of rational curves on $X$, containing 
a curve $C$ which consists of smooth points and meets $X^0$.

\begin{enumerate}

\item[{\rm (i)}]  $C$ is free.

\item[{\rm (ii)}] $\cK$ is covering and $\cU_x$ is a normal variety.

\item[{\rm (iii)}] $\cU_x$ is smooth at $C$, of dimension 
$- K_{X_{\sm}} \cdot C - 2$.

\item[{\rm (iv)}]  $\cU_x$ is isomorphic to a component of 
$\RatCurves(x,X)$.

\end{enumerate}

\end{lemma}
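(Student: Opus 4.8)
The plan is to use the $G$-action to reduce to a curve through the base point, to establish freeness (i) via the infinitesimal action of $\fg$, and then to deduce (ii)--(iv) from the material of \S\ref{subsec:spaces} together with the normality supplied by Lemma \ref{lem:covering}. As a preliminary reduction, recall that $\cK$ is $G$-stable because $G$ is connected, and that $G$ acts transitively on $X^0 = G\cdot x$. Since $C$ meets $X^0$, some $G$-translate of $C$ passes through $x$; as freeness, the degree $-K_{X_{\sm}}\cdot C$, and membership in $\cK$ are all $G$-invariant, I may assume from the outset that $x \in C$, so in particular $x \in X_{\sm}$.

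For (i), let $f : \bP^1 \to X_{\sm}$ be the normalization map of $C$, which factors through the open subset $X_{\sm}$. Differentiating the $G$-action gives the infinitesimal action $\fg \otimes \cO_{X_{\sm}} \to T_{X_{\sm}}$, and pulling back along $f$ yields a morphism of vector bundles $a : \fg \otimes \cO_{\bP^1} \to f^*(T_{X_{\sm}})$. For every $y \in X^0$ the differential $\fg \to T_y X$ of the orbit map is surjective, the stabilizer $G_y$ (a conjugate of $H$) being smooth of the expected dimension; hence $a$ is surjective at each point of $f^{-1}(X^0)$, a non-empty open subset of $\bP^1$. Writing $f^*(T_{X_{\sm}}) = \bigoplus_i \cO_{\bP^1}(a_i)$ by Grothendieck's theorem, I claim that every $a_i \geq 0$. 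Indeed, if some $a_i < 0$, then $\Hom(\cO_{\bP^1}, \cO_{\bP^1}(a_i)) = H^0(\bP^1, \cO_{\bP^1}(a_i)) = 0$, so the composition of $a$ with the projection onto $\cO_{\bP^1}(a_i)$ vanishes; the image of $a$ would then avoid that summand, making $a$ nowhere surjective, a contradiction. Thus $f^*(T_{X_{\sm}})$ is globally generated, i.e. $C$ is free. This is where almost homogeneity enters, replacing the generic-smoothness arguments that are available only in characteristic zero.

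Assertions (ii) and (iii) then follow quickly. Since $C$ passes through $x$, it defines a point of $\cU_x = \mu^{-1}(x)$, so $\cU_x$ is non-empty; by Lemma \ref{lem:covering}, $\cK$ is covering and $\cU_x$ is a normal variety, which is (ii). For (iii), $C$ is free with $f(0) = x$, so by the discussion of \S\ref{subsec:spaces} the point $C$ lies in the common smooth open subscheme $\RatCurves_{\free}(x,X)$ of $\cU_x$; hence $\cU_x$ is smooth at $C$, of dimension $-K_{X_{\sm}}\cdot C - 2$.

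The delicate part is (iv), since the text warns that in general the relation between $\cU_x = \mu^{-1}(x)$ and $\RatCurves(x,X)$ is unclear: the two are built respectively from $\eta^{-1}(\Hom_{\bir}(\bP^1,X;0\mapsto x))$ and from $\Hom_{\bir}^{\n}(\bP^1,X;0\mapsto x)$, which differ by the order of normalizing and imposing $0\mapsto x$. I would construct a natural comparison morphism: by the universal property of normalization, the map $\Hom_{\bir}^{\n}(\bP^1,X;0\mapsto x) \to \Hom_{\bir}(\bP^1,X)$ lifts through $\eta$ into $\eta^{-1}(\Hom_{\bir}(\bP^1,X;0\mapsto x))$, and this lift is finite (it is a morphism with finite source over the base $\Hom_{\bir}(\bP^1,X;0\mapsto x)$) and is an isomorphism over the normal locus. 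Passing to the quotients by $\Aut(\bP^1,0)$ gives a finite morphism $\RatCurves(x,X) \to \cU_x$ restricting to the identity on $\RatCurves_{\free}(x,X)$. Restricting to the component $\cV$ of $\RatCurves(x,X)$ that contains the free curves of $\cK$ through $x$, I obtain a finite birational morphism $\cV \to \cU_x$; since $\cU_x$ is normal by (ii), this morphism is an isomorphism, proving (iv). The main obstacle is precisely making this comparison morphism rigorous and verifying its finiteness and birationality — normality of $\cU_x$ is then what forces it to be an isomorphism.
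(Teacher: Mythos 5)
Your reduction to the case $x \in C$ and your proofs of (i)--(iii) are correct and essentially the paper's own: freeness is obtained from the generically surjective map $\fg \otimes \cO_{\bP^1} \to f^*(T_{X_{\sm}})$ coming from the infinitesimal action (your computation with the summands $\cO_{\bP^1}(a_i)$ is exactly the step the paper leaves as ``easy''), (ii) is Lemma \ref{lem:covering}, and (iii) is the dimension count for free pointed morphisms recalled in \S\ref{subsec:spaces}.

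The genuine gap is in (iv), at the step you yourself flag as the main obstacle. The morphism $\Hom_{\bir}^{\n}(\bP^1,X; 0 \mapsto x) \to \Hom_{\bir}(\bP^1,X)$ does \emph{not} lift through $\eta$ ``by the universal property of normalization'': that universal property applies only to morphisms from normal schemes which are dominant (sending generic points to generic points), whereas here the image lies in the nowhere dense closed subscheme $\Hom_{\bir}(\bP^1,X; 0 \mapsto x)$. Normalization maps have no lifting property against non-dominant morphisms --- a point mapping to the node of a nodal curve has two lifts, none canonical --- so your comparison morphism is simply not constructed. The argument can be repaired: restrict to the component $Z'$ of $\Hom_{\bir}^{\n}(\bP^1,X;0\mapsto x)$ over the component $Y$ containing $f$; since free morphisms are smooth, hence normal, points of $\Hom_{\bir}(\bP^1,X)$ and are dense in $Y$, the lift of $Z' \to Y$ to $\eta^{-1}(Y)$ is defined on a dense open of $Z'$ where $\eta$ is an isomorphism, and it extends to all of $Z'$ because $\eta^{-1}(Y)$ is finite over $Y$ and $Z'$ is normal (the pulled-back functions are rational and integral over $\cO_{Z'}$, hence regular). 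With this input, your remaining steps --- finiteness of a morphism between finite $Y$-schemes, passage to $\Aut(\bP^1,0)$-quotients, and the conclusion that a finite birational morphism onto the normal variety $\cU_x$ is an isomorphism --- are sound. The paper sidesteps the extension problem by going in the opposite direction: by (ii) and the bundle (\ref{eqn:bundle}), $\eta^{-1}(Y)$ is a principal $\Aut(\bP^1,0)$-bundle over the normal $\cU_x$, hence normal, and the finite surjection $\eta^{-1}(Y) \to Y$ \emph{is} dominant onto $Y$, so the universal property of the normalization of the pointed Hom scheme legitimately yields a finite morphism $\varphi : \eta^{-1}(Y) \to \Hom_{\bir}^{\n}(\bP^1,X;0\mapsto x)$, an isomorphism on the free locus and hence onto a component, which then descends by equivariance.
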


\begin{proof}
(i) The smooth locus $X_{\sm}$ is $G$-stable and contains the open orbit
$X^0 \simeq G/H$. Thus, the tangent bundle $T_{X_{\sm}}$ is equipped
with a space of global sections: the image of the Lie algebra $\fg$
of $G$. Since $H$ is smooth, this space generates $T_{X^0}$.
As a consequence, the induced map 
$\cO_{\bP^1} \otimes \fg \to f^*(T_{X_{\sm}})$
is generically surjective, where $f: \bP^1 \to C$
denotes the normalization. Using the fact that every vector
bundle on $\bP^1$ is a direct sum of line bundles, it follows
easily that $f^*(T_{X_{\sm}})$ is globally generated. 

(ii) This is a consequence of Lemma \ref{lem:covering}.

(iii) This follows from the properties of free morphisms 
recalled in \S \ref{subsec:spaces}. 

(iv) By (ii) and (\ref{eqn:bundle}), there is a principal 
$\Aut(\bP^1,0)$-bundle $\eta^{-1}(Y) \to \cU_x$ 
for some irreducible component $Y$ of 
$\Hom_{\bir}(\bP^1,X; 0 \mapsto x)$; moreover,
$\eta^{-1}(Y)$ is a normal variety. By the universal
property of the normalization, this yields a finite morphism
\[ \varphi : \eta^{-1}(Y) \to \Hom_{\bir}^{\n}(\bP^1,X; 0 \mapsto x), \]
which restricts to an isomorphism on the open subset of free morphisms. 
Thus, $\varphi$ yields an isomorphism to a component of 
$\Hom_{\bir}^{\n}(\bP^1,X; 0 \mapsto x)$. As $\varphi$ is 
$\Aut(\bP^1,0)$-equivariant, it descends to the desired isomorphism.
\end{proof}

Next, we consider covariance properties of covering families,
building on the observations after Remark \ref{rem:nofree}.
Let $X$ be as above, and $\pi : X \to Y$ a surjective morphism,
where $Y$ is a projective variety. Assume that $Y$ is equipped 
with a $G$-action such that $\pi$ is equivariant (this assumption holds 
if $\pi_* \cO_X = \cO_Y$ in view of Blanchard's lemma, see 
e.g.~\cite[\S 4.2]{BSU}). Let $y:= \pi(x)$ and $Y^0 := Gy$; then 
$Y^0= \pi(X^0)$ is open in $Y$. Finally, let $\cK$ be a covering 
family of rational curves on $X$.

\begin{lemma}\label{lem:rational}
Assume that there exists a curve $C \in \cK^0$ such that
$C \subset X_{\sm}$, $\pi \vert_C$ is birational onto its
image $D$, and $D \subset Y_{\sm}$. Then $D \in \cL$
for a unique covering family $\cL$ of rational curves on $Y$.
Moreover, $\pi$ induces a $G$-equivariant rational map 
$\pi_* : \cK \dasharrow \cL$ 
which is defined at $C$ and such that $\pi_*(C) = D$.
\end{lemma}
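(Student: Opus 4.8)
The plan is to deduce the statement from the general covariance construction described right after Remark \ref{rem:nofree}: its only input is a morphism in the component $Z \subset \Hom_{\bir}(\bP^1,X)$ attached to $\cK$ whose composition with $\pi$ is free. I take $f \colon \bP^1 \to C$ to be the normalization of the given curve $C$. Since $C \in \cK^0$ consists of smooth points and meets $X^0$, Lemma \ref{lem:smooth}(i) shows that $C$ is free, so $f$ is a genuine free morphism lying in the free locus of $Z$. Thus the entire proof reduces to checking that $g := \pi \circ f$ is free; once this is known, the construction after Remark \ref{rem:nofree} produces the family $\cL$ on $Y$ together with the rational map $\pi_* \colon \cK \dasharrow \cL$, and it remains only to verify the finer assertions.

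For the freeness of $g$: because $\pi\vert_C$ is birational onto $D$ and $f$ is the normalization of $C$, the composite $g$ is birational onto $D$; hence $g \in \Hom_{\bir}(\bP^1,Y)$, and it is in fact the normalization map of $D$. Now $D \subset Y_{\sm}$ consists of smooth points of $Y$, and $D$ meets $Y^0$ since $C$ meets $X^0$ and $\pi(X^0) = Y^0$. Let $\cL$ be the family on $Y$ attached to the irreducible component $W$ of $\Hom_{\bir}(\bP^1,Y)$ containing $g$; then $D \in \cL$. Applying Lemma \ref{lem:smooth}(i) to $Y$, viewed as an almost homogeneous $G$-variety with smooth isotropy group $G_y$ (equivalently, with $\pi$ smooth along the open orbit $X^0$), and to the curve $D \in \cL$, I conclude that $D$ is free. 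Concretely, this amounts to surjectivity of the bundle map $f^*(d\pi)\colon f^*(T_{X_{\sm}}) \to g^*(T_{Y_{\sm}})$ on $\bP^1$, through which the global generation of $f^*(T_{X_{\sm}})$ passes to $g^*(T_{Y_{\sm}})$.

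It then remains to assemble the conclusions. Since $g$ is free, the general construction furnishes $\pi_* \colon \cK \dasharrow \cL$, defined on the open locus of free curves in $\cK$ that are sent to free curves in $Y$; as both $C$ and $g$ are free, $\pi_*$ is defined at $C$, with $\pi_*(C) = D$ by construction. The family $\cL$ is unique: being free, $D$ is a smooth point of $\RatCurves(Y)$ (\S\ref{subsec:spaces}), so it lies on a single irreducible component. Moreover $\cL$ is covering by Lemma \ref{lem:smooth}(ii), applied to the curve $D \subset Y_{\sm}$ meeting $Y^0$. Finally, $\pi_*$ is $G$-equivariant because the morphism $\Hom(\bP^1,X) \to \Hom(\bP^1,Y)$ given by composition with the equivariant map $\pi$ is itself $G$-equivariant and commutes with the $\Aut(\bP^1)$-action, hence descends to a $G$-equivariant rational map; recall that $G$ being connected stabilizes each family.

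The main obstacle is the freeness of $D$. Freeness is exactly the property that can fail in positive characteristic when isotropy groups are non-smooth (Remark \ref{rem:nofree}); here it is secured precisely because $D$ is a curve of smooth points meeting the open orbit of the almost homogeneous variety $Y$, which lets me invoke Lemma \ref{lem:smooth}(i) on $Y$ rather than argue freeness of $\pi \circ f$ by hand. Everything else is a formal consequence of the covariance construction already set up before the lemma.
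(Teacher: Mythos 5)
Your argument is correct and takes essentially the same route as the paper: the paper's proof likewise obtains freeness from Lemma \ref{lem:smooth} (for $C$ on $X$ and, implicitly, for $D$ on $Y$) and then cites the covariance construction at the end of \S\ref{subsec:families}, checking equivariance directly, so you have simply written out the details the paper leaves to the reader (birationality of $g = \pi \circ f$ onto $D$, uniqueness of $\cL$ from $D$ being a smooth point of $\RatCurves(Y)$, covering via Lemma \ref{lem:smooth}(ii)). Two small caveats: the smoothness of $G_y$ that you invoke parenthetically is indeed needed to apply Lemma \ref{lem:smooth}(i) on $Y$ in positive characteristic, but this implicit hypothesis is shared by the paper (it is made explicit only in Lemma \ref{lem:contraction} and holds in all the paper's applications); and your side remark asserting surjectivity of $f^*(d\pi) \colon f^*(T_{X_{\sm}}) \to g^*(T_{Y_{\sm}})$ on all of $\bP^1$ is too strong, since $\pi$ need not be smooth along all of $C$ --- what holds and suffices is generic surjectivity of $\cO_{\bP^1} \otimes \fg \to g^*(T_{Y_{\sm}})$, combined with the splitting of vector bundles on $\bP^1$ exactly as in the proof of Lemma \ref{lem:smooth}(i).
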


\begin{proof}
The assumptions make sense, since every rational curve on $X$
meeting $X^0$ and contained in $X_{\sm}$ is free 
(Lemma \ref{lem:smooth}). The statement follows readily
from the discussion at the end of \S \ref{subsec:families}, with the
exception of the equivariance of $\pi_*$ which is easily checked.
\end{proof}

\begin{remark}\label{rem:rational}
Under the assumptions of the above lemma, $\pi_*$
restricts to an $H$-equivariant rational map 
$\pi_* = \pi_{*,x} : \cK_x \dasharrow \cL_y$.
(Indeed, replacing $C$ with the translate $gC$ for some 
$g \in G$, we may assume that $x \in C$, and hence 
$y \in D$). Moreover, we have a commutative diagram 
of rational maps
\begin{equation}\label{eqn:tangentbis} 
\xymatrix{
\cK_x \ar@{-->}[r] \ar@{-->}[d]_{\pi_*} & \bP(T_x X) 
\ar@{-->}[d]^{d\pi_x} \\
\cL_y \ar@{-->}[r] & \bP(T_y Y), \\
} \end{equation}
where the horizontal arrows arise from the tangent maps
(\ref{eqn:tangent}).

The assumptions of Lemma \ref{lem:rational} hold 
if $\pi$ is birational and $\cK(X^0)$ is non-empty; 
then $\pi_*$ restricts to an isomorphism 
$\cK(X^0) \to \cL(X^0)$, and hence is birational.
The assumptions also hold when $\pi$ is birational and 
$X,Y$ are smooth; then $\pi_*$ is an injective morphism. 
\end{remark}

\begin{lemma}\label{lem:contraction}
Let $I := G_y \supset G_x = H$ and $F := \pi^{-1}(y)$; 
assume that $I$ is smooth and connected. 
Then $F$ is a projective variety equipped with an action 
of $I$ and having an open orbit $F^0 = I x \simeq I/H$.

If in addition $\pi$ contracts some curve in $\cK$, then 
$\cK_x = \cL_x$ for a unique covering family of rational
curves $\cL$ on $F$. Moreover, $\cK^0 = G \cL^0$,
and $\cK$ is minimal if and only if so is $\cL$.
\end{lemma}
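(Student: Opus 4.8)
The plan is first to read off the structure of $F$ from the homogeneous fibration over the open orbit. Over $Y^0 = G/I$, the restriction $\pi^{-1}(Y^0) \to Y^0$ is a $G$-equivariant morphism with fiber $F$ at $y$, so by the observation recorded in the Notation section it is $G$-equivariantly isomorphic to $G \times^I F$. Since $Y^0$ is open dense in $Y$ and $\pi$ is surjective, $\pi^{-1}(Y^0)$ is open dense in the variety $X$, hence a variety. As the quotient map $q : G \times F \to G \times^I F$ is smooth and surjective with irreducible fibers $\simeq I$ (here $I$ is connected), and $G \times^I F \simeq \pi^{-1}(Y^0)$ is an irreducible reduced scheme, it follows that $G \times F$, and therefore $F$, is a variety. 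A dimension count $\dim F = \dim X - \dim(G/I) = \dim(I/H)$ together with $H = I_x \subseteq I$ then shows that $Ix \simeq I/H$ is a locally closed subset of full dimension in the irreducible $F$, hence open. Thus $F$ is almost homogeneous under $I$ with open orbit $F^0 = Ix \simeq I/H$ and smooth connected stabilizer $H$, so the entire framework of this subsection applies verbatim to the triple $(F, I, x)$.

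\textbf{Part B (defining $\cL$).}
Assume now that $\pi$ contracts some curve in $\cK$. By the discussion following Remark \ref{rem:nofree}, $\pi$ then contracts \emph{every} curve in $\cK$. Consequently any $C \in \cK_x$ satisfies $\pi(C) = \{\pi(x)\} = \{y\}$, that is, $C \subseteq F$; translating by $G$, every curve in $\cK^0$ lies in a single fiber $\pi^{-1}(gy) = gF$. The closed immersion $F \hookrightarrow X$ induces a natural morphism $\iota : \RatCurves(F) \to \RatCurves(X)$ (built exactly as in the diagrams (\ref{eqn:hom})--(\ref{eqn:chow}) applied to the closed subvariety $F$), which is finite onto its image, injective on points with image the locus of curves contained in $F$, and compatible with the universal families. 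Since $\cK_x$ is an irreducible variety (Lemma \ref{lem:covering}) all of whose points lie in $F$, it factors through $\iota$ and lands in $\iota(\cL)$ for a unique component $\cL$ of $\RatCurves(F)$; because $\iota(\cL)$ is irreducible and meets the component $\cK$ along the nonempty $\cK_x$, we conclude $\iota(\cL) \subseteq \cK$.

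\textbf{Part C (the identities).}
With $\cL$ fixed, I would verify the asserted equalities at the level of curves and promote them to schemes via $\iota$ and Lemma \ref{lem:covering}. A curve lies in $\cL_x$ iff it is a curve of $\cL$ through $x$; since $\iota(\cL) \subseteq \cK$ and such a curve lies in $F$, it is then a curve of $\cK$ through $x$, hence lies in $\cK_x$. Conversely each curve of $\cK_x$ lies in $F$ and passes through $x$, so lies in $\cL_x$. This gives $\cK_x = \cL_x$; in particular $\cL_x \neq \emptyset$, so $\cL$ is covering on $F$ by Lemma \ref{lem:covering}. Uniqueness of $\cL$ is then immediate, as the components of $\RatCurves(F)$ are pairwise disjoint and $\cL$ is pinned down by the nonempty $\cL_x = \cK_x$. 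For $\cK^0 = G \cL^0$: any curve of $\cL$ meeting $F^0 \subseteq X^0$ lies in $\cK$ and meets $X^0$, so $\cL^0 \subseteq \cK^0$ and $G\cL^0 \subseteq \cK^0$; conversely a curve $C \in \cK^0$ meets $X^0$ at some $gx$, and then $g^{-1}C \in \cK_x = \cL_x \subseteq \cL^0$, whence $C \in G\cL^0$. Finally, $\iota$ identifies $\cU_x$ with its analogue $\cV_x$ for $(F,I)$ (the preimages of $x$ under the normalization of a common curve agree whether computed in $F$ or in $X$), so $\cU_x$ is projective iff $\cV_x$ is; since over the homogeneous open orbits all general fibers of $\mu$, resp.\ of its $F$-analogue, are mutually isomorphic, this says precisely that $\cK$ is minimal iff $\cL$ is.

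\textbf{Main obstacle.}
The delicate point is the scheme-level justification of $\iota$: that $\RatCurves(F) \to \RatCurves(X)$ is finite and injective with the expected image, and that it identifies the universal families, hence $\cU_x$ with $\cV_x$. The temptation would be to route this through the smooth loci as in Lemma \ref{lem:em}, but the curves in $\cK_x$ meet $X^0 \subseteq X_{\sm}$ only at $x$ and need not lie entirely in $X_{\sm}$, so they need not be free; the functoriality of $\RatCurves$ and $\Univ$ under the closed immersion $F \hookrightarrow X$ must therefore be established directly from the constructions of \S\ref{subsec:spaces}. Everything else is bookkeeping with the open-orbit fibration of Part A and the already-established constancy of the contraction degree.
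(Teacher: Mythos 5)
Your argument is essentially the paper's own: Part A reproduces its first paragraph (the identification $\pi^{-1}(Y^0) \simeq G \times^I F$ and descent along the principal $I$-bundle, ``arguing as in the proof of Lemma \ref{lem:covering}'', which gives that $F$ is a projective variety with open orbit $F^0 = Ix \simeq I/H$), while Parts B--C spell out what the paper compresses into ``every curve in $\cK_x$ is a rational curve on $F$; the second assertion follows readily from this'', starting from the same constancy-of-contraction observation at the end of \S\ref{subsec:families}. The functoriality of $\RatCurves$ and $\Univ$ under the closed immersion $F \hookrightarrow X$ that you flag as the main obstacle --- genuinely delicate, since $\RatCurves$ is built from a normalization and normalization is not functorial for closed immersions, so your global morphism $\iota$ should really be constructed only on the relevant components (e.g.\ via the restricted universal family over the reduced, normal $\cU_x$) --- is precisely the point the paper's ``follows readily'' glosses over, so your write-up is if anything more explicit than the original.
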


\begin{proof}
Note that $\pi^{-1}(Y^0)$ is a $G$-stable open subvariety
of $X$; in particular, it contains the open orbit $X^0$.
Moreover, $\pi$ restricts to a $G$-equivariant morphism 
$\pi^{-1}(Y^0) \to Y^0 \simeq G/I$ 
with fiber $F$ at $y$. This yields a $G$-equivariant
isomorphism $\pi^{-1}(Y^0) \simeq G \times ^I F$,
and in turn the first assertion by arguing as in the proof
of Lemma \ref{lem:covering}.

If $\pi$ contracts some curve in $\cK$, then it
contracts all curves in $\cK$, as seen at the end
of \S \ref{subsec:families}. Thus, every curve in 
$\cK_x$ is a rational curve on $F$. The second
assertion follows readily from this.
\end{proof}

\section{Minimal rational curves on flag varieties}
\label{sec:flag}

\subsection{Flag varieties}
\label{subsec:flag}

Let $X$ be a projective variety, homogeneous under
the action of a connected linear algebraic group $G$.
Choose $x \in X$ and assume that the stabilizer
$G_x$ is smooth. Then $G_x$ is a parabolic subgroup 
of $G$, and hence is connected. Moreover, replacing
$G$ with its largest semi-simple quotient and then with
its simply-connected cover, we may and will assume that
$G$ is semi-simple and simply-connected. We identify 
$X$ with the homogeneous space $G/P$, where $P := G_x$.
The Lie algebras of $G, G_x, P, \ldots$ will be denoted by
$\fg,\fg_x,\fp, \ldots$

Choose a Borel subgroup $B \subset P$ and
a maximal torus $T \subset B$. Let $R$ denote
the root system of $(G,T)$, and $R^+$ the subset
of roots of $(B,T)$; then $R^+$ is a set of positive
roots of $R$. Denote by $R^-$ the corresponding
set of negative roots, and by 
$S = \{ \alpha_1, \ldots, \alpha_n \} \subset R^+$
the set of simple roots. The Weyl group
$W = N_G(T)/T$ is generated by the associated
simple reflections $s_1, \ldots,s_n$. For any 
$w \in W$, we denote by $\dot w \in N_G(T)$ a 
representative. Also, for any $\beta \in R$, we denote
by $U_{\beta} \subset G$ the corresponding root 
subgroup. Let $G_{\beta} \subset G$ denote the
subgroup generated by $U_{\beta}$ and $U_{- \beta}$;
then $G_{\beta}$ is a closed semi-simple subgroup 
of rank $1$, normalized by $T$. For any $w \in W$, 
the conjugation by $\dot w$ sends $U_{\beta}$ to 
$U_{w(\beta)}$, and $G_{\beta}$ to $G_{w(\beta)}$.

We also have the coroot system $R^{\vee}$ with simple 
roots $\alpha_1^{\vee}, \ldots,\alpha_n^{\vee}$; these
form a basis of the cocharacter lattice $X_*(T)$.
The dual basis of the character lattice $X^*(T)$
consists of the fundamental weights
$\varpi_1,\ldots, \varpi_n$. More intrinsically,
for any simple root $\alpha$, we will denote by
$\varpi_{\alpha}$ the fundamental weight with
value $1$ at $\alpha^{\vee}$, and $0$ at all
other simple coroots. Let 
$\rho := \varpi_1 + \cdots + \varpi_n$; then 
$\rho = \frac{1}{2} \sum_{\alpha \in R^+} \alpha$.
The height of any $\beta \in R^{\vee}$ is 
$\he(\beta) := \langle \rho, \beta \rangle$;
this equals the sum of the coordinates of $\beta$
in the basis of simple coroots.

Consider the Levi decomposition $P = R_u(P) L$,
where $L$ is a connected reductive subgroup of 
$G$ containing $T$; then $B_L := B \cap L$
is a Borel subgroup of $L$. Denote by 
$R_L \subset R$ the root system of $(L,T)$,
with subset of positive roots $R^+_L = R_L \cap R^+$ 
and subset of simple roots $I := R_L \cap S$. Then $P$ is
generated by $B$ and the $\dot s_{\alpha}$, where
$\alpha \in I$; we write $P = P_I$ and $L = L_I$.

The character group $X^*(P)$ is identified via restriction
to the subgroup of $X^*(T)$ with basis the 
$\varpi_{\alpha}$, where $\alpha \in S \setminus I$. 
Also, every $\lambda \in X^*(P)$ defines the associated line
bundle $\cL_{G/P}(\lambda)$ on $G/P$; moreover,
$\cL_{G/P}(\lambda)$ is ample if and only if $\lambda$ 
has positive coordinates in the above basis. 
The assignement
$\lambda \mapsto \cL_{G/P}(\lambda)$ yields an
isomorphism $X^*(P) \simeq \Pic(G/P)$. In particular,
$G/P$ has a smallest ample line bundle, namely
$\cL_{G/P}(\varpi)$, where 
$\varpi = \varpi_I := 
\sum_{\alpha \in S \setminus I} \varpi_{\alpha}$.
Every ample line bundle on $G/P$ is very ample,
and hence defines a projective embedding
$G/P \hookrightarrow \bP(V(\lambda))$,
where $V(\lambda) := H^0(G/P,\cL_{G/P}(\lambda))^*$,
and $\bP(V)$ denotes the projective space
of lines in a vector space $V$. This embedding is
equivariant for the natural action of $G$ on $G/P$,
and its linear representation in $V(\lambda)$
(a highest weight module, see e.g. \cite[II.2.13]{Jantzen}). 
In particular, we have a ``smallest'' projective embedding
\begin{equation}\label{eqn:projective} 
G/P \hookrightarrow \bP(V(\varpi)). 
\end{equation}
Also, recall that the canonical class of $X$ satisfies 
\begin{equation}\label{eqn:canonical} 
\cO(K_X) \simeq \cL_{G/P}(- 2 (\rho - \rho_I)).
\end{equation}

The parabolic subgroup $P$ is maximal if and only
if $I$ is the complement of a unique simple root
$\alpha$. We then write $P = P^{\alpha}$.
More generally, we will use the notation 
$P^{S \setminus I}$ for $P_I$ whenever this is convenient.

\subsection{Schubert varieties}
\label{subsec:schubert}

We keep the notation and assumptions of the previous
subsection. The Weyl group $W_L = N_L(T)/T$ 
is generated by the simple reflections $s_{\alpha}$, where 
$\alpha \in I$; we also denote this group by $W_I$. 
Let $W^I$ denote the subset of $W$ consisting of those
$w$ such that $w(\alpha) \in R^+$ for all $\alpha \in I$;
equivalently, $R^+_I \subset w^{-1}(R^+)$.
Then $W^I$ is a set of representatives of the coset space
$W/W_I$, consisting of the elements of minimal length in 
their right coset (for the length function $\ell$ on $W$
relative to the generators $s_1, \ldots, s_n$).
Note that $w \in W^I$ has length $1$ if and only if 
$w = s_{\alpha}$ for some $\alpha \in S \setminus I$.
On the other hand, the unique element of maximal
length in $W^I$ is $w_0 w_{0,I}$, where $w_0$ 
(resp.~$w_{0,I}$) denotes the longest element of $W$ 
(resp.~$W_I$).

For any $w \in W$, the point ${\dot w} x \in G/P$ is
independent of the choice of the representative $\dot w$;
we thus denote this point by $wx$. Recall that the $wx$,
where $w \in W^I$, are exactly the $T$-fixed points in $G/P$;
moreover, $G/P$ is the disjoint union of the $B$-orbits 
$B wx$. The stabilizer $B_{wx}$ is generated
by $T$ and the root subgroups $U_{\beta}$, where 
$\beta \in R^+ \cap  w(R^+)$; in particular, 
$B_{wx}$ is smooth and connected. 
The closure of $Bwx$ in $G/P$ is the Schubert variety 
$X(w)$; we have $\dim(X(w)) = \dim(Bwx) = \ell(w)$. 
In particular, the Schubert varieties of dimension $1$ are exactly 
the $X(s_{\alpha})$, where $\alpha \in S \setminus I$. Note that 
$X(s_{\alpha}) = P_{\alpha} x = L_{\alpha} x = G_{\alpha} x
= \overline{U_{-\alpha} x}$ 
is a $T$-stable curve in $G/P$ with fixed points $x, s_{\alpha}x$. 

We now collect some basic properties of $T$-stable curves,
which are essentially known (see \cite[\S 3, \S 4]{FW}); we will
provide proofs for completeness.

\begin{lemma}\label{lem:curves}

\begin{enumerate}

\item[{\rm (i)}] The $T$-stable curves in $G/P$ through 
the $T$-fixed point $wx$ are exactly the curves
\[ C_{w,\beta} := G_{\beta} wx = \overline{U_{-\beta}wx}, \]
where $\beta \in w(R^+ \setminus R^+_I)$
(so that $X(s_{\alpha}) = C_{1,\alpha}$ for any 
$\alpha \in S \setminus I$).

\item[{\rm (ii)}] The $T$-fixed points in $C_{w,\beta}$
are exactly $wx$ and $s_{\beta}wx$. 

\item[{\rm (iii)}] $C_{w,\beta} \simeq \bP^1$.

\item[{\rm (iv)}] For any $\lambda \in X^*(P)$, we have
\[ \cL_{G/P}(\lambda) \cdot C_{w,\beta} = 
\langle \lambda, w^{-1}(\beta^{\vee}) \rangle. \]

\item[{\rm (v)}] We have
\[ -K_{G/P} \cdot C_{w,\beta} = 
2 \langle \rho - \rho_I, w^{-1}(\beta^{\vee}) \rangle  =
\he(w^{-1}(\beta^{\vee})) + \he(w_{0,I}w^{-1}(\beta^{\vee})). \]
In particular, 
$-K_{G/P} \cdot X(s_{\alpha}) = 
\he(w_{0,I}(\alpha^{\vee})) + 1$.

\end{enumerate}

\end{lemma}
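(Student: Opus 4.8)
I would prove the five parts of Lemma~\ref{lem:curves} in order, since later parts build on the explicit description of $C_{w,\beta}$ obtained in (i). The unifying principle is that every statement about the $T$-stable curve through $wx$ can be reduced, by translating with $\dot w$, to the corresponding statement about a $T$-stable curve through the base point $x$; the conjugation rules $\dot w U_\beta \dot w^{-1} = U_{w(\beta)}$ and $\dot w G_\beta \dot w^{-1} = G_{w(\beta)}$ recalled in \S\ref{subsec:flag} make this translation exact. So the plan is to analyze the curves through $x$ first, then transport.

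\textbf{Parts (i)--(iii).} First I would classify the $T$-stable curves through $x$. A $T$-stable curve $C$ through $x$ has a tangent space $T_x C \subset T_x(G/P)$ that is $T$-stable and one-dimensional, hence a single weight space. The $T$-weights on $T_x(G/P) \simeq \fg/\fp$ are exactly the negative roots $-\beta$ with $\beta \in R^+ \setminus R^+_I$ (the roots not in the Levi). For each such $\beta$, the rank-one group $G_\beta$ acts on $G/P$ fixing $x$ only through the parabolic, and $G_\beta x = \overline{U_{-\beta}x}$ is a $T$-stable rational curve with tangent weight $-\beta$; this gives existence and matches the claimed list via $\beta \in w(R^+\setminus R^+_I)$ after translating by $\dot w$. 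For uniqueness I would argue that distinct weights give distinct tangent directions, so $C$ is determined by $\beta$. For (ii), since $G_\beta \simeq \SL_2$ or $\PGL_2$ acts on $C_{w,\beta}$ with $T$ acting through the character $\beta$, the two $T$-fixed points of this $\bP^1$ are $wx$ and its image under the reflection $s_\beta$, namely $s_\beta w x$. Part (iii) is then immediate: $G_\beta$ acts transitively with a Borel stabilizer on a curve with exactly two $T$-fixed points, forcing $C_{w,\beta}\simeq\bP^1$.

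\textbf{Parts (iv)--(v).} For the degree computation (iv), I would pull the line bundle $\cL_{G/P}(\lambda)$ back along the normalization $\bP^1 \to C_{w,\beta}$ and compute its degree via the $T$-action. Translating by $\dot w^{-1}$ replaces $C_{w,\beta}$ by $C_{1,w^{-1}(\beta)} = G_{w^{-1}(\beta)}x$ and $\lambda$ by $\lambda$ (a $P$-character, hence a weight), reducing to an $\SL_2$-computation on $G_{w^{-1}(\beta)}x \simeq \bP^1$: the degree of a line bundle of weight $\lambda$ on this curve is the pairing $\langle \lambda, (w^{-1}\beta)^\vee\rangle = \langle\lambda, w^{-1}(\beta^\vee)\rangle$, using $(w^{-1}\beta)^\vee = w^{-1}(\beta^\vee)$. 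Part (v) follows by specializing (iv): by (\ref{eqn:canonical}) we have $\cO(K_{G/P}) \simeq \cL_{G/P}(-2(\rho-\rho_I))$, so $-K_{G/P}\cdot C_{w,\beta} = 2\langle \rho-\rho_I, w^{-1}(\beta^\vee)\rangle$. The second equality in (v) I would obtain by writing $2\rho_I$ in terms of $w_{0,I}$: since $w_{0,I}$ sends $R^+_I$ to $R^-_I$ and fixes $R^+\setminus R^+_I$ up to the Levi, one has $\rho - \rho_I = \tfrac12(\rho + w_{0,I}\rho)$ in the relevant pairing, which splits $2\langle\rho-\rho_I, \cdot\rangle$ into $\he(\cdot) + \he(w_{0,I}^{-1}(\cdot))$; using $w_{0,I}^{-1}=w_{0,I}$ and $\langle\rho, w_{0,I}\xi\rangle = \langle w_{0,I}\rho,\xi\rangle$ gives the stated sum of heights. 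The final specialization to $X(s_\alpha)=C_{1,\alpha}$ sets $w=1$, $\beta=\alpha$, giving $\he(\alpha^\vee)+\he(w_{0,I}\alpha^\vee) = 1 + \he(w_{0,I}(\alpha^\vee))$ since $\he(\alpha^\vee)=1$ for a simple coroot.

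\textbf{Anticipated obstacle.} The routine parts are the translation arguments and the $\SL_2$ degree computation; the one step demanding genuine care is the reindexing identity in (v) that converts the single pairing $2\langle\rho-\rho_I,\,\cdot\,\rangle$ into the symmetric sum of two heights. This hinges on the precise behavior of $w_{0,I}$ on the coweight lattice and on correctly tracking which element ($w_{0,I}$ versus its inverse) acts where, so I would verify the identity $2(\rho-\rho_I) = \rho + w_{0,I}(\rho)$ as characters restricted to the relevant coroots before pairing, rather than risk a sign or index slip.
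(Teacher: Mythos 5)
Your parts (ii)--(v) are correct and essentially identical to the paper's proof: the reduction to the base point by translating with $\dot w$, the rank-one computation identifying the restricted line bundle with $\cO_{\bP^1}(\langle \lambda, \beta^{\vee}\rangle)$, and the identity $2(\rho-\rho_I) = \rho + w_{0,I}(\rho)$ (equivalently $\rho - w_{0,I}(\rho) = 2\rho_I$, which is exactly what the paper uses) are all the same steps, and you even flag the right place to be careful with $w_{0,I}$. However, your uniqueness argument in (i) has a genuine gap. You classify the $T$-stable curves through $x$ by their tangent directions, arguing that the weights $-\beta$, $\beta \in R^+ \setminus R^+_I$, on $T_x(G/P) \simeq \fg/\fp$ are distinct and each occurs with multiplicity one, ``so $C$ is determined by $\beta$.'' This fails twice over for a general multiplicity-free linear $T$-action. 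First, a $T$-stable curve need not be smooth at the fixed point, so $T_x C$ need not be one-dimensional: in $\bA^2$ with coordinates $(u,v)$ of weights $2\chi, 3\chi$, the cuspidal cubic $v^2 = u^3$ is $T$-stable through the origin with two-dimensional tangent space there. Second, even among smooth curves the tangent direction does not determine the curve: with weights $\chi, 3\chi$ (distinct, multiplicity one), both the axis $v = 0$ and the curve $v = u^3$ are $T$-stable through the origin with the \emph{same} tangent line. So distinctness of weights alone cannot yield uniqueness.

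What actually rescues the statement --- and what the paper's proof is structured around --- is the $T$-equivariant open immersion of the affine space $\prod_{\beta \in R^+ \setminus R^+_I} U_{-\beta}$ onto a neighborhood of $x$ (Bruhat decomposition), combined with the fact that root systems are reduced: no two distinct roots in $R^+ \setminus R^+_I$ are positive rational multiples of one another. In a linear $T$-module, any $T$-stable irreducible curve through the origin has a dense one-dimensional $T$-orbit, and all weights occurring in the coordinates of a point of that orbit must lie on a single line in the character lattice; since no two of the weights $-\beta$ are proportional, the curve is forced to be a coordinate line, i.e.\ the closure of some $U_{-\beta}x$. Your write-up never invokes non-proportionality (only distinctness), and never passes to the local linear model where the classification takes place, so as stated the key uniqueness step would fail. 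Repairing it requires adding precisely this ingredient; everything downstream of (i) in your proposal then goes through as written.
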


\begin{proof}
Using the action of $N_G(T)$ on $G/P$ which yields 
a transitive action of $W$ on $T$-fixed points, we
may reduce to the case where $w = 1$. 

By the Bruhat decomposition, we have a $T$-equivariant
open immersion
\[ \prod_{\beta \in R^+ \setminus R^+_I} U_{- \beta}
\longrightarrow G/P, \quad
(g_{\beta}) \longmapsto (\prod_{\beta} g_{\beta}) x, \]
where the product is taken in any order. Thus, $x$
has an open $T$-stable neighborhood in $G/P$, 
isomorphic to an affine space on which $T$ acts
linearly with weights being the $-\beta$, where 
$\beta \in R^+ \setminus R^+_I$; moreover, 
each such weight has multiplicity $1$. It follows that
the $T$-stable curves in $G/P$ through $x$ are
exactly the closures of the $T$-stable lines in this
neighborhood, i.e., of the orbits $U_{-\beta} x$;
moreover, the orbit maps 
$U_{- \beta} \to U_{- \beta} x$ are isomorphisms.
Since $x$ is fixed by $G_{\beta} \cap B$, a Borel 
subgroup of $G_{\beta}$, we have
\[ C_{1,\beta} = \overline{U_{-\beta} x} = G_{\beta} x 
\simeq G_{\beta}/(B \cap G_{\beta}). \]
This implies (i), (ii) and (iii).

(iv) The restriction to $C_{1,\beta}$ of the $G$-linearized 
line bundle $\cL_{G/P}(\lambda)$ is the $G_{\beta}$-linearized 
line bundle $\cL_{G_{\beta}/(B \cap G_{\beta})}(\lambda)=: \cL$. 
Moreover, $T \cap G_{\beta}$
is a maximal torus of $G_{\beta}$, the image of the coroot
$\beta^{\vee} : \bG_m \to T$; the scheme-theoretic
center of $G_{\beta} \simeq \SL(2)$ is the image of 
the $2$-torsion subgroup scheme $\mu_2 \subset \bG_m$
under $\beta^{\vee}$. 
Also, $\beta^{\vee}$ acts on the fiber of $\cL$ at $x$ 
(resp.~$s_{\beta} x$) via the weight
$\langle \lambda, \beta^{\vee} \rangle$ 
(resp.~$\langle s_{\beta}(\lambda), \beta^{\vee} \rangle
= - \langle \lambda, \beta^{\vee} \rangle$). 
Identifying $C_{1,\beta}$ with $\bP^1$, it follows easily
that $\cL \simeq \cO_{\bP^1}(n)$, where 
$n := \langle \lambda, \beta^{\vee} \rangle$.

(v) The first equality follows directly from (iv) in view of
the isomorphism (\ref{eqn:canonical}). 
As $\rho - w_{0,I}(\rho) = 2 \rho_I$, we obtain
$2(\rho - \rho_I) = \rho + w_{0,I}(\rho)$; this implies
the second equality. 
\end{proof}

By Lemma \ref{lem:curves}, every $1$-dimensional 
Schubert variety $X(s_{\alpha})$ satisfies 
\[ \cL_{G/P}(\varpi) \cdot X(s_{\alpha}) =
\langle \varpi, \alpha^{\vee} \rangle = 1, \]
that is, $X(s_{\alpha})$ is a line in $\bP(V(\varpi))$.
We thus say that $X(s_{\alpha})$ is a 
\emph{Schubert line}. 

Also, note that every $T$-stable curve in $G/P$
is a line if and only if we have
$\langle \varpi, \beta^{\vee} \rangle \leq 1$
for all $\beta \in R^+$, i.e., the dominant weight
$\varpi$ is \emph{minuscule}. Then the parabolic 
subgroup $P$ is also called minuscule; it is maximal 
if $G$ is simple. The projective homogeneous space
$G/P$ is called minuscule as well. Moreover, the weights 
of $T$ in $V(\varpi)$ are exactly the $w(\varpi)$, where 
$w \in W$; as a consequence, the $G$-module $V(\varpi)$
is simple.

\subsection{Lines on flag varieties}
\label{subsec:lines}

We still keep the notation and assumptions of 
\S \ref{subsec:flag}, and consider a minimal family
 $\cK$ on $X = G/P$.

\begin{lemma}\label{lem:lines}

\begin{enumerate}

\item[{\rm (i)}] $\cK$ consists of free curves.

\item[{\rm (ii)}] $\cK_x$ is a smooth projective variety
containing a unique Schubert line $X(s_{\alpha})$,
where $\alpha \in S \setminus I$. Moreover,
$\dim(\cK_x) = \he(w_{0,I}(\alpha^{\vee})) - 1$.

\item[{\rm (iii)}] $\cK_x$ consists of the lines in the orbit
$P_{I \cup \{ \alpha \} } x$ through $x$.

\item[{\rm (iv)}] $\cK$ consists of those lines in $G/P$ 
that are contracted by the natural morphism 
$\pi_{\alpha} : G/P = G/P_I \to  G/P_{I \cup \{ \alpha \} }$.

\end{enumerate}

\end{lemma}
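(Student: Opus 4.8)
The plan is to analyze the minimal family $\cK$ on $X = G/P$ by exploiting that $X$ is homogeneous, so $\cK$ is $G$-stable and the results of \S\ref{subsec:almost} apply with $H = P$ smooth and connected. The central observation is that a minimal family should be governed by $T$-stable curves through $x$, which by Lemma \ref{lem:curves} are exactly the $C_{1,\beta} = G_\beta x$ with $\beta \in R^+ \setminus R^+_I$, and whose anticanonical degrees are computed in Lemma \ref{lem:curves}(v). I would organize the proof around identifying which of these degrees are minimal, and then matching the minimal family to the lines in a sub-flag-variety.

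First I would prove (i): since $X$ is smooth and homogeneous, every curve through $x$ meeting the open orbit (which is all of $X$) is free by Lemma \ref{lem:smooth}(i), so $\cK$ consists of free curves, and moreover $\cK_x$ is smooth of dimension $-K_X \cdot C - 2$ at each point by Lemma \ref{lem:smooth}(iii). Next, because $\cK$ is $T$-stable and $\cK_x$ is a smooth projective $P$-variety (hence has a $T$-fixed point by Borel's fixed point theorem), it must contain a $T$-stable curve through $x$, necessarily some $C_{1,\beta}$. Minimality forces the anticanonical degree to be as small as possible: by Lemma \ref{lem:curves}(v) we have $-K_{G/P} \cdot C_{1,\beta} = \he(\beta^\vee) + \he(w_{0,I}(\beta^\vee))$, and among all $\beta \in R^+ \setminus R^+_I$ I expect this to be minimized exactly at $\beta = \alpha$ a simple root in $S \setminus I$, giving a Schubert line $X(s_\alpha) = C_{1,\alpha}$. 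The dimension formula in (ii), $\dim(\cK_x) = \he(w_{0,I}(\alpha^\vee)) - 1$, then follows from $-K_{G/P} \cdot X(s_\alpha) = \he(w_{0,I}(\alpha^\vee)) + 1$ together with $\dim(\cK_x) = -K_X \cdot C - 2$.

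For (iii) and (iv), I would use the contraction morphism $\pi_\alpha : G/P_I \to G/P_{I \cup \{\alpha\}}$ and Lemma \ref{lem:contraction}. The fiber $F = \pi_\alpha^{-1}(\pi_\alpha(x))$ is itself a flag variety $P_{I \cup \{\alpha\}}/P_I$ for the Levi, with its own open orbit under $I = P_{I \cup \{\alpha\}}$; a line $X(s_\alpha)$ is contracted by $\pi_\alpha$ precisely because $\langle \varpi_{I \cup \{\alpha\}}, \alpha^\vee \rangle = 0$. Since the minimal family $\cK$ contains a line that is contracted by $\pi_\alpha$, Lemma \ref{lem:contraction} identifies $\cK_x$ with $\cL_x$ for a minimal family $\cL$ on $F$; and because $F \simeq P_{I \cup \{\alpha\}}/P_I$ is a flag variety whose minimal family consists of all lines through $x$ (the relative Picard rank being $1$), this yields (iii). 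Claim (iv) follows by reversing the identification: the lines contracted by $\pi_\alpha$ are exactly those in the fibers $F$, and these constitute the single family $\cK$.

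The main obstacle will be establishing the uniqueness in (ii) — that $\cK_x$ contains exactly one Schubert line $X(s_\alpha)$, equivalently that the minimal anticanonical degree is attained at a \emph{unique} simple root in $S \setminus I$. I would argue this by combining two facts: first, that a minimal family is connected and $P$-stable, so its set of $T$-fixed lines through $x$ forms a single $W_I$-orbit structure; and second, that the family being minimal pins down the degree, after which a Schubert line $X(s_\alpha)$ with $\alpha \in S \setminus I$ is forced since any $T$-stable curve of minimal degree through $x$ must be one of the $C_{1,\beta}$ and the degree computation isolates the simple-root case. The delicate point is ruling out that two distinct Schubert lines $X(s_\alpha), X(s_{\alpha'})$ with $\alpha \ne \alpha'$ both lie in the same minimal family; here I would invoke that distinct simple roots in $S \setminus I$ yield distinct contraction morphisms $\pi_\alpha \ne \pi_{\alpha'}$, and a single minimal family contracts under exactly one such morphism by Lemma \ref{lem:contraction} together with the product-type rigidity of Lemma \ref{lem:product}(iv).
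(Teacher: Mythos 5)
Your parts (i), (iii) and (iv) are essentially on the paper's track: (i) is exactly the paper's appeal to Lemma \ref{lem:smooth}, and (iii)--(iv) go through Lemma \ref{lem:contraction} and the fiber $P_{I \cup \{\alpha\}}x$ just as the paper does (the paper fills in your ``relative Picard rank $1$'' gloss by noting that, by (i), the lines in $G/P$ form a disjoint union of minimal families, so $\cK$ is the unique family of lines whose fiber at $x$ contains $X(s_{\alpha})$). The genuine gap is in your argument for (ii). You apply Borel's fixed point theorem only to $T$, which yields some $T$-stable curve $C_{1,\beta}$ with $\beta \in R^+ \setminus R^+_I$ arbitrary, and you then try to force $\beta$ simple by ``minimality forces the anticanonical degree to be as small as possible''. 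This step fails twice. First, in this paper ``minimal family'' means only that $\cK_x$ is non-empty and projective for general $x$; it does not say the degree is minimized among covering families, and distinct minimal families on $G/P$ can indeed have distinct degrees: for $G$ of type $\rA_3$ and $I = \{\alpha_1\}$, the formula $\he(w_{0,I}(\alpha^{\vee})) + 1$ gives degree $3$ for the family attached to $\alpha_2$ and degree $2$ for the one attached to $\alpha_3$, and both families are minimal. So the family containing your $T$-fixed point need not realize any minimum, and the minimizer is in any case not unique (for $P = B$ every simple root gives degree $2$). Second, a minimal family genuinely contains $T$-stable curves $C_{1,\beta}$ with $\beta$ non-simple: by Proposition \ref{prop:long}, $\cL_x = L X(s_{\alpha})$ contains all $C_{1,\beta}$ with $\beta \in W_I \cdot \alpha$, so a $T$-fixed point of $\cK_x$ is not forced to be a Schubert line. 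The paper's fix is cheap and you should adopt it: apply Borel's theorem to the Borel subgroup $B \subset P$ acting on the projective variety $\cK_x$; a $B$-fixed point is a $B$-stable curve through $x$, and by the Bruhat decomposition the only $B$-stable curves through $x$ are the Schubert lines $X(s_{\alpha})$, $\alpha \in S \setminus I$. No degree estimate is needed.

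Your uniqueness argument is also misaimed: Lemma \ref{lem:product}(iv) concerns products $Y \times Z$ and has no purchase on $G/P$, and Lemma \ref{lem:contraction} by itself does not deliver ``exactly one contraction''. The paper's argument uses instead the degree-constancy observation at the end of \S\ref{subsec:families}: since $\pi_{\alpha}$ contracts the single curve $X(s_{\alpha}) \in \cK$, it contracts \emph{every} curve of $\cK$; on the other hand, $\pi_{\alpha}$ restricts to an isomorphism on any other Schubert line $X(s_{\alpha'})$ with $\alpha' \neq \alpha$ (its image has degree $\langle \varpi_{\alpha'}, (\alpha')^{\vee} \rangle = 1 \neq 0$ against the relevant ample class), so $X(s_{\alpha'}) \notin \cK_x$. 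With these two repairs your outline coincides with the paper's proof; in particular the dimension count in (ii), via $\dim(\cK_x) = -K_X \cdot X(s_{\alpha}) - 2$ and Lemma \ref{lem:curves}(v), is correct as you wrote it.
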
 

\begin{proof}
(i) This follows from Lemma \ref{lem:smooth}.

(ii) The scheme $\cK_x$ is projective by assumption.
It is smooth in view of (i) and \S \ref{subsec:families}, 
and irreducible by Lemma \ref{lem:covering}. 
Moreover, $\cK_x$ is equipped with an action of
$P = G_x$. By Borel's fixed point theorem, it follows
that $\cK_x$ contains a $B$-fixed point, and hence
a Schubert line $X(s_{\alpha})$. The assertion on the
dimension follows by combining Lemmas \ref{lem:smooth}
and \ref{lem:curves} (v).

The morphism $\pi_{\alpha}$ contracts the Schubert
line $X(s_{\alpha})$ and sends any other Schubert line
isomorphically to its image. As a consequence, every 
curve in $\cK$ is contracted by $\pi_{\alpha}$, and 
$\cK_x$ contains no other Schubert line.

(iii) and (iv) By (i), the lines in $G/P$ form a disjoint union 
of minimal families. Thus, $\cK$ is the unique family of lines 
such that $\cK_x$ contains $X(s_{\alpha})$. This yields the 
assertions by using Lemma \ref{lem:contraction}.
\end{proof}

With the notation of Lemma \ref{lem:lines}, we have 
\[ P_{I \cup \{ \alpha \} } x \simeq 
P_{I \cup \{ \alpha \} }/P_I \simeq 
L_{I \cup \{ \alpha \} }/(P_I \cap L_{I \cup \{ \alpha \} } ). \]
Moreover, the scheme-theoretic intersection 
$P_I \cap L_{I \cup \{ \alpha \} }$ is smooth (as follows from
\cite[13.21]{Borel}), and hence is a maximal parabolic 
subgroup of the connected reductive group $L_{I \cup \{ \alpha \} }$.
So this lemma reduces the description of minimal families 
on $G/P$ to the case where $P$ is maximal; then there is 
a unique such family, and it consists of the lines in 
$G/P \subset \bP(V(\varpi_{\alpha}))$, where $P = P^{\alpha}$. 
We may further assume that $G$ is simple; if in addition 
the simple root $\alpha$ is long, then we have the following result, 
which is known over the complex numbers (see 
\cite[Prop.~1]{HwM} and \cite[Thm.~4.8]{LM}):

\begin{proposition}\label{prop:long}
Let $P = P^{\alpha}$, where $\alpha$ is a long simple root.
Denote by $\cL$ the family of lines in $G/P$, and by $\cL_x$
the subfamily of lines through $x$. 

\begin{enumerate}
 
\item[{\rm (i)}] $\cL_x$ is the $L$-orbit of the Schubert
line $X(s_{\alpha})$. 

\item[{\rm (ii)}] The tangent map (\ref{eqn:tangent})
yields an immersion of $\cL_x$ into $\bP(T_x X)$.

\item[{\rm (iii)}] The (scheme-theoretic) stabilizer of 
$X(s_{\alpha})$ in $L$ is the parabolic subgroup 
$L \cap P_{\alpha^{\perp}}$, where 
$\alpha^{\perp} := \{ \beta \in S ~\vert~ 
\langle \beta, \alpha^{\vee} \rangle = 0 \}$.
 
\end{enumerate}

\end{proposition}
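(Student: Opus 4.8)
The plan is to establish the three assertions in the order (iii), (i), (ii), since the stabilizer computation feeds the dimension count that yields (i), and (i) in turn makes (ii) a statement about an orbit map. Throughout, note first that $X(s_\alpha)$ is a smooth rational curve in the smooth variety $G/P$, hence an embedded free curve by Lemma \ref{lem:lines}(i); thus the tangent map (\ref{eqn:tangent}) is defined on all of $\cL_x$ and is $L$-equivariant, because $L \subseteq P = G_x$ fixes $x$ and acts linearly on $T_x X = \fg/\fp$. By Lemma \ref{lem:lines}(ii), $\cL_x$ is a smooth, irreducible, projective $L$-variety.

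For (iii), I would argue as follows. Since $X(s_\alpha) = C_{1,\alpha} = G_\alpha x$ is a Schubert variety, it is stable under $B_L = B \cap L$, so its stabilizer $L_{X(s_\alpha)}$ contains $B_L$ and is therefore a parabolic subgroup of $L$, generated by $B_L$ together with those $\dot s_\beta$, $\beta \in I$, that fix $X(s_\alpha)$. Using $\dot s_\beta x = x$ and $\dot s_\beta G_\alpha \dot s_\beta^{-1} = G_{s_\beta(\alpha)}$, one gets $\dot s_\beta(X(s_\alpha)) = C_{1, s_\beta(\alpha)}$, which equals $C_{1,\alpha}$ exactly when $s_\beta(\alpha) = \alpha$, i.e.\ when $\beta \in \alpha^\perp$ (two positive roots determine the same $T$-stable curve only if equal, by Lemma \ref{lem:curves}(i),(ii)). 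Hence $L_{X(s_\alpha)} = L \cap P_{\alpha^\perp}$.

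For (i), set $\cO := L \cdot X(s_\alpha)$. By (iii) this orbit is the image of the orbit map $L/(L \cap P_{\alpha^\perp}) \to \cL_x$; as $L \cap P_{\alpha^\perp}$ is a parabolic of $L$, the source is complete, so $\cO$ is closed in $\cL_x$ with $\dim \cO = \dim L/(L \cap P_{\alpha^\perp})$. On the other hand $\dim \cL_x = \he(w_{0,I}(\alpha^\vee)) - 1$ by Lemma \ref{lem:lines}(ii). The crux is the identity $\he(w_{0,I}(\alpha^\vee)) - 1 = \dim L/(L \cap P_{\alpha^\perp})$, and this is precisely where longness of $\alpha$ enters. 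Writing $(\ ,\ )$ for a Weyl-invariant form and $c_\alpha(\beta)$ for the coefficient of $\alpha$ in $\beta$, Lemma \ref{lem:curves}(iv) with $\lambda = \varpi = \varpi_\alpha$ gives that the $T$-stable lines through $x$ are the $C_{1,\beta}$ with
\[ \langle \varpi_\alpha, \beta^\vee \rangle = c_\alpha(\beta)\,\frac{(\alpha,\alpha)}{(\beta,\beta)} = 1. \]
Since $\alpha$ is long we have $(\beta,\beta) \le (\alpha,\alpha)$, so this forces $c_\alpha(\beta) = 1$ and $\beta$ long; these $\beta$ make up the single orbit $W_I(\alpha) \subseteq R^+$, whence every $T$-fixed point of $\cL_x$ is a curve $C_{1,w(\alpha)} = \dot w\,X(s_\alpha)$ with $w \in W_I$ and so lies in $\cO$. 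A direct root-system calculation along these lines yields the displayed dimension identity, so $\dim \cO = \dim \cL_x$; as $\cO$ is closed and $\cL_x$ irreducible, $\cL_x = \cO = L \cdot X(s_\alpha)$.

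For (ii), now that $\cL_x = L/(L \cap P_{\alpha^\perp})$ is $L$-homogeneous, $\tau$ is an $L$-equivariant morphism from a single orbit, with image the orbit $L \cdot [\fg_{-\alpha}] \subseteq \bP(T_x X)$, where $[\fg_{-\alpha}]$ is the tangent direction at $x$ of $X(s_\alpha) = \overline{U_{-\alpha}x}$. This $[\fg_{-\alpha}]$ is a highest weight line of the $L$-module $\fg/\fp$, and arguing exactly as in (iii) its stabilizer is again $L \cap P_{\alpha^\perp}$. Thus $\tau$ is the orbit map onto $L \cdot [\fg_{-\alpha}]$ sending base point to base point, hence a bijective equivariant morphism, and it is an immersion into $\bP(T_x X)$ once it is separable; by homogeneity this reduces to injectivity of the differential $d\tau$ at $X(s_\alpha)$, a $(T \cap L)$-equivariant map out of $\fl/\Lie(L \cap P_{\alpha^\perp})$ checked weight space by weight space. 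I expect the dimension identity in (i) to be the main obstacle: it is the exact numerical shadow of the hypothesis that $\alpha$ is long (for a short simple root the right-hand side is strictly smaller, reflecting extra lines through $x$ and a larger, inhomogeneous $\cL_x$), and proving it cleanly requires the root computation above; the separability needed in (ii) is a secondary characteristic-$p$ point handled by the weight count.
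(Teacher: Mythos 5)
Your overall route is essentially the paper's (a stabilizer computation feeding a closed-orbit-plus-dimension count for (i), with (ii) read off from equivariance), but there is a genuine gap at the infinitesimal level, and it is exactly the characteristic-$p$ point this paper is careful about. Your argument for (iii) determines only the $k$-points of the stabilizer: $B_L$-stability plus the computation $\dot s_{\beta} X(s_{\alpha}) = C_{1,s_{\beta}(\alpha)}$ identifies the \emph{reduced} stabilizer with $L \cap P_{\alpha^{\perp}}$, but the proposition asserts the \emph{scheme-theoretic} stabilizer, and in characteristic $p$ a subgroup scheme containing a Borel need not be reduced --- this is precisely the phenomenon behind the varieties of unseparated flags in Remark \ref{rem:nofree}, so ``contains $B_L$, hence is a parabolic generated by $B_L$ and certain $\dot s_{\beta}$'' is not available scheme-theoretically. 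The paper closes this by a Lie algebra computation: the stabilizer of the tangent direction $[\fg_{-\alpha}]$ has Lie algebra generated by $\fb \cap \fl$ and those $\fg_{-\beta}$ with $[\fg_{-\beta},\fg_{-\alpha}] = 0$, and by Chevalley's relation $[\fg_{-\beta},\fg_{-\alpha}] = \fg_{-\alpha-\beta}$ (valid in every characteristic, since $\alpha - \beta$ is not a root, so the structure constant is $\pm 1$) this forces $\fl_{[\fg_{-\alpha}]} = \fl \cap \fp_{\alpha^{\perp}}$; smoothness of $L \cap P_{\alpha^{\perp}}$ then yields the scheme-theoretic equalities $L_{X(s_{\alpha})} = L_{[\fg_{-\alpha}]} = L \cap P_{\alpha^{\perp}}$. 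The same omission undermines your (ii): you defer the immersion property to injectivity of $d\tau$ ``checked weight space by weight space,'' but that check \emph{is} the nonvanishing of the brackets $[\fg_{-\beta},\fg_{-\alpha}]$ for $\beta \in R^+_I \setminus R^+_{I \cap \alpha^{\perp}}$, which you never carry out; and ``arguing exactly as in (iii)'' cannot supply it, since your (iii) argument is purely about group elements translating $T$-stable curves. In the paper, the scheme-theoretic equality of the two stabilizers is what makes $\tau$ an immersion, with no separate separability discussion needed.

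A secondary but real omission: you correctly flag the identity $\he(w_{0,I}(\alpha^{\vee})) - 1 = \dim L/(L \cap P_{\alpha^{\perp}})$ as the crux of (i), yet leave it as ``a direct root-system calculation.'' It is two lines in the paper: by Lemma \ref{lem:curves}(v), $\dim(\cL_x) = -K_X \cdot X(s_{\alpha}) - 2 = -2\langle \rho_I, \alpha^{\vee} \rangle = - \sum_{\beta \in R^+_I} \langle \beta, \alpha^{\vee} \rangle$, and since $\alpha$ is long and differs from every simple root occurring in $\beta$, each term $-\langle \beta, \alpha^{\vee} \rangle$ lies in $\{0,1\}$, so the sum equals $\#(R^+_I \setminus R^+_{I \cap \alpha^{\perp}})$. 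Finally, your side claim that the long roots $\beta$ with $\alpha$-coefficient $1$ form a single $W_I$-orbit is not needed for your closed-orbit argument and is itself nontrivial (the paper asserts the analogous statement only in Remark \ref{rem:simplylaced}, with ``one can show''); it should be dropped rather than relied on.
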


\begin{proof}
By \S \ref{subsec:spaces} and Lemma \ref{lem:lines} (i),
we have $\dim(\cL_x) = - K_X \cdot X(s_{\alpha}) - 2$.
Using Lemma \ref{lem:curves} (v), this yields
\[ 
\dim(\cL_x) = 2  \langle \rho - \rho_I, \alpha^{\vee} \rangle -2
= -2 \langle \rho_I, \alpha^{\vee} \rangle 
= - \sum_{\beta \in R^+_I} \langle \beta,\alpha^{\vee} \rangle.
\]
Note that $0 \leq - \langle \beta, \alpha^{\vee} \rangle \leq 1$
for all $\beta \in R^+_I$, since $\alpha$ is a long simple root
and differs from all the simple roots occuring in $\beta$.
As a consequence, 
\begin{equation}\label{eqn:dim} 
\dim(\cL_x) = \#(R^+_I \setminus R^+_{I \cap \alpha^{\perp}}).
\end{equation}

Next, observe that the tangent map yields a morphism
$\tau : \cL_x \to \bP(T_x X)$, since $\cL_x$ consists of
embedded free curves. Also, $T_x X \simeq \fg/\fp$
and this identifies $\tau(X(s_{\alpha}))$ with 
$[\fg_{-\alpha}]$, the image of the root subspace
$\fg_{-\alpha} \subset \fg$ in $\bP(\fg/\fp)$. Since
$\tau$ is $P$-equivariant, we have the inclusion of
stabilizers 
$L_{X(s_{\alpha})} \subset L_{[\fg_{-\alpha}]}$.
We now show that
\begin{equation}\label{eqn:stab}
L_{X(s_{\alpha})} = L_{[\fg_{-\alpha}]} 
= L \cap P_{\alpha^{\perp}}.
\end{equation}
The Lie algebra $\fl_{[\fg_{-\alpha}]}$ of 
$L_{[\fg_{-\alpha}]}$ is a subalgebra of $\fl$
containing the Borel subalgebra $\fb \cap \fl$, 
and hence is generated by $\fb \cap \fl$ and
the $\fg_{-\beta}$, where $\beta \in I$ and
$\fg_{-\beta}$ stabilizes $[\fg_{-\alpha}]$.
The latter condition is equivalent to 
$[ \fg_{- \beta}, \fg_{-\alpha} ] 
\subset \fg_{-\alpha} + \fp$. 
But 
$[ \fg_{- \beta}, \fg_{-\alpha} ] = 
\fg_{- \alpha - \beta}$ 
by a result of Chevalley (see e.g.~\cite[25.2]{Humphreys}), 
and $\fg_{- \alpha - \beta} = 0$ if $- \alpha - \beta$ 
is not a root. In any case, $- \alpha - \beta$ 
is not a root of $P$. Thus, the Lie algebra
$\fl_{[\fg_{-\alpha}]}$ is generated by $\fb \cap \fl$ 
and the $\fg_{- \beta}$, where $\beta \in I$ and  
$[ \fg_{- \beta}, \fg_{-\alpha} ] = 0$; equivalently,
$\beta \in \alpha^{\perp}$. In other terms,
$\fl_{[\fg_{-\alpha}]} = \fl \cap \fp_{\alpha^{\perp}}$.
On the other hand, $L \cap P_{\alpha^{\perp}}$  
stabilizes $X(s_{\alpha})$ and hence $[\fg_{- \alpha}]$.
Thus, 
\[ L \cap P_{\alpha^{\perp}} \subset L_{X(s_{\alpha})} 
\subset L_{[\fg_{-\alpha}]} \]
and equality holds for the corresponding Lie algebras. 
Since $L \cap P_{\alpha^{\perp}}$ is smooth, this 
easily implies the equalities (\ref{eqn:stab}). 
In turn, this yields (iii) and also the inequalities 
\[ \dim(\cL_x) \geq \dim(L X(s_{\alpha})) =
\dim(L_I/(L_I \cap P_{\alpha^{\perp}}))
= \#(R^+_I \setminus R^+_{I \cap \alpha^{\perp}}). \]
By (\ref{eqn:dim}), it follows that 
$\cL_x = L X(s_{\alpha})$, proving (i). Finally,
(ii) follows from (i) and (\ref{eqn:stab}).
\end{proof}

\begin{remarks}\label{rem:simplylaced}
We still assume that $G$ is simple and $P = P^{\alpha}$.

(i) Assume in addition that $G$ is \emph{simply-laced}, i.e., 
all the roots have the same length. Then $\alpha$ is long 
and $- \alpha$ is a minuscule weight of $L$; we denote by
$V_L(- \alpha)$ the corresponding highest weight module.
By Proposition \ref{prop:long} and its proof, $\cL_x$ is 
a minuscule homogeneous space under $L$; moreover, 
the tangent map is an immersion with image being 
the closed $L$-orbit in 
$\bP(V_L(-{\alpha})) \subset \bP(T_x X)$. One can show 
that the $T$-weights of $V_L(- \alpha)$ are exactly the
$-\beta$, where $\beta \in R^+$ and 
$\beta = \alpha + 
\sum_{\alpha_i \in S, \alpha_i \neq \alpha} n_i \alpha_i$
for some non-negative integers $n_i$; these form
a unique orbit of $W_L$. Moreover, $P$ is minuscule 
if and only if $V_L(- \alpha) = T_x X$; equivalently, 
$W_L$ acts transitively on $R^- \setminus R_L$. 

On the other hand, if $G$ is not simply-laced, then
there exists a short root $\alpha$ such that the variety of
lines through $x$ in $G/P^{\alpha}$ is not homogeneous 
under $P^{\alpha}$; see the main theorem of \cite{CC} 
for a more specific result, valid over an arbitrary field, and 
\cite{Strickland, LM} for further developments.

(ii) When $G$ is simply-laced, the minuscule homogeneous 
spaces $G/P$ and their varieties of lines are exactly those 
in the following table:

\[ \begin{tabular}{| c | c | c | c | c |}
\hline
Type of $G$ & $\alpha$ & $X$ & Type of $L$ &  $\cL_x$ \\
\hline
$\rA_n$ & $\alpha_m$ & $\bG(m,n+1)$ & $\rA_{m-1} \times \rA_{n-m} $
& $(\bP^{m-1})^* \times \bP^{n-m}$ \\
\hline
$\rD_n$ & $\alpha_1$ & $\bQ^{2n-2}$ & $\rD_{n-1}$ & $\bQ^{2n-4}$ \\
\hline
$\rD_n$ & $\alpha_{n-1}, \alpha_n$ & $\bS^{n(n-1)/2}$ & $\rA_{n-1}$ &  $\bG(2,n)$ \\
\hline 
$\rE_6$ & $\alpha_1, \alpha_6$ & $\bX^{16}$ & $\rD_5$ & $\bS^{10}$ \\
\hline
$\rE_7$ & $\alpha_7$ & $\bX^{27}$ & $\rE_6$ &  $\bX^{16}$ \\
\hline
\end{tabular} \]
Here $\bG(m,n+1)$ denotes the Grassmannian of $m$-dimensional 
linear subspaces of $k^{n+1}$, and $\bQ^n \subset \bP^{n+1}$ 
the $n$-dimensional smooth quadric. Also, $\bS^{n(n-1)/2}$ stands for 
the spinor variety of the corresponding dimension, and 
$\bX^{16}, \bX^{27}$ are two exceptional varieties of 
the corresponding dimensions again. The simple roots are ordered 
as in \cite[Chap.~VI]{Bourbaki}, and the list of minuscule
weights is taken from [loc.~cit., \S 4, Exerc.~15].

When $G$ is not simply-laced, one obtains in addition the pairs 
$(\rB_n, \alpha_n)$ and $(\rC_n,\alpha_1)$. The associated minuscule
varieties are isomorphic to those of the pairs $(\rD_{n+1},\alpha_n)$
and $(\rA_{2n-1},\alpha_1)$ respectively, that is, $\bS^{n(n+1)/2}$,
resp.~$\bP^{2n-1}$. Moreover, this identifies the Schubert varieties 
of the former pairs to those of the latter ones. Thus, we may assume 
that $G$ is simply-laced when studying Schubert varieties 
in minuscule $G$-homogeneous spaces.
\end{remarks}

\subsection{Lines on Schubert varieties}
\label{subsec:linesbis}

We keep the notations and assumptions of \S \S \ref{subsec:flag}
and \ref{subsec:schubert}, and start with the following observation:

\begin{lemma}\label{lem:curvesbis}
The $T$-stable curves in $X(w)$ through $wx$ are exactly the
$C_{w,\beta}$, where $\beta \in w(R^+) \cap R^-$.
\end{lemma}

\begin{proof}
The Bruhat decomposition yields a $T$-equivariant open immersion
\[ \prod_{\beta \in R^+ \cap w(R^-)} U_{\beta}
\longrightarrow X(w), \quad
(g_{\beta}) \longmapsto (\prod_{\beta} g_{\beta}) w x \]
with image $Bwx$, where the product is taken in any order.
The assertion follows from this by arguing as in the proof
of Lemma \ref{lem:curves}.
\end{proof}

Our next result implies that every Schubert variety is 
covered by translates of Schubert lines (see also 
\cite[Prop.~3.1]{HoM}):

\begin{lemma}\label{lem:covered}
The following are equivalent for $w \in W^I$ and 
$\alpha \in S \setminus I$:

\begin{enumerate}

\item[{\rm (i)}] $w(\alpha) \in R^-$.

\item[{\rm (ii)}] $X(w)$ is covered by $G$-translates of the Schubert
line $X(s_{\alpha})$.

\end{enumerate}

\end{lemma}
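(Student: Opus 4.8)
The plan is to prove the two implications separately, the common device being that the distinguished translate $\dot w X(s_\alpha)$ is a $T$-stable curve. Since $\dot w G_\alpha \dot w^{-1} = G_{w(\alpha)}$ and $\alpha \in R^+ \setminus R^+_I$, the translate $\dot w X(s_\alpha) = \dot w G_\alpha x = G_{w(\alpha)} wx$ is precisely the curve $C_{w,w(\alpha)}$ through $wx$ of Lemma \ref{lem:curves}. By Lemma \ref{lem:curvesbis}, it lies in $X(w)$ if and only if $w(\alpha) \in w(R^+) \cap R^-$; as $\alpha \in R^+$ forces $w(\alpha) \in w(R^+)$, this holds exactly when $w(\alpha) \in R^-$. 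Thus (i) is equivalent to the single translate $\dot w X(s_\alpha)$ being contained in $X(w)$.

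For (i) $\Rightarrow$ (ii), assuming $w(\alpha)\in R^-$, set $C := \dot w X(s_\alpha) = C_{w,w(\alpha)} \subseteq X(w)$. Since $X(w)$ is $B$-stable, each $bC$ with $b\in B$ is a translate of $X(s_\alpha)$ contained in $X(w)$ and passing through $b \cdot wx$; as $b$ varies, these basepoints sweep out the dense cell $Bwx$. It remains to see that the union of all translates contained in $X(w)$ is closed, hence all of $X(w)$. For this I would use the universal family of lines: with $S := G_{X(s_\alpha)}$ the (parabolic) stabilizer, form $\cI := G \times^S X(s_\alpha)$, with projections $p : \cI \to G/S$, a $\bP^1$-bundle and so flat and proper, and $q : \cI \to G/P$, proper since $\cI$ is. The locus $F := \{ L \in G/S : L \subseteq X(w) \}$ is closed, because its complement is the image under the open map $p$ of the open set $q^{-1}(G/P \setminus X(w))$. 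Then $q(p^{-1}(F)) = \bigcup_{L \subseteq X(w)} L$ is closed, is contained in $X(w)$, and contains the dense set $Bwx$, whence it equals $X(w)$; this is (ii).

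For the converse I would argue the contrapositive via the morphism $\pi_\alpha : G/P_I \to G/P_{I\cup\{\alpha\}}$ of Lemma \ref{lem:lines}. Assume $w(\alpha)\in R^+$; together with $w\in W^I$ this means $w\in W^{I\cup\{\alpha\}}$, so $\pi_\alpha$ restricts to an isomorphism from the open cell $Bwx$ of $X(w)$ onto the open cell of $\pi_\alpha(X(w))$. On the other hand, every translate $gX(s_\alpha) = gP_\alpha x$ is contracted by $\pi_\alpha$ to a point, by $G$-equivariance. If some such translate $L$ lay in $X(w)$ and passed through $wx$, then $L \cap Bwx$ would be a dense open subcurve of $L$ through $wx$ on which $\pi_\alpha$ is a non-constant immersion, contradicting that $\pi_\alpha(L)$ is a point. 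Hence $wx$ lies on no translate of $X(s_\alpha)$ inside $X(w)$, so $X(w)$ is not covered, giving $\neg$(ii).

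The main obstacle is the closedness, rather than mere density, of the union of translates in the first implication; I expect the properness of the universal family $\cI$ together with the closedness of the locus $F$ to resolve it cleanly. A secondary point to justify with care is the standard Schubert-theoretic fact that $\pi_\alpha$ restricts to an isomorphism on the open cell $Bwx$ exactly when $w \in W^{I\cup\{\alpha\}}$, which is what powers the contrapositive.
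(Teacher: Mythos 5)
Your proof is correct, but the two implications are handled differently from the paper, and the comparison is instructive. For (i) $\Rightarrow$ (ii) your core step is the same as the paper's: identify $\dot w X(s_\alpha)$ with the $T$-stable curve $C_{w,w(\alpha)}$, apply Lemma \ref{lem:curvesbis}, and sweep the open cell $Bwx$ with $B$-translates. The paper stops there, concluding that the translates ``cover'' $X(w)$ -- which, taken literally, only exhibits translates through points of the dense cell; your incidence-variety step (properness of $\cI = G\times^S X(s_\alpha)$, closedness of the locus $F$ of lines contained in $X(w)$, hence closedness of $q(p^{-1}(F))$) genuinely upgrades this density statement to honest pointwise covering, and is a worthwhile addition if one reads (ii) in the strong sense rather than in the paper's ``covering family'' sense of \S\ref{subsec:families}. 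For (ii) $\Rightarrow$ (i) your route is genuinely different: the paper argues directly, writing $g = b\dot v b'$ via Bruhat decomposition, using that a closed $T$-stable subset of the cell $Bwx$ contains $wx$ to produce a $T$-stable translate $vX(s_\alpha)$ through $wx$, and then a Bruhat-order comparison ($vs_\alpha < v$ forces $wx = vx$, whence $w(\alpha) = wu^{-1}(\alpha) \in R^-$ using $w(R^+_I)\subset R^+$); you instead argue the contrapositive via the contraction $\pi_\alpha : G/P_I \to G/P_{I\cup\{\alpha\}}$, noting that $w(\alpha)\in R^+$ gives $w\in W^{I\cup\{\alpha\}}$, so $\pi_\alpha$ is injective on the cell $Bwx$ while contracting every translate $gX(s_\alpha)$, so no translate inside $X(w)$ can meet $Bwx$. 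Your argument is shorter and parallels the mechanism already used in Lemma \ref{lem:lines} (iv) and Lemma \ref{lem:contraction}; note it in fact rules out translates through \emph{any} point of the open cell, so it disproves (ii) under either reading of ``covered.'' The two points you flagged -- closedness of the union of translates, and the cell isomorphism for $w\in W^{I\cup\{\alpha\}}$ -- are both handled correctly: the latter follows from the parametrization $\prod_{\beta\in R^+\cap w(R^-)} U_\beta \xrightarrow{\sim} Bwx$ appearing in the proof of Lemma \ref{lem:curvesbis}, whose index set is the same for $P_I$ and $P_{I\cup\{\alpha\}}$.
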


\begin{proof}
(i) $\Rightarrow$ (ii) The translate $w X(s_{\alpha})$ is a $T$-stable
curve with $T$-fixed points $wx, ws_{\alpha} x$.  By Lemma
\ref{lem:curves} or a direct argument, it follows that 
$w X(s_{\alpha}) = C_{w, w(\alpha)}$. So $w X(s_{\alpha}) \subset X(w)$ 
in view of Lemma \ref{lem:curvesbis}. We conclude that the translates
$bw X(s_{\alpha})$, where $b \in B$, cover $X(w)$. 

(ii) $\Rightarrow$ (i) By assumption, there exists $g \in G$
such that $g X(s_{\alpha})$ meets $Bwx$ and is contained in
$X(w)$; equivalently, $Bwx \cap g X(s_{\alpha})$ is dense in 
$g X(s_{\alpha})$. The Bruhat decomposition yields that
$g = b {\dot v} b'$ for some $b,b' \in B$ and $v \in W$; then
$Bwx \cap v X(s_{\alpha})$ is also dense in $v X(s_{\alpha})$. 
Since $Bwx \cap v X(s_{\alpha})$ is closed in $Bwx$ and
stable by $T$, it contains $wx$. Thus, $v X(s_{\alpha})$ is 
a $T$-stable curve through $wx$ in $X(w)$; in particular,
we have either $wx = vx$ or $wx = v s_{\alpha} x$. Replacing
$v$ with $v s_{\alpha}$, we may assume that $v(\alpha) \in R^-$;
then $v s_{\alpha} < v$ for the Bruhat order in $W$, and hence
in $W/W_I$. So we must have $wx = vx$, i.e., $w = v u$
for some $u \in W_I$. Then $w u^{-1}(\alpha) \in R^-$;
as $w(R^+_I) \subset R^+$, it follows that $w(\alpha) \in R^-$ 
as well.
\end{proof}

Next, assume that $P = P^{\alpha}$ where $\alpha$ is a long 
simple root. Let $\cK$ be a family of lines on $X(w)$,
and $\cK_{wx}$ the subfamily of lines through $wx$. It will be
convenient to consider the translate ${\dot w}^{-1} \cK$, 
a family of lines in $w^{-1} X(w)$ through the base point $x$. 
Recall from Proposition \ref{prop:long} that the family $\cL_x$ 
of lines in $G/P$ through $x$ is the minuscule variety 
$L X(s_{\alpha}) \simeq L/(L \cap P_{\alpha^{\perp}})$.

\begin{proposition}\label{prop:linesbis}
With the preceding notation and assumptions, $w^{-1}\cK_{wx}$
is a Schubert subvariety of $\cL_x$. Moreover, the
Schubert subvarieties obtained in this way are exactly the
$\overline{B_L v X(s_{\alpha})}$ where $v \in W_L$,
$wv(\alpha) \in R^-$, and $v$ is maximal for this property.
\end{proposition}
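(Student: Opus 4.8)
The plan is to work inside the minuscule variety $\cL_x = L X(s_\alpha) \simeq L/(L \cap P_{\alpha^\perp})$ and identify $w^{-1}\cK_{wx}$ as a union of $T$-stable curves corresponding to a Schubert cell. First I would observe that since $\cL_x$ is a minuscule homogeneous space under $L$, every $T$-stable curve in it is a line (by the characterization in \S\ref{subsec:schubert}), and the $T$-fixed points of $\cL_x$ are indexed by the cosets $v(L \cap P_{\alpha^\perp})$ with $v \in W_L$. Concretely, the point of $\cL_x$ corresponding to $v \in W_L$ is the line $v X(s_\alpha) = C_{1, v(\alpha)}$ through $x$, whose other $T$-fixed point is $v s_\alpha x = s_{v(\alpha)} x$. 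The key translation is that a line $C$ through $x$ lies in $w^{-1}X(w)$ if and only if its $\dot w$-translate $\dot w \cdot C$ lies in $X(w)$ and passes through $wx$.

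The main step is to characterize which of these $T$-fixed lines $v X(s_\alpha)$ belong to $w^{-1}\cK_{wx}$, i.e.\ which satisfy $w \cdot (v X(s_\alpha)) = C_{w, wv(\alpha)} \subset X(w)$. By Lemma~\ref{lem:curvesbis}, the $T$-stable curve $C_{w,\beta}$ through $wx$ lies in $X(w)$ precisely when $\beta \in w(R^+) \cap R^-$. Setting $\beta = wv(\alpha)$, the membership condition becomes $wv(\alpha) \in R^-$ together with $v(\alpha) \in R^+$ (so that $C_{w,wv(\alpha)}$ is indeed $w$ times a line through $x$, matching the decomposition of $\cL_x$ in Proposition~\ref{prop:long} where lines correspond to $v \in W_L$ with $v(\alpha)$ a positive root not in $R_I$). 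Thus the $T$-fixed points of $w^{-1}\cK_{wx}$ are exactly those $v X(s_\alpha)$ with $wv(\alpha) \in R^-$.

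Next I would upgrade this combinatorial description of $T$-fixed points to the statement that $w^{-1}\cK_{wx}$ is a genuine Schubert subvariety, i.e.\ a $B_L$-orbit closure. Since $w^{-1}\cK_{wx}$ is closed (being the translate of a closed subfamily) and $B_L$-stable, it is a union of $B_L$-orbits in $\cL_x$, hence a union of Schubert subvarieties $\overline{B_L v X(s_\alpha)}$. The crucial point is that $w^{-1}\cK_{wx}$ is \emph{irreducible} when $\cK$ is a single family: $\cK_{wx}$ is the subfamily of lines through a fixed point, and by Lemma~\ref{lem:covering} (applied in the almost-homogeneous setting of \S\ref{subsec:almost}) such a subfamily is a variety, hence irreducible. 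An irreducible $B_L$-stable closed subvariety is a single Schubert variety $\overline{B_L v X(s_\alpha)}$, determined by the \emph{unique maximal} $v$ (in Bruhat order) among those whose $T$-fixed point it contains, which by the previous paragraph is the maximal $v \in W_L$ with $wv(\alpha) \in R^-$.

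The hard part will be verifying that the set $\{v \in W_L : wv(\alpha) \in R^-\}$ is actually a \emph{lower} interval for the Bruhat order on $W_L/(W_L)_{\alpha^\perp}$, so that it admits a well-defined maximal element giving an honest Schubert variety rather than a mere union of cells. I would establish this by a standard exchange/length argument: if $wv(\alpha) \in R^-$ and $v' \leq v$ in the induced order on cosets, one shows $wv'(\alpha) \in R^-$ as well, using that $v \mapsto v(\alpha)$ respects the relevant order and that $w \in W^I$ forces $w(R_I^+) \subset R^+$ (so the condition descends correctly to $W/W_I$-representatives). I expect the bookkeeping with the two coset spaces — the $W_L$-orbit structure of $\cL_x$ on one side and the $W^I$-representatives on the other — to be the most delicate point, together with confirming that the resulting Schubert subvarieties are exactly those arising from families $\cK$, with no spurious $v$ excluded.
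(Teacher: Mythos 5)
Your plan for the second assertion (indexing the $T$-fixed points of $\cL_x$ by $v \in W_L$, identifying $v X(s_{\alpha}) = C_{1,v(\alpha)}$, and applying Lemma \ref{lem:curvesbis} to get the condition $wv(\alpha) \in R^-$) is exactly the paper's argument, and your use of Lemma \ref{lem:covering} for irreducibility of $\cK_{wx}$ is also the right ingredient. But there is a genuine gap in your treatment of the first assertion: you write ``since $w^{-1}\cK_{wx}$ is closed \ldots\ and $B_L$-stable, it is a union of $B_L$-orbits \emph{in} $\cL_x$,'' which presupposes that $w^{-1}\cK_{wx}$ is a subset of $\cL_x$. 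It is not, a priori: $\cK$ is a component of $\RatCurves(X(w))$, which comes only with a morphism $\gamma$ to the Chow variety that is finite over its image (it is built from a normalization, and in positive characteristic one cannot wave this away). So at best your argument shows that the \emph{image} of $w^{-1}\cK_{wx}$ in $\cL_x$ is a Schubert variety; the proposition asserts that $w^{-1}\cK_{wx}$ itself is one, i.e.\ that $\gamma$ is an isomorphism onto its image. This is precisely what the first half of the paper's proof establishes: the quasi-finite $B_L$-equivariant morphism $\gamma : w^{-1}\cK_{wx} \to \cL_x$ is birational because $\cL_x$, being a flag variety for $L$, has finitely many $B_L$-orbits with \emph{smooth connected} isotropy groups, so the stabilizer of a point of the open $B_L$-orbit of $w^{-1}\cK_{wx}$ must coincide with that of its image; then quasi-finite plus birational onto a \emph{normal} target (normality of Schubert varieties) forces $\gamma$ to be an isomorphism. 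Without some version of this step your proof does not close.

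Separately, the ``hard part'' you anticipate at the end rests on a misreading of the statement: you set out to show that $\{ v \in W_L \,:\, wv(\alpha) \in R^- \}$ is a lower interval with a \emph{well-defined} (unique) maximal element. That is false in general and not what the proposition claims: there may be several maximal $v$'s, one for each family $\cK$, and correspondingly the Chow variety of lines through $x$ may be reducible --- the paper's own Example \ref{ex:linesbis} exhibits exactly two maximal $v$'s giving two families. Moreover, the downward closedness you propose to prove by an exchange/length argument comes for free from the geometry: the union of the $w^{-1}\cK_{wx}$ over all families is the closed $B_L$-stable locus of lines through $x$ in $w^{-1}X(w)$, and the fixed-point set of any closed $B_L$-stable subvariety of $\cL_x$ is automatically closed downward in Bruhat order. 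So no combinatorics of coset representatives is needed there; the real work in this proposition is the isomorphism $\gamma$, not the bookkeeping you flagged.
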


\begin{proof}
By Lemma \ref{lem:covering}, $\cK_{wx}$ is a projective variety 
equipped with an action of $B \cap w P w^{-1}$. Thus, 
$w^{-1} \cK_{wx}$ is a projective variety equipped with an action 
of $w^{-1} B w \cap P$, and hence of $B_L$. Moreover, there is
a quasi-finite equivariant morphism 
\[ \gamma : w^{-1} \cK_{wx} \longrightarrow \cL_x, \] 
since we may view $\cL_x$ as the Chow variety of lines in $G/P$.
Since $\cL_x$ is a flag variety under $L$, it has only finitely
many $B_L$-orbits by the Bruhat decomposition; also, the
$B_L$-isotropy groups are smooth and connected. As a consequence, 
$w^{-1} \cK_{wx}$ contains an open orbit of $B_L$; moreover, 
the stabilizer of a point $C$ of this orbit is contained 
in the stabilizer $(B_L)_{\gamma(C)}$, and both have the same 
dimension. Since $(B_L)_{\gamma(C)}$ is smooth and connected, 
both stabilizers must be equal and hence $\gamma$ is birational. 
Using the normality of Schubert varieties, it follows that 
$\gamma$ is an isomorphism. This proves the first assertion. 

For the second assertion, recall that every $B_L$-orbit in $\cL_x$
contains a unique $T$-fixed point; moreover, these fixed points
are exactly the $v X(s_{\alpha})$, where $v \in W_L$. 
Also, $v X(s_{\alpha}) = C_{1,v(\alpha)}$ as $v(\alpha) \in R^+$. 
By Lemma \ref{lem:curvesbis}, it follows that 
$v X(s_{\alpha}) \subset w^{-1} X(w)$ if and only if
$wv(\alpha) \in R^-$.
\end{proof}

\begin{remark}\label{rem:covered}
Let $P = P^{\alpha}$ as above and assume in addition that $P$
is minuscule. Then every minimal family $\cK$ in $X(w)$
consists of lines. Indeed, $\cK_{wx}$ is a projective variety
equipped with a $T$-action, and hence contains a $T$-fixed
point; also, every $T$-stable curve is a line.

Further, note that the smooth locus of $X(w)$ is covered by lines.
Indeed, we may choose $\beta \in S$ such that $s_{\beta} w < w$
for the Bruhat order in $W$; equivalently, $w^{-1}(\beta) \in R^-$.
Then $X(w)$ is stable by the minimal parabolic subgroup 
$P_{\beta}$; moreover, we have 
$C_{w, - \beta} \subset P_{\beta} w x \subset X(w)_{\sm}$ and 
hence the $B$-translates of $C_{w, - \beta}$ cover $X(w)_{\sm}$. 

Given $\beta$ as above, we have 
$w^{-1}(\beta) \in R^- \setminus R_L$. In view of Remark
\ref{rem:simplylaced}, it follows that there exists $v \in W_L$ 
such that $w^{-1}(\beta) = - v(\alpha)$; equivalently, 
$wv(\alpha) = - \beta$. Then $C_{w, - \beta} = wv X(s_{\alpha})$; 
this realizes $C_{w, - \beta}$ as a translate of the Schubert line.
\end{remark}

\begin{example}\label{ex:linesbis}
We illustrate the results of this subsection in the case where
$G := \SL(4)$  (with simple roots $\alpha_1,\alpha_2,\alpha_3$),
and $I := \{ \alpha_1, \alpha_3 \}$; then the parabolic subgroup
$P = P_I = P^{\alpha_2}$ is minuscule.  Let $w := s_1 s_3 s_2$; 
then $w \in W^I$ and $\ell(w) = 3$. The $T$-stable lines through 
$wx$ in $X(w)$ are exactly 
\[
C_1 := C_{w, - \alpha_1} , \quad
C_2 :=  C_{w, - \alpha_1 - \alpha_2 - \alpha_3},  
\quad C_3 := C_{w, -\alpha_3}. \] 
Moreover, 
$C_1 = G_{\alpha_1} wx$, $C_2 = s_1 s_2 G_{\alpha_3} wx$ and 
$C_3 = G_{\alpha_3} wx$. Since $s_1 w, s_3 w < w$, we 
see that $C_1,C_3$ are contained in the smooth locus of $X(w)$. 
But $C_2$ contains $x$, which is the unique singular point 
of $X(w)$.  

The family $\cL_x$ of lines in $G/P$ through $x$ satisfies 
$\cL_x = L X(s_2) \simeq \bP^1 \times \bP^1$;
this isomorphism identifies $X(s_2)$ with
$(\infty,\infty)$. Moreover, the Chow variety of lines in 
$w^{-1} X(w)$ through $x$ is identified with 
$(\bP^1 \times \{ \infty \}) \cup (\{ \infty \} \times \bP^1)$,
the union of two lines meeting at the point $X(s_2) = w^{-1} C_2$. 
These lines are the $B_L$-orbit closures of 
$w^{-1} C_1, w^{-1} C_3$. As a consequence, there are
exactly two families of minimal rational curves on $X(w)$;
those through $wx$ are the $w$-translates of the above lines. 

These results can also be obtained by direct geometric arguments,
since $X = \bG(2,4)$ is embedded in 
$\bP(V(\varpi_2)) = \bP(\Lambda^2 k^4) \simeq \bP^5$ 
as a quadric; moreover, $X(w)$ is  the intersection of $X$ 
with a tangent hyperplane. Thus, $X(w)$ is the projective cone 
over $\bQ^2 \simeq  \bP^1 \times \bP^1$ with vertex $x$. This
cone contains two families of planes (the projective cones
over the two families of lines in $\bP^1 \times \bP^1$)
and the lines in these planes form the two minimal families.
\end{example}

\section{Generalized Bott-Samelson varieties}
\label{sec:perrin}

\subsection{Bott-Samelson desingularizations}
\label{subsec:bott}

We keep the notation of \S\S \ref{subsec:flag} and 
\ref{subsec:schubert}. Let $w \in W^I$. If $w \neq 1$, 
then there exists a decomposition $w = s_{i_1} w'$, 
where $s_{i_1}$ is a simple reflection, $w' \in W$, 
and $\ell(w) =\ell(w') + 1$. It follows that the minimal
parabolic subgroup $P_{i_1}$ stabilizes $X(w)$, and 
$w' \in W^I$. Consider the Schubert variety 
$X(w') \subset G/P$ and the associated fiber bundle 
$P_{i_1} \times^B X(w')$. This is a projective variety 
equipped with an action of $P_{i_1}$ and
an equivariant morphism 
\[ f_{i_1,w'} : P_{i_1} \times^B X(w') \longrightarrow 
P_{i_1}/B \simeq \bP^1, \]
which is a locally trivial fibration (for the Zariski topology)
with fiber $X(w')$. We also have a $P_{i_1}$-equivariant 
morphism 
\[ \pi_{i_1,w'} : P_{i_1} \times^B X(w') \longrightarrow X(w) \] 
which restricts to an isomorphism above the open orbit $Bwx$.

The above ``one-step construction'' can be iterated: 
given a reduced decomposition
\[ \tw = (s_{i_1}, s_{i_2}, \ldots, s_{i_{\ell}}) \]
(i.e., a sequence of simple reflections such that 
$w = s_{i_1} s_{i_2} \cdots s_{i_{\ell}}$ and 
$\ell(w) = \ell$), this yields a projective variety
\[ \tX(\tw) = P_{i_1} \times^B P_{i_2} \times^B
\cdots \times^B P_{i_{\ell}}/B \]
of dimension $\ell$, equipped with an action 
of $P_{i_1}$ and two equivariant morphisms:
\[ f : \tX(\tw) \longrightarrow P_{i_1}/B, \]
a locally trivial fibration with fiber $\tX(\tw')$, 
where $\tw':= (s_{i_2}, \ldots, s_{i_{\ell}})$, and
\[ \pi : \tX(\tw) \longrightarrow X(w), \]
which restricts to an isomorphism above the open orbit $Bwx$. 
Also, note that $\tX(\tw)$ is smooth, and hence $\pi$ 
is a desingularization of $X(w)$. 

More generally, for $1 \leq j \leq \ell$, we have a fibration
\[ f_j : \tX(\tw) \longrightarrow \tX(s_{i_1}, \ldots, s_{i_j}) \]
with fiber $\tX(s_{i_{j+1}}, \ldots, s_{i_{\ell}})$. In particular,
$f_{\ell - 1}$ is a $\bP^1$-bundle; also, $f_1 = f$.

The Bott-Samelson variety $\tX(\tw)$ comes with a base point 
$\tx$, the image of 
$(\dot s_{i_1}, \dot s_{i_2}, \ldots, \dot s_{i_{\ell}})
\in P_{i_1} \times P_{i_2} \times \cdots \times P_{i_{\ell}}$.
Moreover, $\pi(\tx) = wx$ and $B_{\tx} = B_{wx}$.
In particular, $\tx$ is fixed by $T$. Denoting by $\tx_j$
the base point of $\tX(s_{i_1}, \ldots, s_{i_j})$, we have
$f_j(\tx) = \tx_j$. Further, the fiber of $f_j$ at $\tx_j$ is
$T$-equivariantly isomorphic to 
$\tX(s_{i_{j+1}}, \ldots,s_{i_{\ell}})$ on which the $T$-action
is twisted by the Weyl group element $s_{i_1} \cdots s_{i_j}$.

We now recall the description of line bundles on $\tX(\tw)$
obtained in \cite{LT}. For $1 \leq j \leq \ell$, consider the
natural morphism
\[ \pi_j : \tX(s_{i_1}, \ldots, s_{i_j}) \longrightarrow G/B \]
with image the Schubert variety $X(s_{i_1} \cdots s_{i_j})$,
and let 
\[ \cL_j := f_j^* \pi_j^* \cL_{G/B}(\varpi_{i_j}). \]
Then the isomorphism classes of $\cL_1,\ldots, \cL_{\ell}$
form a basis of $\Pic(\tX(\tw))$. Further, for any integers
$n_1, \ldots, n_{\ell}$, the line bundle
$\cL_1^{\otimes n_1} \otimes \cdots \otimes \cL_{\ell}^{\otimes n_{\ell}}$
is ample if and only if $n_1,\ldots,n_{\ell} > 0$; also, every
ample line bundle on $\tX(\tw)$ is very ample. In particular,
$\cL_1 \otimes \cdots \otimes \cL_{\ell}$ is the smallest very
ample line bundle on $\tX(\tw)$. 

Next, we describe the $T$-stable curves through $\tx$ in 
$\tX(\tw)$. Let 
\[ 
\beta_1 := \alpha_{i_1}, \beta_2 := s_{i_1}(\alpha_{i_2}),
\ldots, \beta_{\ell} := s_{i_1} \cdots s_{i_{\ell -1}}(\alpha_{i_{\ell}}). 
\]
Then we have
\begin{equation}\label{eqn:roots} 
R^+ \cap w(R^-) = \{ \beta_1, \beta_2, \ldots, \beta_{\ell} \}. 
\end{equation}
We may now state a version of Lemma \ref{lem:curves} 
for Bott-Samelson varieties:

\begin{lemma}\label{lem:bott}
Keep the above notation.

\begin{enumerate}
 
\item[{\rm (i)}] The $T$-stable curves in $\tX(\tw)$ through $\tx$ 
are exactly the $\tC_j := \overline{U_{\beta_j} \tx}$,
where $1 \leq j \leq \ell$. 

\item[{\rm (ii)}] $\pi$ restricts to isomorphisms
$\tC_j \to C_{w,-\beta_j}$ for all such $j$.

\item[{\rm (iii)}] For $1 \leq j,k \leq \ell$, we have 
$\cL_k \cdot \tC_j = 0$ if $j > k$. If $j \leq k$ then 
$\cL_k \cdot \tC_j$ is the coefficient of 
$\alpha_{i_k}^{\vee}$ in 
$s_{i_k} \cdots s_{i_{j+1}}(\alpha_{i_j}^{\vee})$,
viewed as a linear combination of simple coroots.

\item[{\rm (iv)}] We have
$-K_{\tX(\tw)} \cdot \tC_j = 
\he( s_{i_{\ell}} \cdots s_{i_{j + 1}}(\alpha_{i_j}^{\vee} ) )+ 1$.

\end{enumerate}

\end{lemma}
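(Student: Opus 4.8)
The plan is to base everything on an explicit $T$-stable affine chart around the base point $\tx$, in the spirit of the proof of Lemma~\ref{lem:curves}. For (i) I would use the map $\bA^\ell \to \tX(\tw)$ sending $(z_1,\dots,z_\ell)$ to the class of $(u_{\alpha_{i_1}}(z_1)\dot s_{i_1},\dots,u_{\alpha_{i_\ell}}(z_\ell)\dot s_{i_\ell})$, where $u_\alpha$ is the standard parametrization of $U_\alpha$. Since each slot $u_{\alpha_{i_k}}(z_k)\dot s_{i_k}B$ runs over the big cell of $P_{i_k}/B$, this is an open immersion onto a $T$-stable neighborhood of $\tx$ (the origin). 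Pushing a torus element $t$ successively through the factors and absorbing the leftover torus element into the next slot via the $\times^B$ relation, I would check that $T$ scales $z_k$ by the character $\beta_k=s_{i_1}\cdots s_{i_{k-1}}(\alpha_{i_k})$; thus the chart realizes $T_{\tx}\tX(\tw)$ as a $T$-module with the $\ell$ distinct weights $\beta_1,\dots,\beta_\ell$, each of multiplicity one (distinctness being (\ref{eqn:roots})). Hence the $T$-stable curves through $\tx$ are exactly the closures of the coordinate axes, and matching tangent weights identifies the $z_k$-axis with $\overline{U_{\beta_k}\tx}$ (note $\beta_k\in R^+$, so $U_{\beta_k}\subset B\subset P_{i_1}$ really does act), giving $\tC_k$.

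For (ii), since $\pi$ is $P_{i_1}$-equivariant with $\pi(\tx)=wx$, equivariance gives $\pi(\tC_j)=\overline{U_{\beta_j}wx}$. From $\beta_j\in R^+\cap w(R^-)$ and $w\in W^I$ one checks $-\beta_j\in w(R^+\setminus R^+_I)$, so this closure is $C_{w,-\beta_j}$ by Lemma~\ref{lem:curves}(i); comparing the two (free) orbit maps out of $U_{\beta_j}$ shows $\pi|_{\tC_j}$ has degree one, hence is an isomorphism of projective lines. For (iii), the vanishing when $j>k$ is immediate: in the chart $f_k$ forgets $z_{k+1},\dots,z_\ell$, so it contracts $\tC_j$ and $\cL_k\cdot\tC_j=0$. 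For $j\le k$, $f_k$ maps $\tC_j$ isomorphically onto the analogous curve in $\tX(s_{i_1},\dots,s_{i_k})$, which $\pi_k$ in turn sends isomorphically to $C_{v_k,-\beta_j}\subset G/B$ with $v_k:=s_{i_1}\cdots s_{i_k}$ (by (ii) applied inside the smaller Bott-Samelson variety). The projection formula and Lemma~\ref{lem:curves}(iv) then give $\cL_k\cdot\tC_j=\langle\varpi_{i_k},v_k^{-1}((-\beta_j)^\vee)\rangle$; cancelling $s_{i_{j-1}}\cdots s_{i_1}s_{i_1}\cdots s_{i_{j-1}}=1$ and using $s_{i_j}(\alpha_{i_j}^\vee)=-\alpha_{i_j}^\vee$ simplifies $v_k^{-1}((-\beta_j)^\vee)$ to $s_{i_k}\cdots s_{i_{j+1}}(\alpha_{i_j}^\vee)$, and pairing with $\varpi_{i_k}$ extracts exactly the asserted coefficient of $\alpha_{i_k}^\vee$.

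For (iv) the key input is the canonical bundle formula $-K_{\tX(\tw)}=\bigotimes_{k=1}^\ell f_k^*\pi_k^*\cL_{G/B}(\alpha_{i_k})$. I would derive it from the tower of $\bP^1$-bundles $g_k:\tX(s_{i_1},\dots,s_{i_k})\to\tX(s_{i_1},\dots,s_{i_{k-1}})$: each $g_k$ is the pullback along $\pi_{k-1}$ of the homogeneous $\bP^1$-bundle $G/B\to G/P_{i_k}$, whose relative tangent bundle is $\cL_{G/B}(\alpha_{i_k})$ (weight $-\alpha_{i_k}$ at $eB$, degree $\langle\alpha_{i_k},\alpha_{i_k}^\vee\rangle=2$ on fibers). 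Hence $T_{g_k}=\pi_k^*\cL_{G/B}(\alpha_{i_k})$, and taking determinants along the relative tangent filtration of the tower multiplies these together. The intersection number is then computed exactly as in (iii) with $\alpha_{i_k}$ in place of $\varpi_{i_k}$, giving $-K_{\tX(\tw)}\cdot\tC_j=\sum_{k=j}^\ell\langle\alpha_{i_k},\gamma_k^\vee\rangle$ where $\gamma_k^\vee:=s_{i_k}\cdots s_{i_{j+1}}(\alpha_{i_j}^\vee)$. The $k=j$ term is $\langle\alpha_{i_j},\alpha_{i_j}^\vee\rangle=2$; for $k>j$, using $s_{i_k}(\rho)=\rho-\alpha_{i_k}$ one gets $\langle\alpha_{i_k},\gamma_k^\vee\rangle=\he(\gamma_k^\vee)-\he(\gamma_{k-1}^\vee)$, so the sum telescopes to $2+(\he(\gamma_\ell^\vee)-\he(\alpha_{i_j}^\vee))=\he(s_{i_\ell}\cdots s_{i_{j+1}}(\alpha_{i_j}^\vee))+1$, as claimed.

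I expect two delicate points. The first is the torus/Weyl bookkeeping in the chart, namely getting the scaling weights $\beta_k$ exactly right, since this underlies (i) and is reused in (iii). The second, and the genuine obstacle, is pinning down the canonical bundle formula in (iv): one must identify each relative tangent line bundle $T_{g_k}$ with $\pi_k^*\cL_{G/B}(\alpha_{i_k})$ via the base-change description of $g_k$. Once that is in hand, (iv) reduces to the same computation as (iii) together with the purely combinatorial telescoping identity, which is routine. (As a consistency check, the same value of $-K_{\tX(\tw)}\cdot\tC_j$ can be recovered by Atiyah--Bott localization on $\tC_j$, comparing the sum of tangent weights $\sum_k\beta_k$ at $\tx$ with those at the opposite fixed point, where the reflection $s_{i_j}$ is omitted from the accumulated Weyl elements.)
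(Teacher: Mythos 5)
Your proof is correct, and for (i)--(iii) it follows essentially the paper's route: the paper obtains (i) by observing that $\pi$ maps the open $T$-stable neighborhood $B\tx$ isomorphically to the cell $Bwx$ and then quoting Lemmas \ref{lem:curves} and \ref{lem:curvesbis}, while you build the Bott--Samelson chart directly and recompute the weights $\beta_1,\ldots,\beta_\ell$ upstairs --- the same content (one point you use tacitly: distinct weights of multiplicity one force the $T$-stable curves to be coordinate axes only because no two of the $\beta_k$ are positively proportional, which holds here since they are distinct roots of a reduced root system; the paper's chart argument has the same implicit step). Your (ii) and (iii), including the contraction argument for $j>k$, the factorization through $f_k$ and $\pi_k$, and the cancellation producing $s_{i_k}\cdots s_{i_{j+1}}(\alpha_{i_j}^{\vee})$, coincide with the paper's. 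The genuine divergence is (iv). The paper computes $-K_{\tX(\tw)}\cdot\tC_1=\langle \beta_1+\cdots+\beta_\ell,\alpha_{i_1}^{\vee}\rangle$ directly from the $G_{\alpha_{i_1}}$-orbit structure of $\tC_1$ and the identity $\beta_1+\cdots+\beta_\ell=\rho-w(\rho)$ coming from (\ref{eqn:roots}), and then handles $j\geq 2$ by induction on $\ell$, restricting to the fiber $F$ of $f$ (where $K_{\tX(\tw)}\vert_F=K_F$) with its twisted identification $F\simeq\tX(\tw')$. You instead establish the closed-form anticanonical formula $-K_{\tX(\tw)}=\bigotimes_{k=1}^{\ell} f_k^*\pi_k^*\cL_{G/B}(\alpha_{i_k})$ via the fiber-product description $\tX(s_{i_1},\ldots,s_{i_k})\simeq \tX(s_{i_1},\ldots,s_{i_{k-1}})\times_{G/P_{i_k}}G/B$, which correctly gives $T_{g_k}=\pi_k^*\cL_{G/B}(\alpha_{i_k})$ (your weight $-\alpha_{i_k}$ at $eB$ matches $\fp_{i_k}/\fb=\fg_{-\alpha_{i_k}}$ and is the convention consistent with the degree formula of Lemma \ref{lem:curves}(iv)), and then treat all $\tC_j$ uniformly by the computation of (iii) with $\alpha_{i_k}$ in place of $\varpi_{i_k}$ together with the telescoping identity $\langle\alpha_{i_k},\gamma_k^{\vee}\rangle=\he(\gamma_k^{\vee})-\he(\gamma_{k-1}^{\vee})$, which checks out. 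Your route buys the canonical class of $\tX(\tw)$ as a byproduct --- a formula the paper only alludes to in Remark \ref{rem:final} via \cite{LT} and \cite[\S 6.2]{Pe07} --- and avoids both the induction and the twisted $T$-action bookkeeping; the paper's route is shorter once Lemma \ref{lem:curves} is in place and requires no base-change identification of relative tangent bundles. One small caveat: your closing aside should not literally invoke Atiyah--Bott localization, since the ground field has arbitrary characteristic; the algebraic substitute (degree of a $T$-linearized line bundle on a $T$-stable $\bP^1$ from its fixed-point weights) is exactly the device used in the proof of Lemma \ref{lem:curves}(iv), so the consistency check survives in that form.
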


\begin{proof}
(i) This follows from Lemmas \ref{lem:curves} and
\ref{lem:curvesbis}, since $\pi$ sends $B \tx$ (an open $T$-stable
neighborhood of $\tx$ in $\tX(\tw)$) isomorphically to $Bwx$.

For (ii), note that $\pi$ restricts to a birational morphism
$\tC_j \to C_{w,-\beta_j} \simeq \bP^1$.

(iii) If $j > k$ then $\tC_j$ is contracted by $f_k$. This implies
the first assertion by using the projection formula.

If $j \leq k$ then $f_k$ restricts to an isomorphism of $\tC_j$
onto the $j$th $T$-stable curve in $\tX(s_{i_1},\ldots,s_{i_k})$
through $\tx_k$. Further, the latter curve is sent isomorphically 
by $\pi_k$ to the $T$-stable curve 
$C_{s_{i_1} \cdots s_{i_k},-\beta_j} \subset G/B$.
Using Lemma \ref{lem:curves}, it follows that
\[ \cL_k \cdot \tC_j =
\langle \varpi_{i_k}, - s_{i_k} \cdots s_{i_1}(\beta_j^{\vee}) \rangle 
= \langle \varpi_{i_k}, 
s_{i_k} \cdots s_{i_{j+1}}(\alpha_j^{\vee}) \rangle . \]
This yields the second assertion.

(iv) We first determine $-K_{\tX(\tw)} \cdot \tC_1$. Note that
$G_{\alpha_{i_1}} \subset P_{i_1}$ acts on $\tX(\tw)$ and we have
$\tC_1 = G_{\alpha_{i_1}} \tx$; also, the tangent space of $\tX(\tw)$ 
at $\tx$ is a direct sum of $T$-stable lines with weights 
$\beta_1, \ldots, \beta_{\ell}$. Arguing as in
the proof of Lemma \ref{lem:curves} (iv), it follows that  
\[ 
- K_{\tX(\tw)} \cdot \tC_1 = 
\langle \beta_1 + \cdots + \beta_{\ell}, \alpha_{i_1}^{\vee} \rangle. 
\]
But $\beta_1 + \cdots + \beta_{\ell} = \rho - w(\rho)$
in view of (\ref{eqn:roots}), and hence
\[ - K_{\tX(\tw)} \cdot \tC_1  = 
\langle \rho - w(\rho), \alpha_{i_1}^{\vee} \rangle  =
1 + \he( -w^{-1}(\alpha_{i_1}^{\vee}) ). \]
This yields the assertion, since   
$- w^{-1}(\alpha_{i_1}) =  s_{i_{\ell}} \cdots s_{i_2}(\alpha_{i_1})$.

Next, we determine $-K_{\tX(\tw)} \cdot \tC_j$, where $j \geq 2$.
Then $\tC_j$ is contracted by $f$, i.e., 
$\tC_j \subset F := f^{-1} f(\tx)$. Since $f$ is a locally trivial
fibration, it follows that 
$-K_{\tX(\tw)} \cdot \tC_j = -K_F \cdot \tC_j$. Recall that $F$
is $T$-equivariantly isomorphic to the Bott-Samelson 
variety $\tX(\tw')$ on which the $T$-action is twisted by 
$s_{i_1}$, and this isomorphism sends $\tx$ to the base point
of $\tX(\tw')$. Using an easy induction argument, this 
completes the proof.
\end{proof}

\begin{remark}\label{rem:linesbott}
By Lemma \ref{lem:bott}, we have for $1 \leq k \leq \ell$:
\[ \cL_k \cdot \tC_{\ell} = \begin{cases}
1 & {\rm if}~k = \ell, \\
0 & {\rm else}. \\
\end{cases} \]
Thus, $\tC_{\ell}$ is a line in the smallest projective embedding 
of $\tX(\tw)$.

Also, $\cL_j \cdot \tC_j = 1$ for $1 \leq j \leq \ell$; as a consequence,
$\tC_j$ is a line if and only if $\cL_k \cdot \tC_j = 0$ for all
$j < k$. By an easy argument, this is equivalent to the assertion
that $s_{i_j}$ commutes with $s_{i_{j+1}},\ldots, s_{i_{\ell}}$.
Then there is an isomorphism of resolutions of $X(w)$
\[ \tX(\tw) = \tX(s_{i_1}, \ldots, s_{i_{\ell}}) \simeq
\tX(s_{i_1}, \ldots, s_{i_{j-1}}, s_{i_{j+1}}, \ldots, s_{i_{\ell}}, s_{i_j}) \]
which identifies $\tC_j$ with the line in the right-hand side constructed
as above. 
\end{remark}

Next, we determine the minimal rational curves in $\tX(\tw)$
(these include of course the lines discussed above):

\begin{theorem}\label{thm:bott}
Every minimal family $\cK$ on $\tX(\tw)$ satisfies
$\cK_{\tx} = \{ \tC_j \}$ for some $1 \leq j \leq \ell$. Moreover, 
the minimal rational curves in $\tX(\tw)$ through $\tx$ are exactly 
those $\tC_j$ such that the root 
$s_{i_{\ell}} \cdots s_{i_{j + 1}}(\alpha_{i_j})$ is simple.
\end{theorem}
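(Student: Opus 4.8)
The plan is to exploit that $\tX(\tw)$ is smooth and almost homogeneous under $B$, with open orbit $B\tx$ and base point $\tx$ fixed by $T$, and then to pin down $\cU_{\tx}$ as a smooth projective $T$-variety with a single fixed point. First I would record the setup: the morphism $\pi$ maps the open set $B\tx$ isomorphically onto the open orbit $Bwx$, and $\dim B\tx=\ell=\dim\tX(\tw)$, so $\tX(\tw)$ is almost homogeneous under $B$ with smooth connected stabilizer $B_{\tx}=B_{wx}$; thus the machinery of \S\ref{subsec:almost} applies with $G=B$. Since $\tX(\tw)$ is smooth and $\cK$ is covering, the general curve of $\cK$ meets $B\tx$ and lies in the smooth locus, so by Lemma \ref{lem:smooth} every curve of $\cK$ is free and $\cU_{\tx}$ is a normal variety, smooth of dimension $-K_{\tX(\tw)}\cdot C-2$ at each $C$. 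By Lemma \ref{lem:covering} it is nonempty, and minimality makes it projective (via the finite $T$-equivariant map $\rho_{\tx}$ onto the projective variety $\cK_{\tx}$).

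The heart of the argument is a fixed-point count. As $\tx$ is $T$-fixed, $T$ acts on $\cU_{\tx}$, and Borel's theorem produces a $T$-fixed point; its image in $\cK_{\tx}$ is a $T$-stable curve through $\tx$, hence some $\tC_j$ by Lemma \ref{lem:bott}(i). Because $\cK$ is irreducible, all its curves share one numerical class, and since $[\tC_1],\dots,[\tC_{\ell}]$ are independent (the matrix $(\cL_k\cdot\tC_j)$ is triangular with unit diagonal, Lemma \ref{lem:bott}(iii)), the class of $\cK$ must be $[\tC_j]$ for a single $j$, and $\tC_j$ is the only $T$-stable curve in $\cK_{\tx}$. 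I would then transfer this to $\cU_{\tx}$: the fiber $\rho^{-1}(\tC_j)\cong\bP^1$ maps to $\tC_j\cong\bP^1$ by the normalization, an isomorphism, and $\tx$ is a smooth point of $\tC_j$, so there is a unique point of $\cU_{\tx}$ over $\tx$ on this curve. Hence $\cU_{\tx}$ has exactly one $T$-fixed point. Invoking the Bialynicki-Birula decomposition for a generic one-parameter subgroup $\bG_m\subseteq T$ --- a smooth projective variety with a $\bG_m$-action and a single fixed point is a point --- I conclude that $\cU_{\tx}$, and with it $\cK_{\tx}=\{\tC_j\}$, is a single point. This is the first assertion.

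For the second assertion I would feed the dimension formula into this. From $\dim\cU_{\tx}=0$ and Lemma \ref{lem:smooth}(iii) we get $-K_{\tX(\tw)}\cdot\tC_j=2$, and Lemma \ref{lem:bott}(iv) then reads $\he(s_{i_{\ell}}\cdots s_{i_{j+1}}(\alpha_{i_j}^{\vee}))=1$, i.e.\ $s_{i_{\ell}}\cdots s_{i_{j+1}}(\alpha_{i_j})$ is simple. Conversely, if this root is simple, then $\tC_j\cong\bP^1$ lies in the smooth locus and meets the open orbit, hence is free (Lemma \ref{lem:smooth}(i)) and generates a covering family $\cK$ with $\dim\cK_{\tx}=-K_{\tX(\tw)}\cdot\tC_j-2=0$; as $\cK_{\tx}$ is an irreducible variety (Lemma \ref{lem:covering}) containing $\tC_j$, it equals $\{\tC_j\}$, which is projective, so $\cK$ is minimal and $\tC_j$ is a minimal rational curve. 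Thus the minimal rational curves through $\tx$ are exactly those $\tC_j$ with $s_{i_{\ell}}\cdots s_{i_{j+1}}(\alpha_{i_j})$ simple.

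The main obstacle, and the step I would treat most carefully, is the fixed-point analysis: making precise that $\cU_{\tx}$ (and not merely $\cK_{\tx}$) is smooth and projective, so that Bialynicki-Birula applies, and then controlling the $T$-fixed points on the $\bP^1$-fibers of $\rho$ so that uniqueness genuinely descends from $\cK_{\tx}$ to $\cU_{\tx}$. The linear independence of the classes $[\tC_j]$ and the identification $B_{\tx}=B_{wx}$ (which supplies the required smooth connected stabilizer) are the two further places where the bookkeeping must be verified.
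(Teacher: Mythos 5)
Your proposal is correct, and it reaches the two key conclusions by a genuinely different route from the paper's. The paper argues by induction on $\ell$: after Borel's fixed point theorem produces some $\tC_j \in \cK_{\tx}$, the case $j \geq 2$ is handled by observing that $\tC_j$ is contracted by the fibration $f$, so that Lemma \ref{lem:contraction} identifies $\cK_{\tx}$ with a minimal family on the fiber, a smaller Bott-Samelson variety; in the remaining case $j = 1$, uniqueness of the fixed point follows from the contraction-degree argument, and $\cK_{\tx}$ is shown to be a single point via a $T$-equivariant immersion into the projectivization of a $T$-module (through the Chow construction) together with \cite[Prop.~13.5]{Borel}. You avoid the induction altogether: the constancy of degrees against line bundles in the irreducible family $\cK$, combined with the triangular unit-diagonal matrix $(\cL_k \cdot \tC_j)$ of Lemma \ref{lem:bott}(iii), shows directly that $\cK$ contains at most one of the $\tC_j$, and you then collapse $\cU_{\tx}$ to a point by Bia{\l}ynicki-Birula, having verified the smoothness and projectivity of $\cU_{\tx}$ that this needs (smoothness holds because every curve through $\tx$ meets the open $B$-orbit and hence is free; Bia{\l}ynicki-Birula for smooth projective varieties is valid in all characteristics, consistent with the paper's setting). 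Your argument is uniform in $j$ and arguably cleaner; the paper's version uses only the softer fixed-point statement for arbitrary projective $T$-varieties with linearized action (no smoothness of $\cU_{\tx}$ required), and its contraction-plus-induction step prefigures the pattern reused for generalized Bott-Samelson varieties in Theorem \ref{thm:final}. Two small remarks: your assertion that \emph{every} curve of $\cK$ is free should be restricted to curves meeting the open orbit — only those are covered by Lemma \ref{lem:smooth}, and only curves through $\tx$ are needed (the paper's own proof contains the same harmless overstatement); and your citation of Lemma \ref{lem:bott}(iv) for the height computation is the correct one, where the paper's proof cites (iii), apparently a typo. Your treatment of the converse (root simple implies minimal) matches what the paper leaves implicit: the dimension formula plus the irreducibility of $\cU_{\tx}$ from Lemma \ref{lem:covering}, with minimality at a general point supplied by the homogeneity of $\cU^0$ over the open orbit.
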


\begin{proof}
We argue as in the proof of Lemma \ref{lem:lines}. 
By Lemma \ref{lem:covering}, $\cK_{\tx}$ is a projective 
variety; moreover, $\cK$ consists of free curves in view of 
Lemma \ref{lem:smooth}. Since $\cK_{\tx}$ is equipped with 
an action of $T$, it contains a $T$-fixed point, say $\tC_j$.

If $j > 1$ then $\tC_j$ is contracted by $f$. In view of Lemma
\ref{lem:contraction}, it follows that $\cK_{\tx} = \cL_{\tx}$ 
for a unique minimal family $\cL$ on the fiber of $f$ at $\tx$.
Since this fiber is a translate of a smaller Bott-Samelson variety,
we may conclude by induction on $\ell$.

Thus, we may assume that $j = 1$; then $\tC_1$ is the
unique $T$-fixed point of $\cK_{\tx}$. Also, $\cK_{\tx}$ admits
an ample $T$-linearized line bundle: indeed, it is equipped
with a finite $T$-equivariant morphism to some Chow variety 
of $\tX(\tw)$, which in turn is equipped with a finite $T$-equivariant 
morphism to the projectivization of a $T$-module in view of
its construction in \cite[I.3]{Kollar}. As a consequence, $\cK_{\tx}$ 
admits a $T$-equivariant immersion in the projectivization of 
a $T$-module. Using \cite[Prop.~13.5]{Borel}, it follows that 
$\cK_{\tx}$ consists of the unique curve $\tC_1$. 

On the other hand, by Lemma \ref{lem:smooth}, $\tC_1$ 
lies in a unique family of rational curves $\cL$ on $\tX(\tw)$; 
moreover, $\cL$ is covering and satisfies 
\[ \dim(\cL_{\tx}) = - K_{\tX(\tw)} \cdot \tC_1 - 2. \]
In view of Lemma \ref{lem:bott} (iii), this vanishes if and only 
if the root $s_{i_{\ell}} \cdots s_{i_2}(\alpha_{i_1})$ 
is simple.
\end{proof}

\begin{remarks}\label{rem:bott}
(i) Since 
$s_{i_{\ell}} \cdots s_{i_{j + 1}}(\alpha_{i_j}) =- w^{-1}(\beta_j)$,
the simple roots $\alpha_k$ obtained as 
$s_{i_{\ell}} \cdots s_{i_{j + 1}}(\alpha_{i_j})$ for some $j$ 
are exactly those such that $w(\alpha_k) \in R^-$. Then
$\pi$ sends $\tC_j$ to $w X(s_k)$, a translate of
a Schubert line. This relates the minimal rational curves 
in $\tX(\tw)$ through $\tx$ to the  lines in $X(w)$ through 
$wx$ constructed in Lemma \ref{lem:covered}. 
In particular, we may take $j = \ell$, ie.,
$\alpha_k = \alpha_{i_{\ell}}$; this just gives back
the line $\tC_{\ell}$ (Remark \ref{rem:linesbott}).

If $P = P^{\alpha}$ is maximal, then we must have 
$\alpha = \alpha_k$ and $j = \ell$. Thus, $\tX(\tw)$
has a unique minimal family, consisting of the fibers
of the $\bP^1$-bundle 
$f_{\ell - 1} : \tX(\tw) \to \tX(s_{i_1}, \ldots, s_{i_{\ell - 1}})$. 

\smallskip \noindent
(ii) The condition that 
$s_{i_{\ell}} \cdots s_{i_{j + 1}}(\alpha_{i_j})$ is a simple
root, say $\alpha_k$, turns out to be equivalent to the 
exchange condition
\[ s_{i_1} s_{i_2} \cdots s_{i_{\ell}} = 
s_{i_1} \cdots \widehat{s_{i_j}} \cdots s_{i_{\ell}} s_k, \]
where both sides are reduced decompositions of $w$.
\end{remarks}

\subsection{Their generalizations \`a la Perrin}
\label{subsec:perrin}

Let $w \in W$. Recall that the set of simple roots $\alpha$ such 
that $s_{\alpha}$ occurs in a reduced decomposition of $w$ 
is independent of the reduced decomposition, and called the 
\emph{support} of $w$. We denote this set by $\Supp(w)$.
The subgroup of $G$ generated by the $U_{\pm \alpha}$,
where $\alpha \in \Supp(w)$, will be denoted by $G_w$; this is
the derived subgroup of the Levi subgroup $L_{\Supp(w)}$, and
hence is a semi-simple subgroup of $G$, normalized by $T$
and containing a representative of $w$.

Denote by $P^w$ be the largest parabolic subgroup of $G$
such that $P^w \supset B$ and $w \in W^{P^w}$; then 
$P^w = P_{I^w}$, where 
$I^w := \{ \alpha \in S ~\vert~ w(\alpha) \in R^+ \}$. 
Consider the associated Schubert variety $X(w) \subset G/P^w$,
and denote by $P_w$ the closed reduced subgroup of $G$ 
consisting of those $g$ such that $ g X(w) = X(w)$. 
Then $P_w$ is a parabolic subgroup of $G$ containing $B$,
and hence $P_w = P_{I_w}$, where 
$I_w := \{ \alpha \in S ~\vert~ s_{\alpha} w \leq w \}$; here 
$\leq$ denotes the Bruhat order on $W^{I^w}= W/W_{I^w}$. 
Note that $P_w \cap G_w$ is a parabolic subgroup of $G_w$,
and we have
\begin{equation}\label{eqn:schubert}
X(w) = \overline{P_w w P^w}/P^w \simeq 
\overline{(P_w \cap G_w) w (P^w \cap G_w)}/(P^w \cap G_w)
\subset G_w/(P^w \cap G_w).
\end{equation}
We say that $w \in W$ is minuscule if so is $G/P^w$; then 
one may readily check that $G_w/(P^w \cap G_w)$ is minuscule 
as well. Also, note that $G_w$ is simply-laced if so is $G$.
For any minuscule $w \in W$, we have 
$X(w)_{\sm} = (P_w \cap G_w) w x$ by \cite[Prop.~3.3]{BP}. 
In particular, $X(w)$ is smooth if and only if it is homogeneous 
under $P_w \cap G_w$.

Next, let $w = w_1 w'$, where $w_1, w' \in W$ satisfy
$P^{w_1} \cap G_{w_1} \subset P_{w'}$; equivalently, we have
$I^{w_1} \cap \Supp(w_1) \subset I_{w'}$. We may then define
\[ \tX(w_1,w') := 
\overline{(P_{w_1} \cap G_{w_1}) w_1 (P^{w_1} \cap G_{w_1})}
\times^{P^{w_1} \cap G_{w_1}} X(w'). \]
This is a projective variety equipped with an action of
$P_{w_1} \cap G_{w_1}$ and an equivariant morphism
\[ f_{w_1,w'} : \tX(w_1,w') \longrightarrow X(w_1), \]
which is a Zariski locally trivial fibration with fiber $X(w')$.
If in addition $\ell(w) = \ell(w_1) + \ell(w')$, then
$w' \in W^{P^w}$ and hence $P^{w'} \supset P^w$.
Thus, if $P^w$ is maximal and $w' \neq 1$, then $P^{w'} = P^w$. 
Under these assumptions, we obtain another equivariant morphism
\[ \pi_{w_1,w'} : \tX(w_1,w') \longrightarrow G/P^w. \]
One may check that $\pi_{w_1,w'}$ is birational to its image
$X(w)$; it restricts to an isomorphism above the open orbit 
$Bwx$. Also, note that 
\begin{equation}\label{eqn:sub} 
P_{w_1} \cap G_{w_1} \subset P_w \cap G_w. 
\end{equation}

\begin{remark}\label{rem:perrin}
If $w_1$ is a simple reflection $s_{\alpha}$, then $P^{w_1}$ 
is the maximal parabolic subgroup 
$P^{\alpha} = P_{S \setminus \{ \alpha \} }$,
and $X(w_1)$ is the Schubert line in $G/P^{\alpha}$. 
Moreover, $G_{w_1} =G_{\alpha}$ and 
$P_{w_1} = P_{\{ \alpha \} \cup \alpha^{\perp} }$.
So $P^{w_1} \cap G_{w_1} = B \cap G_{\alpha}$ 
is a Borel subgroup of $G_{\alpha}$. Thus, we have
\[ \tX(s_{\alpha},w') = 
G_{\alpha} \times^{B \cap G_{\alpha}} X(w') \simeq
P_{\alpha} \times^B X(w'), \]
with fibration $f_{s_{\alpha},w'}$ over 
$P_{\alpha}/B \simeq \bP^1$. If in addition 
$\ell(w) = \ell(w') + 1$, then $\pi_{s_{\alpha},w'}$ yields
a birational morphism to $X(w)$. Thus, the above ``one-step 
construction'' generalizes that of Bott-Samelson varieties.
\end{remark}

This construction can be iterated, under certain additional
assumptions that are discussed in detail in \cite[\S 5.2]{Pe07}.
We now present some notions and results from [loc.~cit.]:
a finite sequence $\hw = (w_1,\ldots,w_m)$ of elements of $W$
is called a \emph{generalized reduced decomposition of} $w$,
if we have $w = w_1 \cdots w_m$ and 
$\ell(w) = \ell(w_1) + \cdots + \ell(w_m)$.
Such a decomposition is called \emph{good} if in addition 
$w$ is minuscule and we have 
\[ I^{w_i} \cap \Supp(w_i) \subset I_{w_{i+1} \cdots w_m}
\subset w_i^{\perp} \cup \Supp(w_i) \quad (1 \leq i \leq m -1), \]
where $w_i^{\perp}$ denotes the set of simple roots $\alpha$ 
such that $s_{\alpha}$ commutes with $w_i$. 
Under these assumptions, $(w_{i+1},\ldots,w_m)$ is a good 
generalized reduced decomposition of $w_{i+1} \cdots w_m$ 
for $i = 1, \ldots, m-1$. Moreover, 
$P^{w_i} \cap G_{w_i} \subset P_{w_{i+1} \cdots w_m}$
for all such $i$. 

Given a good generalized reduced decomposition $\hw$ of $w$, 
we obtain a projective variety $\hX(\hw)$ equipped with an action
of $P_w$, a locally trivial fibration
\[ \hf : \hX(\hw) \longrightarrow X(w_1) \]
with fiber $\hX(w_2,\ldots,w_m)$, and a birational morphism
\[ \hpi : \hX(\hw) \longrightarrow X(w). \]
Also, $\hX(\hw)$ has a base point $\hx$ such that
$\hpi(\hx) = wx$ and $\hf(\hx) = w_1 x_1$, where $x_1$ denotes
the base point of $G/P^{w_1}$. 

More generally, for $1 \leq i \leq m$, we have a fibration 
\[ \hf_i : \hX(\hw) \longrightarrow \hX(w_1,\ldots,w_i) \]
with fiber $\hX(w_{i+1},\ldots,w_m)$. Further, $\hf_i(\hx) = \hx_i$ 
with an obvious notation, and $\hf_1 = \hf$.

By \cite[\S 5.1]{Pe07}, the morphism $\hf$ is $P_w$-equivariant. 
As a consequence, we have the equality of stabilizers 
$P_{w,\hx} = P_{w,wx} = P_w \cap w P^w w^{-1}$. 
Thus, $P_{w,\hx}$ contains the maximal torus $T$.
Also, note that $P_{w,\hx}$ is smooth and connected,
in view of the following result:

\begin{lemma}\label{lem:intersection}
Let $P, Q$ be two parabolic subgroups of $G$ containing the
maximal torus $T$. 

\begin{enumerate}

\item[{\rm (i)}] The (scheme-theoretic) intersection $P \cap Q$
is smooth and connected.

\item[{\rm (ii)}] Denote by $L$ (resp.~$M$) the Levi subgroup
of $P$ (resp.~$Q$) containing $T$. Then $P \cap Q$ has a 
Levi decomposition with Levi subgroup $L \cap M$.

\end{enumerate}

\end{lemma}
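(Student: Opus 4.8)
The plan is to describe both parabolics dynamically. Every parabolic subgroup of $G$ containing $T$ has the form $P(\lambda) := \{ g \in G ~\vert~ \lim_{t \to 0} \lambda(t) g \lambda(t)^{-1} \text{ exists} \}$ for some $\lambda \in X_*(T)$; its Levi subgroup containing $T$ is the centralizer $Z_G(\lambda)$, and one has the Levi decomposition $P(\lambda) = R_u(P(\lambda)) \rtimes Z_G(\lambda)$. All of this holds in arbitrary characteristic, which suits the standing hypotheses. So I would first fix $\lambda,\mu \in X_*(T)$ with $P = P(\lambda)$ and $Q = P(\mu)$, and set $L = Z_G(\lambda)$, $M = Z_G(\mu)$.

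The key step, and the one I expect to require the most care, is the identity
\[ P \cap Q = (R_u(P) \cap Q) \rtimes (L \cap Q). \]
To prove it, I would use the $T$-equivariant isomorphism of varieties $m : R_u(P) \times L \to P$, $(u,\ell) \mapsto u\ell$, where $T$ acts by conjugation on each factor (both $R_u(P)$ and $L$ being $T$-stable). Given $g = u\ell \in P$, conjugation by $\mu(t)$ (note $\mu(t) \in T$) gives $\mu(t) g \mu(t)^{-1} = (\mu(t) u \mu(t)^{-1})(\mu(t) \ell \mu(t)^{-1})$, which corresponds under $m^{-1}$ to the pair $(\mu(t) u \mu(t)^{-1}, \mu(t) \ell \mu(t)^{-1})$. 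Since $m$ is an isomorphism, the morphism $\bG_m \to P$, $t \mapsto \mu(t) g \mu(t)^{-1}$, extends to $0$ if and only if each of its two coordinate morphisms does; that is, $g \in Q = P(\mu)$ if and only if $u \in Q$ and $\ell \in Q$. The containment $\supseteq$ being clear, this proves the displayed identity. Here $R_u(P) \cap Q$ is normalized by $L \cap Q$, since $L \cap Q$ normalizes both $R_u(P)$ and $Q$, so the product is genuinely semidirect with $R_u(P) \cap Q$ normal.

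It then remains to analyze the two factors. The factor $L \cap Q = L \cap P(\mu)$ equals the dynamical parabolic $P_L(\mu) = \{ g \in L ~\vert~ \lim_{t \to 0}\mu(t) g \mu(t)^{-1} \text{ exists} \}$, a parabolic subgroup of the connected reductive group $L$; hence it is smooth and connected, with Levi subgroup $Z_L(\mu) = L \cap M$ and unipotent radical $R_u(P_L(\mu))$. Moreover $L \cap M = Z_G(\lambda) \cap Z_G(\mu) = Z_G(S)$, where $S \subset T$ is the subtorus generated by the images of $\lambda$ and $\mu$; as the centralizer of a torus in a connected reductive group, it is connected and reductive. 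The factor $R_u(P) \cap Q$ is a $T$-stable closed subgroup of the unipotent group $R_u(P)$, directly spanned by the root subgroups it contains, hence smooth, connected and unipotent. Assembling these, $P \cap Q$ is a semidirect product of smooth connected groups, so it is itself smooth and connected, which gives (i). Combining the two unipotent pieces yields the Levi decomposition $P \cap Q = \big[ (R_u(P)\cap Q) \rtimes R_u(P_L(\mu)) \big] \rtimes (L\cap M)$, whose Levi subgroup is $L\cap M$, proving (ii).

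As a consistency check one may verify directly that $\fp \cap \fq = \ft \oplus \bigoplus_{\beta \in R_P \cap R_Q} \fg_{\beta}$ (the root spaces being one-dimensional), which matches the Lie algebra of the group just constructed; I would mention this only in passing, since the dynamical argument already delivers smoothness and connectedness without a separate tangent-space or component-group computation. The only place demanding vigilance is the extension-of-limits claim in the second paragraph, which is exactly where the isomorphism $m$ is used, and where one must avoid conflating membership in $P(\mu)$ with the independent extension of the two coordinates.
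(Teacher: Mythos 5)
Your dynamical strategy is sound and genuinely different from the paper, whose proof is purely by citation: smoothness and connectedness in (i) from \cite[13.21, 14.22]{Borel}, and (ii) from \cite[Prop.~2.1]{DM}. Your limit-splitting through the isomorphism $m : R_u(P) \times L \to P$ is essentially how the modern dynamical treatments (SGA3; Conrad--Gabber--Prasad, \emph{Pseudo-reductive groups}, \S 2.1) establish these facts, and the splitting step itself is correct, modulo the routine remark that a curve $\bG_m \to G$ with image in a closed subgroup ($P$, $R_u(P)$ or $L$) whose limit exists in $G$ has its limit in that subgroup. The genuine gap is in how you discharge the word \emph{scheme-theoretic}. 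As literally written, your extension argument is carried out on $k$-points, so it only identifies the underlying reduced subgroup of $P \cap Q$; but over an algebraically closed field every reduced group scheme is automatically smooth, so the entire content of (i) is that the schematic intersection has no nilpotents --- something a points computation cannot see. Moreover, your stated reason for smoothness of the factor $R_u(P) \cap Q$ (``a $T$-stable closed subgroup of a unipotent group, directly spanned by the root subgroups it contains, hence smooth'') fails at the scheme level: in characteristic $p$ the Frobenius kernel $\alpha_p \subset \bG_a = U_{\beta}$ is a $T$-stable closed subgroup scheme that is not directly spanned by root subgroups, and the classical directly-spanned theorems in Borel or Humphreys presuppose a smooth connected subgroup, which is exactly what is to be proved.

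Both repairs are cheap and stay within your framework. Either promote the limit argument to $A$-points for an arbitrary $k$-algebra $A$: since $\bG_m$ is schematically dense in $\bA^1$, the extension of a curve landing in a closed subscheme lands in that subscheme, so $L \cap Q = P_L(\mu)$ and $R_u(P) \cap Q = P_{R_u(P)}(\mu)$ hold as equalities of subgroup \emph{schemes}; then invoke that for any smooth affine group $H$ with $\bG_m$-action the dynamic subgroup $P_H(\mu)$ is smooth, and connected when $H$ is (Conrad--Gabber--Prasad, Prop.~2.1.8), applied to $H = L$ and $H = R_u(P)$ --- no root-group spanning is needed. Or keep the points-level argument and make your ``consistency check'' load-bearing rather than optional: $\Lie(P \cap Q) = \fp \cap \fq = \ft \oplus \bigoplus_{\beta} \fg_{\beta}$, the sum over roots $\beta$ of both $P$ and $Q$, has dimension equal to that of the reduced semidirect product you constructed, and the equality $\dim \Lie(P\cap Q) = \dim (P \cap Q)_{\red}$ \emph{is} the smoothness assertion. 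Your closing sentence inverts this logic: under the points reading, the tangent-space count is not a check but the proof of (i). With either repair in place, your assembly of (ii), with unipotent radical $(R_u(P) \cap Q) \rtimes R_u(P_L(\mu))$ and Levi subgroup $L \cap M = Z_G(\lambda) \cap Z_G(\mu)$, goes through as you indicate.
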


\begin{proof}
(i) The smoothness of $P \cap Q$ follows from 
\cite[13.21]{Borel}, and the connectedness from [loc.~cit., 14.22].

(ii) This is a consequence of \cite[Prop.~2.1]{DM}.
\end{proof}

\subsection{Structure of minimal families}
\label{subsec:structure}

We still consider a good generalized reduced decomposition  
$\hw = (w_1, \ldots,w_m)$ of a minuscule element $w \in W$, 
and set $P := P^w$. Also, we choose reduced decompositions
\[ \tw_i = (s_{i,1}, \ldots, s_{i,\ell_i}) \]
of $w_i$ for $1 \leq i \leq m$. This yields a reduced
decomposition of $w$ by concatenation, and hence
a Bott-Samelson variety $\tX(\tw)$. By using \cite[\S 5.3]{Pe07},
we obtain a commutative diagram of pointed varieties
\[ \xymatrix{
(\tX(\tw),\tx) \ar[r]^{\tpi} \ar[d]_{\tf}  &
(\hX(\hw),\hx) \ar[r]^{\hpi}  \ar[d]^{\hf} &
(X(w),wx) \\
(\tX(\tw_1),\tx_1) \ar[r]^-{\tpi_1} & (X(w_1),w_1 x_1), \\
}\]
where the horizontal arrows induce local isomorphisms 
at the corresponding base points, and the
vertical arrows are locally trivial fibrations; moreover,
the composition $\hpi \circ \tpi$ is the Bott-Samelson
resolution $\pi : \tX(\tw) \to X(w)$.

By arguing as in the proof of Lemma \ref{lem:bott},
one checks that $\tpi$ and $\hpi$ induce 
isomorphisms on $T$-stable curves through the respective
base points. Thus, we may index the $T$-stable curves
through $\hx$ in $\hX(\hw)$ as $\hC_{i,j}$, where
$1 \leq i \leq \ell_j$ and $1 \leq j \leq m$. 
Note that $\hf$ contracts all the $\hC_{i,j}$ with $j \geq 2$,
and sends each $\hC_{i,1}$ isomorphically to  
$C_{w_1,-\beta_i}$; in particular, $\hf$ yields a bijection
from $\{ \hC_{1,1}, \ldots, \hC_{\ell_1,1} \}$ to the set of
$T$-stable curves through $w_1 x_1$ in $X(w_1)$.
Also, every minimal family on $\hX(\hw)$ contains some 
$\hC_{i,j}$, as follows from Borel's fixed point theorem.

We now assume that $\hX(\hw)$ is smooth; equivalenty,
$X(w_i)$ is smooth for $i = 1, \ldots, m$. Then each
$\hC_{i,j}$ is an embedded free rational curve (Lemma 
\ref{lem:smooth}). Let $\cK = \cK_{i,j}$ be the family of 
rational curves on $\hX(\hw)$ that contains $\hC_{i,j}$; 
then $\cK$ is covering in view of Lemma \ref{lem:covering}. 
If $j \geq 2$ then by Lemma \ref{lem:contraction}, 
there exists a unique covering family $\cL$ of rational curves 
on $\hY := \hX(w_2,\ldots, w_m)$ such that 
$\cK_{\hx} = \cL_{\hy}$, where $\hy$ denotes the
base point of $\hY$; the above isomorphism is 
$T$-equivariant, where the $T$-action on $\hY$
is twisted by $w_1$. Arguing by induction on $m$, 
we may thus reduce to the case where $j = 1$.

Assume in addition that $G$ is simply-laced and
$w_1,\ldots,w_m$ are minuscule; then each $X(w_i)$ 
is a smooth Schubert variety in the minuscule homogeneous
space $G/P^{w_i}$, and hence is a minuscule homogeneous
space as well (see \cite[Prop.~3.3]{BP}). By combining 
Lemma \ref{lem:rational}, Remark \ref{rem:rational} 
and Proposition \ref{prop:long}, we obtain 
two $P_{w,\hx}$-equivariant rational maps 
\begin{equation}\label{eqn:rational}
\hpi_* : \cK_{\hx} \dasharrow \cL(w)_{wx}, \quad 
\hf_* : \cK_{\hx} \dasharrow \cL(w_1)_{w_1 x_1}, 
\end{equation}
where $\cL(w)$ is a family of lines in $X(w)$, and $\cL(w_1)$
the family of all lines in $X(w_1)$. 

We may now obtain a qualitative analogue of the description 
of minimal families in minuscule varieties (Proposition 
\ref{prop:long} and Remark \ref{rem:simplylaced}):

\begin{proposition}\label{prop:isom}
Assume that $G$ is simply-laced. Let $w \in W$ be a minuscule
element, and $\hw = (w_1,\ldots,w_m)$ a good generalized reduced 
decomposition of $w$, where $w_1,\ldots,w_m$ are minuscule and 
$X(w_1),\ldots,X(w_m)$ are smooth. Let $\cK$ be a minimal family 
on $\hX(\hw)$.

\begin{enumerate}

\item[{\rm (i)}] $\cK_{\hx}$ consists of embedded free curves.

\item[{\rm (ii)}] The rational maps (\ref{eqn:rational}) are immersions.

\item[{\rm (iii)}] $\cK_{\hx}$ is a minuscule homogeneous space.

\item[{\rm (iv)}] The tangent map (\ref{eqn:tangent})
yields an immersion of $\cK_{\hx}$ into $\bP(T_{\hx} \hX(\hw))$.

\end{enumerate}

\end{proposition}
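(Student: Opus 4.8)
The plan is to reduce everything to the case $j=1$ already set up in the discussion preceding the statement, and then to exploit the structure maps $\hpi$ and $\hf$ together with Proposition~\ref{prop:long}. By the inductive reduction via Lemma~\ref{lem:contraction} (replacing $\hX(\hw)$ by the fiber $\hY = \hX(w_2,\ldots,w_m)$ with its $w_1$-twisted $T$-action when the distinguished $T$-fixed curve $\hC_{i,j}$ in $\cK_{\hx}$ has $j\geq 2$), I may assume that $\cK_{\hx}$ contains some $\hC_{i,1}$. Since $X(w_1),\ldots,X(w_m)$ are smooth minuscule Schubert varieties, they are minuscule homogeneous spaces, so each $\hC_{i,j}$ is an embedded free curve by Lemma~\ref{lem:smooth}; this gives (i) immediately and guarantees that the rational maps $\hpi_*$ and $\hf_*$ of (\ref{eqn:rational}) are defined at the relevant curves.

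The heart of the argument is (ii). First I would analyze $\hf_*$: because $\hf$ is a locally trivial fibration with fiber a minuscule homogeneous space and $\hf$ sends each $\hC_{i,1}$ isomorphically onto a $T$-stable line $C_{w_1,-\beta_i}$ in $X(w_1)$, the map $\hf_*$ lands in the family $\cL(w_1)_{w_1 x_1}$ of \emph{all} lines through $w_1 x_1$ in the minuscule variety $X(w_1)=G_{w_1}/(P^{w_1}\cap G_{w_1})$. By Proposition~\ref{prop:long}(ii), the tangent map on $\cL(w_1)_{w_1 x_1}$ is already an immersion; combining this with the commutative square (\ref{eqn:tangentbis}) relating $\hf_*$ to the differential $d\hf_{\hx}$ on projectivized tangent spaces, I can control the tangent behavior of $\hf_*$. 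The point is that $d\hf_{\hx}$ restricted to the horizontal tangent directions (those coming from the $\hC_{i,1}$) is injective, so $\hf_*$ is at least generically an immersion; since $\cK_{\hx}$ is a single $P_{w,\hx}$-orbit closure (once homogeneity is in hand), an immersion on one orbit is an immersion everywhere by equivariance. Similarly, $\hpi_*$ is an immersion because $\hpi$ induces a local isomorphism at the base points (so $d\hpi_{\hx}$ is an isomorphism near $\hx$), and $\cL(w)_{wx}$ is a family of lines in the minuscule variety $X(w)$, again covered by Proposition~\ref{prop:long}(ii).

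For (iii) I would argue homogeneity directly. The group $P_{w,\hx}$, which is smooth and connected (Lemma~\ref{lem:intersection}) and contains $T$, acts on the projective variety $\cK_{\hx}$; by Borel's fixed point theorem it fixes some $\hC_{i,1}$, and I claim the orbit of that curve is already all of $\cK_{\hx}$. The cleanest route is to transport the homogeneity of $\cL(w_1)_{w_1 x_1}$—which is the minuscule variety $L_{w_1}X(s_{\alpha})$ by Proposition~\ref{prop:long}(i)—back through $\hf_*$. Since $\hf_*$ is an immersion (from (ii)) and is $P_{w,\hx}$-equivariant, and since $P_{w,\hx}$ surjects onto a subgroup of the automorphisms acting transitively on the image, a dimension count using $\dim\cK_{\hx} = -K_{\hX(\hw)}\cdot\hC_{i,1}-2$ (from Lemma~\ref{lem:smooth}(iii)) matched against $\dim$ of the relevant $L$-orbit should force $\cK_{\hx}$ to coincide with a single orbit closure, hence to be homogeneous; minusculeness then follows from the minuscule structure on $\cL(w_1)_{w_1 x_1}$ as described in Remark~\ref{rem:simplylaced}(i). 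Finally, (iv) is a formal consequence of (ii) and (iii): the tangent map $\tau$ on $\cK_{\hx}$ factors compatibly through $\hpi_*$ (or $\hf_*$) via (\ref{eqn:tangentbis}), and since that map is an immersion into a projectivized tangent space where the line family already embeds, $\tau$ is an immersion as well.

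The main obstacle I anticipate is (ii), specifically ensuring that $\hf_*$ (or $\hpi_*$) is an immersion on \emph{all} of $\cK_{\hx}$ rather than merely generically. A priori these maps are only rational, defined where curves go to free curves; injectivity of the differential is clear at a free embedded curve but one must rule out the possibility that $\cK_{\hx}$ has extra directions collapsed by $d\hf_{\hx}$ along the vertical (fiber) tangent space. The resolution should come from the equivariance: $P_{w,\hx}$ acts transitively on a dense open subset (by the homogeneity argument), so an immersion on that open set extends to an immersion globally, but making this rigorous requires knowing homogeneity and the immersion property in the right logical order. I would therefore prove a weaker generic statement first, use it to establish (iii), and then bootstrap back to the full immersion claim in (ii) and (iv).
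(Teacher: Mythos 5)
There is a genuine gap, and you have half-diagnosed it yourself: your argument for (ii) needs homogeneity of $\cK_{\hx}$ (to propagate a generic immersion by equivariance), while your argument for (iii) needs the immersion, and the promised ``weaker generic statement'' that would break this circle is never supplied. Worse, the mechanisms you propose do not work at the level of generality of this proposition. Your dimension count matching $\dim \cK_{\hx} = -K_{\hX(\hw)}\cdot \hC_{i,1} - 2$ against the dimension of $\cL(w_1)_{w_1 x_1}$ is the mechanism of Proposition \ref{prop:final}, which requires the \emph{extra} hypothesis that the inclusion (\ref{eqn:incl}) induces an equality of Levi subgroups; without it, the curves $\hC_{1,1},\ldots,\hC_{\ell_1,1}$ need not all lie in $\cK_{\hx}$, $\hf_*$ need not be dominant, and the image of $\cK_{\hx}$ may be a proper Schubert subvariety of $\cL(w_1)_{w_1 x_1}$ --- so nothing in your sketch forces homogeneity, and ``a single orbit closure'' is not automatically homogeneous. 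Relatedly, in (i) you claim embeddedness of \emph{all} curves in $\cK_{\hx}$ ``immediately'': Lemma \ref{lem:smooth} gives freeness of all curves and embeddedness only of the $T$-stable ones; you still need that the non-embedded locus is closed and $T$-stable, hence would contain a $T$-fixed point if nonempty --- the same argument is also needed to show $\hf_*$ and $\hpi_*$ are defined everywhere, not just at the $T$-fixed curves.

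The paper breaks the circle with two ideas absent from your proposal, and with no tangent-map analysis at all. For (ii), it first shows $\hf_*$ is a \emph{finite morphism}: the fiber through $\hC_{i,1}$ has a unique $T$-fixed point, hence is a single point by the $T$-equivariant projective embedding argument of Theorem \ref{thm:bott} (via \cite[Prop.~13.5]{Borel}), and upper semicontinuity of fiber dimension then bounds all fibers. The immersion property follows by the argument of Proposition \ref{prop:linesbis}: a finite morphism equivariant under the Borel subgroup $w B_L w^{-1}$ of $w L w^{-1}$, landing in the flag variety $\cL_{wx}$, has image a Schubert subvariety and is birational onto it (finitely many Borel orbits with smooth connected stabilizers), hence an isomorphism onto its image by normality of Schubert varieties --- homogeneity of $\cK_{\hx}$ never enters. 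Then (iii) is \emph{deduced} from (ii): $\cK_{\hx}$ is smooth because $\cU_{\hx}$ is smooth and $\rho_{\hx}$ restricts to an isomorphism on embedded free curves (Lemma \ref{lem:em} --- this is where the full strength of (i) is used), so $\hpi_*$ realizes $\cK_{\hx}$ as a \emph{smooth} Schubert subvariety of the minuscule space $\cL_{wx}$, which is minuscule homogeneous by \cite[Prop.~3.3]{BP} --- the citation your proposal never makes for $\cK_{\hx}$. Finally, in (iv) your parenthetical ``(or $\hf_*$)'' is wrong: only the $\hpi$-route works, since $d\hpi_{\hx}$ is an isomorphism whereas $d\hf_{\hx}$ annihilates the vertical tangent directions, so the diagram through $\hf$ yields no injectivity.
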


\begin{proof}
(i) By Lemma \ref{lem:smooth}, every curve in $\cK_{\hx}$ is free. 
Also, recall that being embedded is an open property, invariant
under the $T$-action. Since every $T$-fixed curve in $\cK_{\hx}$
is embedded, this yields the assertion.

(ii)  In view of Lemma \ref{lem:rational}, $\hf_*$ 
is defined at any $T$-fixed point, and hence everywhere 
by the above argument. Also, $\hC_{i,1}$ is the unique 
$T$-fixed point of its fiber under $\hf_*$. By arguing as in 
the proof of Theorem \ref{thm:bott}, it follows that this fiber 
consists of a unique point. Thus, all the fibers of $\hf_*$ are 
finite by upper semi-continuity of the dimensions of fibers. 
So $\hf_*$ is a finite morphism.

Viewing $\hpi$ as a morphism to $G/P$ and adapting 
the arguments of the above paragraph, we see that
 $\hpi_*$ is a finite morphism as well.
Its image is contained in the variety $\cL_{wx}$ of lines
in $G/P$ through $wx$, which is a minuscule homogeneous space 
under $w L w^{-1}$ (Remark \ref{rem:simplylaced}).
We now use the equivariance of $\hpi_*$ under
$P_{w,\hx} = P_{w,wx} = P_w \cap w P w^{-1}$, and hence
under $w B_L w^{-1}$, a Borel subgroup of $w L w^{-1}$.
The image of $\hpi_*$ is a Schubert subvariety of $\cL_{wx}$
with respect to this Borel subgroup. By arguing as in the proof
of Proposition \ref{prop:linesbis}, it follows that $\hpi_*$ is 
an immersion. Likewise, $\hf_*$ is an immersion as well.

(iii) Consider the universal family $\rho: \cU \to \cK$. Then
$\cU_{\hx}$ is smooth by (i) and Lemma \ref{lem:smooth}.
Using (i) again and Lemma \ref{lem:em}, it follows that 
$\cK_{\hx}$ is smooth as well. So $\cK_{\hx}$ is isomorphic 
to a smooth Schubert variety in the minuscule homogeneous 
space $\cL_{wx}$. This implies the statement in view of 
\cite[Prop.~3.3]{BP}.

(iv) By (i) and (\ref{eqn:tangentbis}), we have 
a commutative diagram
\[ \xymatrix{
\cK_{\hx} \ar[r] \ar[d]_{\hpi_*} & \bP(T_{\hx} \hX(\hw)) 
\ar[d]^{d\hpi_{\hx}} \\
\cL(w)_{wx} \ar[r] & \bP(T_{wx} X(w)), \\
} \]
where the horizontal arrows are the tangent maps.
Moreover, $\hpi_*$ is an immersion,
$d\hpi_{\hx}$ is an isomorphism, and the bottom
horizontal arrow is an immersion as well by Proposition 
\ref{prop:long}. This yields the assertion.
\end{proof}

Next, we obtain a more quantitative version of Proposition
\ref{prop:isom} under additional assumptions. We will need
the following observation:

\begin{lemma}\label{lem:inc}
If $X(w)$ is smooth, then $G_w \subset P_w$.
\end{lemma}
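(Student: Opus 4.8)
The plan is to reduce to the case of full support and then read off the conclusion from the homogeneity of smooth minuscule Schubert varieties recalled just before the lemma, via the behaviour of supports under length-additive factorizations in $W$.

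First I would reduce to the case $\Supp(w) = S$. By (\ref{eqn:schubert}), $X(w)$ is a Schubert variety in the flag variety $G_w/(P^w \cap G_w)$ of the semisimple group $G_w$, and the reduced stabilizer of $X(w)$ in $G_w$ is exactly $P_w \cap G_w$: indeed $P_w \cap G_w$ clearly stabilizes $X(w)$, while any $g \in G_w$ stabilizing $X(w)$ lies in $P_w$ by definition of $P_w$, hence in $P_w \cap G_w$. Since $\Supp(w)$ is the set of simple roots of $G_w$, after replacing $G$ by $G_w$ I may assume $\Supp(w) = S$ and $G_w = G$; the desired inclusion $G_w \subseteq P_w$ then becomes $P_w = G$, equivalently $\Supp(w) \subseteq I_w$.

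Next I would exploit smoothness. As $w$ is minuscule, the criterion recalled above (from \cite[Prop.~3.3]{BP}) shows that $X(w)$ is homogeneous under $P_w = P_{I_w}$. Since $e \leq w$, the base point $x$ lies in $X(w)$, and homogeneity forces $X(w) = P_{I_w} \cdot x = P_{I_w}/(P_{I_w} \cap P^w)$, the closed $P_{I_w}$-orbit of the base point. The open $B$-orbit $Bwx$ of $X(w)$ must then coincide with the open $B$-orbit of $P_{I_w}/(P_{I_w} \cap P^w)$, whose unique $T$-fixed point is the image of the longest element $w_{0,I_w}$ of $W_{I_w}$. Hence $w$ is the minimal representative of the coset $w_{0,I_w} W_{I^w}$ in $W^{I^w}$; in particular $w_{0,I_w} = w\,c$ for some $c \in W_{I^w}$ with $\ell(w_{0,I_w}) = \ell(w) + \ell(c)$.

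Finally, since this factorization is length-additive, concatenating reduced words for $w$ and $c$ produces a reduced word for $w_{0,I_w}$, so $\Supp(w) \subseteq \Supp(w_{0,I_w}) = I_w$. Under the reduction $\Supp(w) = S$, this yields $I_w = S$, hence $P_w = G = G_w$, which is the claim. The step I expect to require the most care is the passage from smoothness to the statement that $w$ is the longest coset representative: it rests on the homogeneity input and on correctly identifying the open $B$-orbit, equivalently the big-cell $T$-fixed point, of the $P_{I_w}$-orbit $P_{I_w}/(P_{I_w} \cap P^w)$. Once this is in place the support computation is immediate; the only other point needing attention is the stabilizer identification in the reduction, which is exactly where (\ref{eqn:schubert}) is used.
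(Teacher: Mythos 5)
Your proof is correct and takes essentially the same route as the paper's: smoothness together with \cite[Prop.~3.3]{BP} gives $X(w) = P_w x$, and a length-additive coset factorization in $W$ then yields $\Supp(w) \subseteq I_w$, hence $G_w \subseteq P_w$. The only differences are cosmetic: your initial reduction to $\Supp(w) = S$ is superfluous (the inclusion $\Supp(w) \subseteq I_w$ already gives the claim directly), and the paper reaches the factorization more quickly from $wx \in P_w x$, writing $w = uv$ with $u \in W_{I_w}$ and $v \in W_{I^w}$, without needing to identify the $T$-fixed point of the open cell with $w_{0,I_w}x$.
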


\begin{proof}
By \cite[Prop.~3.3]{BP} and the smoothness assumption, 
$X(w)$ is a unique orbit of $P_w$. Since $x \in X(w)$,
it follows that $X(w) = P_w x$. Thus, we have $w = u v$ 
for some $u \in W_{I_w}$ and $v \in W_{I^w}$. Using a reduced 
decomposition of $w$ and 
the fact that $w \in W^{I^w}$, it follows that $w \in W_{I_w}$. 
Therefore, $\Supp(w) \subset I_w$. This completes the proof.
\end{proof}

Since $X(w_1)$ is smooth, we have $G_{w_1} \subset P_{w_1}$ 
by the above lemma. In view of (\ref{eqn:sub}), it follows that
$G_{w_1}$ is a subgroup of $P_w \cap G_w$; also, 
$\hf : \hX(\hw) \to X(w_1)$ is clearly equivariant under this subgroup, 
and sends $\hx$ to $w_1 x_1$. Thus, we have the inclusion of stabilizers
\begin{equation}\label{eqn:incl}
G_{w_1,\hx} \subset G_{w_1,w_1 x_1}.
\end{equation}
Moreover, 
$G_{w_1,\hx} = G_{w_1} \cap P^w_{wx} = G_{w_1} \cap w P^w w^{-1}$ 
is the intersection of two parabolic subgroups of $G_{w_1}$;
both contain $T_1 := T \cap G_{w_1}$ as a maximal torus.
By Lemma \ref{lem:intersection}, it follows that 
$G_{w_1,\hx}$ is smooth, connected,
and admits a Levi subgroup containing $T_1$; all these properties 
also hold for $G_{w_1,w_1 x_1}$. We may now state our result:

\begin{proposition}\label{prop:final}
Let $G$ be a simply-laced semi-simple algebraic group,
$w \in W$ a minuscule element, and $\hw =(w_1,\ldots,w_m)$ 
a good generalized reduced decomposition of $w$.
Assume that $w_1,\ldots,w_m$ are minuscule, 
$X(w_1), \ldots, X(w_m)$ are smooth, and the inclusion
(\ref{eqn:incl}) induces an equality of Levi subgroups
containing $T_1$.
Let $\cK$ be a family of rational curves on $\hX(\hw)$
containing a $T$-stable curve $\hC$ which is not contracted 
by $\hf$. Then every $\alpha \in S$ such that
$w_1^{-1}(\alpha) \in R^-$ satisfies $w^{-1}(\alpha) \in R^-$ 
and
\begin{equation}\label{eqn:dimension} 
\he( -w^{-1}(\alpha) ) \geq  \he( -w_1^{-1}(\alpha)). 
\end{equation}
Moreover, equality holds for some $\alpha$ as above 
if and only if $\cK$ is minimal, and then the morphisms
$\hpi_* : \cK_{\hx} \to \cL(w)_{wx}$, 
$\hf_*: \cK_{\hx} \to \cL(w_1)_{w_1 x_1}$ are isomorphisms.
\end{proposition}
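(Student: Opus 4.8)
The plan is to deduce all four assertions from a single geometric input — the dominance of the rational map $\hf_* : \cK_{\hx} \dashrightarrow \cL(w_1)_{w_1 x_1}$ of (\ref{eqn:rational}) — together with an anticanonical dimension count. The descent statement is elementary: writing $w = w_1 w'$ with $w' = w_2 \cdots w_m$ and $\ell(w) = \ell(w_1) + \ell(w')$, the inversion sets $N(v) := \{ \beta \in R^+ : v(\beta) \in R^- \}$ obey $N(w) = N(w') \sqcup w'^{-1}(N(w_1))$. If $w_1^{-1}(\alpha) \in R^-$, put $\gamma := - w_1^{-1}(\alpha) \in N(w_1)$; then $w'^{-1}(\gamma) \in N(w) \subset R^+$, whence $w^{-1}(\alpha) = - w'^{-1}(\gamma) \in R^-$. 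The inequality (\ref{eqn:dimension}) will instead fall out of the geometry.

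Next I would compute dimensions. Let $\hC = \hC_\alpha$ be the block-$1$ curve of weight $\alpha$, so that $\hf$ maps it isomorphically onto the line $C_{w_1,-\alpha} \subset X(w_1)$. The tangent weights of $\hX(\hw)$ at $\hx$ are the roots in $R^+ \cap w(R^-)$, with sum $\rho - w(\rho)$; splitting $T_{\hX(\hw)}|_{\hC}$ equivariantly along the $\SL(2)$-orbit $\hC$ exactly as in the proof of Lemma \ref{lem:bott}(iv) gives $-K_{\hX(\hw)} \cdot \hC = \langle \rho - w(\rho), \alpha^\vee \rangle = 1 + \he(-w^{-1}(\alpha))$. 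Since $\hC$ is free (Lemma \ref{lem:smooth}), $\dim \cK_{\hx} = - K_{\hX(\hw)} \cdot \hC - 2 = \he(-w^{-1}(\alpha)) - 1$. Running the identical computation inside the smooth minuscule space $X(w_1)$, where $C_{w_1,-\alpha}$ is a line and Proposition \ref{prop:long} applies, yields $\dim \cL(w_1)_{w_1 x_1} = \he(-w_1^{-1}(\alpha)) - 1$.

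Now for dominance and the inequality. The map $\hf_*$ is $G_{w_1,\hx}$-equivariant by (\ref{eqn:incl}), while $\cL(w_1)_{w_1 x_1}$ is homogeneous under the Levi subgroup $M$ of $G_{w_1, w_1 x_1}$ (Proposition \ref{prop:long}). The hypothesis that (\ref{eqn:incl}) induces an equality of Levi subgroups gives $M \subset G_{w_1,\hx}$; as $\hf_*$ is defined at the $T$-fixed point $\hC$ (Remark \ref{rem:rational}), $M$-equivariance forces its image to contain $M \cdot \hf(\hC) = \cL(w_1)_{w_1 x_1}$, so $\hf_*$ is surjective. Hence $\dim \cK_{\hx} \ge \dim \cL(w_1)_{w_1 x_1}$, which by the previous paragraph is precisely (\ref{eqn:dimension}); applying this to the curve $\hC_\alpha$ attached to each admissible $\alpha$ gives the inequality for all of them, and equality holds exactly when $\hf_*$ is generically finite. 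If $\cK$ is minimal, Proposition \ref{prop:isom}(ii) makes $\hf_*$ an immersion, and a surjective immersion onto $\cL(w_1)_{w_1 x_1}$ is an isomorphism, so equality holds; the same proposition then makes $\hpi_*$ an immersion, and arguing as in the proof of Proposition \ref{prop:linesbis} with $P_{w,\hx} = P_w \cap w P w^{-1}$ acting, its image is the Schubert subvariety $\cL(w)_{wx}$ and $\hpi_*$ is an isomorphism onto it. Since a minimal $\cK_{\hx}$ is isomorphic to $\cL(w_1)_{w_1 x_1}$ it contains every block-$1$ curve, so equality indeed holds for a simple $\alpha$.

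The remaining and hardest implication is \emph{equality} $\Rightarrow$ \emph{minimality}, i.e. that the surjective, generically finite $\hf_*$ forces $\cK_{\hx}$ to be complete. A priori $\cK_{\hx}$ is only quasi-projective, so the ``unique $T$-fixed point in each fibre, hence a single point'' device used for Bott--Samelson varieties (Theorem \ref{thm:bott}) does not apply verbatim, since that argument needs a complete fibre to locate a $T$-fixed point. My plan is to use instead the $T$-equivariant quasi-finite morphism $\hpi_* : \cK_{\hx} \to \cL_{wx}$ into the \emph{projective} minuscule homogeneous space of lines in $G/P$ through $wx$, and to copy the proof of Proposition \ref{prop:linesbis}: the image is a Schubert subvariety $S = \cL(w)_{wx}$, and $\hpi_*$ is birational onto the normal variety $S$, hence an open immersion. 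To close the gap I would show $\cK_{\hx}$ contains all $T$-fixed points of $S$ — its $T$-fixed points are the block-$1$ curves $\hC_{i,1}$, which in the equality case $\hf_*$ matches with all $T$-stable lines of $\cL(w_1)_{w_1 x_1}$ and $\hpi_*$ carries onto the $T$-fixed points of $S$ — so that $S \setminus \cK_{\hx}$, being closed and $T$-stable in the projective $S$, is empty, giving $\cK_{\hx} \simeq S$ and hence minimality. I expect the delicate bookkeeping to be precisely this matching of the $T$-fixed points of $\cL(w)_{wx}$ with the block-$1$ curves $\hC_{i,1}$, where the descent combinatorics of the first paragraph re-enters.
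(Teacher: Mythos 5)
Your forward half follows the paper's route, but it contains a gap at the very first step: you set ``$\hC = \hC_\alpha$, the block-$1$ curve of weight $\alpha$,'' whereas the hypothesis only provides a $T$-stable curve $\hC = \hC_{i,1}$ whose weight $\beta_i \in R^+ \cap w_1(R^-)$ need \emph{not} be simple. For non-simple $\beta_i$ the group $G_{\beta_i}$ does not act on $\hX(\hw)$ (only $P_w$ does), so your $\SL(2)$-splitting computation of $-K_{\hX(\hw)} \cdot \hC$ does not apply to the given curve. The missing step --- and the place where the Levi-equality hypothesis on (\ref{eqn:incl}) actually does its work in the paper --- is this: $\cL(w_1)_{w_1 x_1}$ is a minuscule homogeneous space under the common Levi $G_1$, so the $T$-stable curves through $w_1 x_1$ form a single $W_1$-orbit; since $\cK_{\hx}$ is $G_1$-stable, it therefore contains \emph{all} the block-$1$ curves $\hC_{1,1}, \ldots, \hC_{\ell_1,1}$, in particular $\hC_{1,1} = G_\alpha \hx$ for any reduced word of $w_1$ beginning with $s_\alpha$, and anticanonical degree is constant within the family. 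Only then does $\dim(\cK_{\hx}) = \he(-w^{-1}(\alpha)) - 1$ hold for the given $\cK$ and for \emph{every} admissible $\alpha$ simultaneously. Your substitute --- applying the inequality ``to the curve $\hC_\alpha$ attached to each admissible $\alpha$'' --- does prove the inequality (\ref{eqn:dimension}) (it is $\cK$-independent), but it breaks the biconditional: equality for some $\alpha'$ would then only yield minimality of the family \emph{through} $\hC_{\alpha'}$, not of the given $\cK$; you need the transitivity argument to know these families coincide. (Your inversion-set proof that $w_1^{-1}(\alpha) \in R^-$ implies $w^{-1}(\alpha) \in R^-$ is fine, and marginally more explicit than the paper's device of prepending $s_\alpha$ to a reduced word.)

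The converse implication (equality $\Rightarrow$ minimality) is where your proposal genuinely fails, and by your own admission remains a sketch. Two holes: (a) you treat $\hpi_* : \cK_{\hx} \to \cL_{wx}$ as an everywhere-defined quasi-finite morphism, but before minimality is known $\hpi_*$ is only a rational map; the paper's definedness argument (proof of Proposition \ref{prop:isom}(ii)) locates a $T$-fixed point in the closed locus of non-definedness via Borel's theorem, which requires $\cK_{\hx}$ to be projective --- precisely what you are trying to prove, so the circularity you correctly diagnosed for the fibre argument of Theorem \ref{thm:bott} afflicts your own route as well. (b) The ``matching of $T$-fixed points of $S$ with the block-$1$ curves,'' which you defer as bookkeeping, is not routine: $T$-fixed points of the closure $S$ may be limit lines not arising from curves of $\cK_{\hx}$, and nothing in your sketch excludes them. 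The paper's converse is different and closes in one stroke: equality makes $\hf_*$ a dominant, generically finite rational map that is $G_1$-equivariant onto the $G_1$-homogeneous space $\cL(w_1)_{w_1 x_1} \simeq G_1/Q$; its domain of definition $U$ is open, non-empty and $G_1$-stable, hence of the form $G_1 \times^Q F$ with $F$ a finite scheme on which the connected parabolic $Q$ acts, necessarily trivially, so irreducibility of $\cK_{\hx}$ forces $F$ to be a single point; thus $U \simeq G_1/Q$ is projective, hence closed as well as open in $\cK_{\hx}$, so $U = \cK_{\hx}$, giving at once projectivity (i.e.\ minimality) and that $\hf_*$ is an isomorphism. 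This homogeneity argument --- what the paper's ``using $G_1$-equivariance and homogeneity once more'' abbreviates --- is the idea your proposal is missing; it replaces entirely the $\hpi_*$-image and fixed-point-matching machinery you left unfinished.
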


\begin{proof}
We may choose a reduced decomposition
$(s_{1,1}, \ldots, s_{1,\ell_1})$ of $w_1$ such that
$\alpha = \alpha_{1,1}$. Then $\hC = \hC_{i,1}$
where $1 \leq i \leq \ell_1$.

Denote by $G_1 \supset T_1$ the common Levi
subgroup to $(P_{w_1} \cap G_{w_1})_{\hx}$ and
$(P_{w_1} \cap G_{w_1})_{w_1 x_1}$. Since $X(w_1)$ 
is a minuscule variety under $P_{w_1} \cap G_{w_1}$, 
it follows from Proposition \ref{prop:long} and Remark 
\ref{rem:simplylaced} (i) that $\cL(w_1)_{w_1x_1}$ is 
a minuscule homogeneous space under $G_1$. Therefore, 
the $T$-stable curves through $w_1 x_1$ in $X(w_1)$ 
form a unique orbit of the Weyl group $W_1$ of 
$(G_1,T_1)$. So the same holds for the $T$-stable curves 
$\hC_{1,1}, \ldots, \hC_{\ell_1,1}$. Since $\cK_{\hx}$ is 
stable under $G_1$, it contains all the latter curves.

As a consequence, we have 
\[ \dim(\cK_{\hx}) = -K_{\hX(\hw)} \cdot \hC_{i,1} - 2
= -K_{\hX(\hw)} \cdot \hC_{1,1} - 2. \]
We now determine $-K_{\hX(\hw)} \cdot \hC_{1,1}$. 
Note that the morphism 
$\tpi: \tX(\tw) \to \hX(\hw)$ restricts to an isomorphism 
over the open orbit of the minimal parabolic subgroup
$P_{\alpha}$ (a subgroup of $P_w$) 
in $\hX(\hw)$; in particular, $\tpi$ restricts to an 
isomorphism from $\tC_1 = G_{\alpha}\tx$ to 
$\hC_{1,1}$. This yields the equality
$-K_{\hX(\hw)} \cdot \hC_{1,1} = -K_{\tX(\tw)} \cdot \tC_1$.
By using Lemma \ref{lem:bott} (iii), this yields in turn
\[ \dim(\cK_{\hx}) = \he( -w^{-1}(\alpha^{\vee}) ) -1. \] 
The rational map $\hf_*$ is $G_1$-equivariant, and hence
dominant as $\cL(w_1)_{w_1 x_1}$ consists of a unique orbit 
of $G_1$. Thus, 
\[ \dim(\cK_{\hx}) \geq \dim(\cL(w_1)_{w_1 x_1}). \] 
But we have 
\[ \dim(\cL(w_1)_{w_1 x_1}) = \he(-w_1^{-1}(\alpha^{\vee})) -1 \]
as follows by arguing as above with the morphism 
$\tpi_1 : \tX(\tw_1) \to X(w_1)$. This proves the inequality 
(\ref{eqn:dimension}). 

If the family $\cK$ is minimal, then $\hf_*$ is an
immersion (Proposition \ref{prop:isom}). It follows 
that equality holds in (\ref{eqn:dimension}) and $\hf_*$ 
is surjective. Also, $\hpi_*$ is an immersion (Proposition
\ref{prop:isom} again), and is birational in view of 
Remark \ref{rem:rational}; thus, $\hpi_*$ is an isomorphism.

Conversely, assume that equality holds in 
(\ref{eqn:dimension}); then the dominant rational map 
$\hf_*$ is generically finite. Using $G_1$-equivariance 
and homogeneity of $\cL(w_1)_{w_1 x_1}$ once more, 
it follows that $\hf_*$ is an isomorphism. In particular, 
$\cK_{\hx}$ is projective, i.e., $\cK$ is minimal.
\end{proof}

The assumptions of the above proposition hold for the 
generalized Bott-Samelson resolutions obtained by
Construction 1 of Perrin (see \cite[\S 5.4]{Pe07}; these yield 
all small resolutions of $X(w)$ in view of [loc.~cit., Cor.~7.9]). 
More specifically, any such decomposition is good and consists 
of minuscule elements by [loc.~cit., 5.4] again. Also, the inclusion 
(\ref{eqn:incl}) induces an equality of Weyl groups relative 
to $T_1$ (see Proposition \ref{prop:weyl}, or
\cite[Thms.~4.1, 4.8, 4.14]{BK}), and hence of Levi subgroups. 
Using Proposition \ref{prop:root}, or \cite[Props.~4.7, 4.13, 4.15]{BK}, 
we now obtain a complete description of minimal families in this setting:

\begin{theorem}\label{thm:final}
Let $G$ be a simply-laced semi-simple algebraic group,
$w \in W$ a minuscule element, and $\hw = (w_1,\ldots,w_m)$ 
a generalized reduced decomposition of $w$ obtained
by Construction 1. Assume that $X(w_1), \ldots, X(w_m)$
are smooth, and consider a family $\cK$ of minimal
rational curves on $\hX(\hw)$. Then there exist
an integer $1 \leq i \leq m$ and an isomorphism 
$\hX(w_i,\ldots,w_m) \simeq Y \times Z$,
where $Y$ is a minuscule homogeneous space, 
such that $\cK_{\hx}$ consists of the lines in
$Y$ through $y$. Here $\hX(w_i, \ldots, w_m)$ is
identified with the fiber at $\hx$ of the fibration 
$\hf_{i-1} : \hX(\hw) \to \hX(w_1,\ldots,w_{i-1})$,
and $y$ denotes the image of $\hx$ under the projection 
to $Y$.
\end{theorem}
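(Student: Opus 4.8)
The plan is to reduce Theorem \ref{thm:final} to the structural results already established, namely Proposition \ref{prop:isom}, Proposition \ref{prop:final}, and the product decomposition of Lemma \ref{lem:product}, together with the two combinatorial inputs (Propositions \ref{prop:root} and \ref{prop:weyl}) that make the hypotheses of Proposition \ref{prop:final} available for Construction 1 decompositions. First I would invoke the inductive structure of $\hX(\hw)$: by Borel's fixed point theorem $\cK_{\hx}$ contains some $T$-stable curve $\hC_{k,j}$, and if $j\geq 2$ (i.e.\ the curve is contracted by $\hf$) then Lemma \ref{lem:contraction} identifies $\cK_{\hx}$ with a minimal family on the fiber $\hY=\hX(w_2,\ldots,w_m)$, on which the $T$-action is twisted by $w_1$. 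Since $(w_2,\ldots,w_m)$ is again a Construction 1 decomposition with smooth $X(w_k)$, induction on $m$ lets me replace $\hw$ by the shorter decomposition. This reduces everything to the case where $\cK_{\hx}$ contains a $T$-stable curve $\hC_{i,1}$ that is \emph{not} contracted by $\hf$, which sets the integer $i=1$ in the statement.

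In that base case I would apply Proposition \ref{prop:final}. Its hypotheses are met: $\hw$ is good and consists of minuscule elements by \cite[\S 5.4]{Pe07}, the $X(w_i)$ are smooth by assumption, and the crucial equality of Levi subgroups containing $T_1$ induced by the inclusion (\ref{eqn:incl}) is supplied by Proposition \ref{prop:weyl} (equality of Weyl groups forces equality of Levi subgroups). Since $\cK$ is minimal, Proposition \ref{prop:final} gives that $\hf_*:\cK_{\hx}\to\cL(w_1)_{w_1 x_1}$ and $\hpi_*:\cK_{\hx}\to\cL(w)_{wx}$ are both isomorphisms onto their respective families of lines; in particular $\cK_{\hx}$ is a minuscule homogeneous space isomorphic to the full family of lines through $w_1 x_1$ in the minuscule variety $X(w_1)$. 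This already exhibits $\cK_{\hx}$ as a space of lines, but it identifies those lines on the base $X(w_1)$ rather than inside a factor of the fiber $\hX(w_1,\ldots,w_m)=\hX(\hw)$ itself.

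The remaining task, and the step I expect to be the main obstacle, is to convert the identification ``$\cK_{\hx}=$ lines on the base $X(w_1)$ through $\hf_*$'' into the product statement ``$\hX(\hw)\simeq Y\times Z$ with $\cK_{\hx}=$ lines in $Y$ through $y$.'' The plan here is to use Proposition \ref{prop:root}: its root inequality, combined with the equality case in (\ref{eqn:dimension}) forced by minimality, should show that the minimal simple root $\alpha$ (with $w_1^{-1}(\alpha)\in R^-$ realizing equality) decouples the first factor $X(w_1)$ from the rest of the tower. Concretely, the equality $\he(-w^{-1}(\alpha))=\he(-w_1^{-1}(\alpha))$ means no further reflections in $w_2\cdots w_m$ lengthen the relevant coroot, which by the combinatorics of Construction 1 should force a commutation splitting the fibration $\hf$ off as a genuine direct product $\hX(\hw)\simeq X(w_1)'\times Z$ near $\hx$ (here $Y$ is the minuscule homogeneous space governing the lines). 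I would then invoke Lemma \ref{lem:product}(iv), which asserts that every minimal family on a product is the pull-back of a minimal family on one factor, to conclude that $\cK_{\hx}$ is precisely the family of lines in the factor $Y$ through its base point $y$, the image of $\hx$. The delicate point is verifying that the abstract isomorphism of line-families supplied by Proposition \ref{prop:final} is geometrically realized by an honest product decomposition of $\hX(\hw)$; this is exactly where the two combinatorial propositions must be used in tandem, and where I would expect to spend the most care pinning down that the relevant $T$-stable curves $\hC_{1,1},\ldots,\hC_{\ell_1,1}$ all lie in a single product factor.
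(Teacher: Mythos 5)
Your first two steps track the paper's proof exactly: Borel's fixed point theorem produces a $T$-stable curve $\hC_{i,j}$ in $\cK_{\hx}$; when $j \geq 2$, Lemma \ref{lem:contraction} plus induction on $m$ peels off $w_1$; and in the base case Proposition \ref{prop:final} applies, with Proposition \ref{prop:weyl} supplying the Levi-subgroup hypothesis. But your final step rests on a misreading of Proposition \ref{prop:root}, and this is a genuine gap. That proposition is an unconditional \emph{strict} inequality: whenever $w \neq w_1$, one has $w_1^{-1}(\beta_1) > w^{-1}(\beta_1)$, hence $\he(-w^{-1}(\alpha)) > \he(-w_1^{-1}(\alpha))$ for the unique simple root $\alpha = \beta_1$ of Lemma \ref{lem:us}. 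There is no ``equality case'' to exploit for a commutation splitting of the tower. Combined with the equality of heights that minimality forces via Proposition \ref{prop:final}, it yields a flat contradiction unless $w = w_1$. So the base case does not leave you with a nontrivial fibration that must be decoupled into a product: it collapses entirely, $\hX(\hw) = X(w_1)$ is itself a minuscule homogeneous space, and Proposition \ref{prop:long} finishes the proof. The geometric splitting on which you expected to ``spend the most care'' does not occur and need not be proved; the step you flagged as the main obstacle is in fact a two-line contradiction.

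Relatedly, you mislocate the source of the factor $Z$ in the statement. It does not arise from decoupling $X(w_1)$ from the rest of the tower inside a simple group; it arises from decomposing the semi-simple group $G$ into its simple factors, which decomposes $\hX(\hw)$ into a product of the associated generalized Bott-Samelson varieties. Lemma \ref{lem:product}(iv) then says the minimal family is pulled back from a single factor, reducing the whole argument to $G$ simple. Your proposal omits this reduction, yet it is the only place where Lemma \ref{lem:product} is used and it is exactly where $Y \times Z$ comes from. Moreover, the reduction to $G$ simple is needed before you can even invoke Lemma \ref{lem:us} (uniqueness of the simple root $\alpha$ with $w_1^{-1}(\alpha) \in R^-$, which is a statement about a single-peaked quiver in a simple group) and hence before Propositions \ref{prop:final} and \ref{prop:root} can be combined as above.
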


\begin{proof}
We argue as in the proofs of Lemma \ref{lem:lines}
and Theorem \ref{thm:bott}. By Borel's fixed point theorem, 
$\cK_{\hx}$ contains a $T$-stable curve, i.e., some curve 
$\hC_{i,j}$. If $j \geq 2$ then $\hC_{i,j}$ is contracted
by $\hf$. Using Lemma \ref{lem:contraction}, it follows that
$\cK_{\hx} = \cL_{\hx}$ for a unique minimal family 
$\cL$ on the fiber of $\hf$ at $\hx$. As this fiber is
isomorphic to $\hX(w_2,\ldots,w_m)$, we conclude by
induction on $m$.

Thus, we may assume that $\cK_{\hx}$ contains a $T$-stable curve 
of the form $\hC_{i,1}$. Also, decomposing $G$ into a direct
product of simple groups yields a decomposition of
$\hX(\hw)$ into a product of the associated 
generalized Bott-Samelson resolutions. So we may further 
assume that $G$ is simple by using Lemma \ref{lem:product}. 

By Lemma \ref{lem:us}, there exists a unique 
$\alpha \in S$ such that $w_1^{-1}(\alpha) \in R^-$.
Using Proposition \ref{prop:final}, we obtain that
$w^{-1}(\alpha) \in R^-$ and 
$\he(- w^{-1}(\alpha)) = \he(-w_1^{-1}(\alpha))$. 
If $w \neq w_1$ then we have
$w_1^{-1}(\alpha) > w^{-1}(\alpha)$
by Proposition \ref{prop:root}, a contradiction. Thus,
$w = w_1$; then $\hX(\hw) = X(w_1)$ is just a minuscule 
homogeneous space, and we conclude by Proposition
\ref{prop:long}.
\end{proof}

\begin{remark}\label{rem:final}
The description of line bundles on Bott-Samelson varieties extends 
to their generalized versions obtained by Construction 1 (see 
\cite[\S 6.1]{Pe07}), and Lemma \ref{lem:bott} can also 
be extended to this setting. In particular, $\tX(\tw)$ admits
a smallest very ample line bundle, and the $T$-stable curves 
$\tC_{i,m}$ are lines in the corresponding projective embedding. 
In fact, all lines are obtained by a variant of this construction, 
as follows from Theorem \ref{thm:final}. 

Also, the canonical class of $\tX(\tw)$ is described in combinatorial 
terms in \cite[\S 6.2]{Pe07}. This yields a formula for the dimension
of the family of rational curves on $\tX(\tw)$ containing a given 
$T$-stable curve. But we do not know how to deduce the above 
theorem directly from this formula.
\end{remark}

\begin{example}\label{ex:final}
As in Example \ref{ex:linesbis}, we illustrate the above results 
in the case where $G = \SL(4)$ and 
$w = s_1 s_3 s_2 = s_3 s_1 s_2$. The Schubert variety $X(w)$ 
admits three generalized Bott-Samelson desingularizations, 
displayed in the following commutative diagram:
\[ \xymatrix{ 
& \tX(s_1,s_3,s_2) \simeq \tX(s_3,s_1,s_2) 
\ar[dl]_{\tpi_{32}}  \ar[dr]^{\tpi_{12}}   &  \\
\hX(s_1, s_3 s_2) \ar[dr]^{\hpi_{32}} \ar[d]^{\hpi_1} &   &
\hX(s_3,s_1s_2) \ar[dl]_{\hpi_{12}} \ar[d]^{\hpi_3}  \\
X(s_1) &  X(w)  &  X(s_3), \\
} \]
where $X(s_1) \simeq \bP^1 \simeq X(s_3)$ and 
$\hpi_1$ (resp.~$\hpi_3$) is a locally trivial fibration with
fiber $X(s_3 s_2)$ (resp.~$X(s_1s_2)$); both fibers are
isomorphic to $\bP^2$. Moreover, the Bott-Samelson variety 
$\tX(s_1,s_3,s_2)$ admits a unique minimal family, consisting 
of the fibers of the natural morphism 
$\tX(s_1,s_3,s_2) \to \tX(s_1,s_3) \simeq \bP^1 \times \bP^1$.
Likewise, $\hX(s_1,s_3 s_2)$ (resp.~$\hX(s_3,s_1 s_2)$)
admits a unique minimal family, consisting of the lines
in the fibers of $\hpi_1$ (resp.~$\hpi_3$). The two latter 
minimal families are sent isomorphically to the two families 
of lines in $X(w)$.

In geometric terms, $\hpi_{32}$ (resp.~$\hpi_{12}$) is the 
blowing-up of $X(w)$ along its Weil divisor $X(s_3 s_2)$
(resp.~$X(s_1s_2)$), and the composition
$\pi = \hpi_{32} \circ \tpi_{32} = \hpi_{12} \circ \tpi_{12}$ 
is the blowing-up at the singular point $x$. The above diagram 
gives back the Atiyah flop (see \cite{Atiyah}).
\end{example}

\section{Some combinatorial results on generalized Bott-Samelson varieties}
\label{sec:comb}

\subsection{A sequence of roots}
\label{subsec:seq}

Throughout this section, we consider a simple, simply-laced 
and simply-connected algebraic group $G$, a minuscule 
parabolic subgroup $P = P_I = P^{\alpha}$, and a Weyl
group element $w \in W^I$.
 
We identify roots and coroots via a $W$-invariant 
scalar product $ \langle \, , \, \rangle$. For any $v \in W$, we set 
$R^+(v) := \{ \gamma \in R^+ ~\vert~ v(\gamma) \in R^- \}$;
then $\vert R^+(v) \vert = \ell(v)$.

Let $\hw = (w_1, w_2, \ldots , w_m)$ be a generalized reduced 
decomposition of $w$ obtained by Construction 1 of \cite[5.4]{Pe05}.
Choose a reduced decomposition
$\tw = (s_{\beta_1}, \ldots, s_{\beta_r})$ of $w$ that refines the above
generalized reduced decomposition. We will freely use the associated
quiver $Q_w$, as defined in \cite[2.1, 4.2]{Pe05}. In particular, 
this quiver has vertices $1, \ldots, r$, colored by simple roots via 
the map $\beta : j \mapsto \beta_j$. The set of peaks
$\Peaks(Q_w)$ is equipped with an ordering 
$i_1 \preceq  i_2 \preceq \ldots \preceq i_m$ 
that defines the above generalized reduced decomposition. 
We illustrate this on two examples which will be reconsidered
repeatedly:

\begin{example}\label{ex:SL5}
Let $G = \SL_5$ (a simple group of type $\rA_4$) and $P=P^{\alpha_{2}}$.
Then $X = G/P$ is the Grassmannian $\bG(2,5)$. Take 
\[ w = s_2s_1s_4s_3s_2 \] 
so that $X(w)$ is the Schubert divisor in $X$. 
We have $\Peaks(Q_w) = \{1,3\}$ and  
$\beta(\Peaks(Q_w)) = \{ \alpha_2, \alpha_4 \}$.
The generalized reduced decomposition associated with the standard
ordering $\alpha_2 \preceq \alpha_4$ of $\Peaks(Q_w)$  has 
$w_1 = s_2s_1$ and $w_2 = s_4s_3s_2$. For the reverse ordering,
$\alpha_4 \preceq' \alpha_2$, we have $w'_1 = s_4$ and 
$w'_2 = s_2s_1s_3s_2$.
\end{example}

\begin{example}\label{ex:SL9} 
Let $G = \SL_9$ (type $\rA_8$) and $P=P^{\alpha_4}$; then
$X = G/P = \bG(4,9)$. Take 
\[ w = s_3s_2s_1s_5s_4s_3s_2s_6s_5s_4s_3s_8s_7s_6s_5s_4, \] 
then $X(w)$ is a Schubert variety of dimension $16$. We have 
$\Peaks(Q_w) = \{ 1, 4, 12 \}$ and 
$\beta(\Peaks(Q_w)) = \{ \alpha_3, \alpha_5, \alpha_8 \}$. 
For the standard ordering $\alpha_3 \preceq \alpha_5 \preceq \alpha_8$
of $\Peaks(Q_w)$, we have $w_1 = s_3s_2s_1$, $w_2 = s_5s_4s_3s_2s_6s_5s_4s_3$, 
and $w_3 = s_8s_7s_6s_5s_4$. 
For the ordering $\alpha_8 \preceq' \alpha_3 \preceq' \alpha_5$, we have  
$w'_1 = s_8$, $w'_2 = s_3s_2s_1$, and $w'_3 = s_5s_4s_3s_2s_6s_5s_4s_3s_7s_6s_5s_4$. 
\end{example}

\begin{lemma}\label{lem:us} 
With the above notation, we have $R^+(w_1^{-1}) \cap S = \{ \beta_1 \}$.
\end{lemma}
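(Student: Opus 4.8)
The plan is to identify $R^+(w_1^{-1}) \cap S$ with the set of left descents of $w_1$, and then to read this set off from the quiver $Q_{w_1}$, the point being that $Q_{w_1}$ has a single source.

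First I would record the standard translation: for $\alpha \in S$ one has $\alpha \in R^+(w_1^{-1})$ if and only if $w_1^{-1}(\alpha) \in R^-$, that is, $\ell(s_\alpha w_1) < \ell(w_1)$. Hence $R^+(w_1^{-1}) \cap S$ is exactly the left descent set of $w_1$, and it contains $\beta_1$, since the prefix $(s_{\beta_1}, \ldots, s_{\beta_{\ell_1}})$ of $\tw$ (with $\ell_1 := \ell(w_1)$) is a reduced expression of $w_1$. Thus the whole content is that $\beta_1$ is the \emph{only} left descent of $w_1$.

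Next I would move to the quiver, using that $w_1$ is itself minuscule and that $Q_{w_1}$ is the induced sub-quiver of $Q_w$ on the vertices $1, \ldots, \ell_1$. Recall from \cite[2.1, 4.2]{Pe05} that the reduced expressions of a minuscule element are the linear extensions of its quiver, that the sources (minimal vertices) are exactly the vertices which can occur first, and hence that the colours of the sources are precisely the left descents; moreover two vertices of the same colour are comparable, so distinct sources carry distinct colours and counting sources counts left descents. Applying this to $w_1$, the lemma becomes the assertion that $Q_{w_1}$ has a \emph{unique} source, necessarily the vertex $1$ coloured $\beta_1$. (One may phrase this concretely via (\ref{eqn:roots}): there $R^+(w_1^{-1}) = \{\gamma_1, \ldots, \gamma_{\ell_1}\}$ with $\gamma_j := s_{\beta_1}\cdots s_{\beta_{j-1}}(\beta_j)$ and $\gamma_1 = \beta_1$, and a vertex $j$ is a source exactly when $\gamma_j = \beta_j$; the claim is that this occurs only for $j = 1$.)

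Finally, the key input, and the main obstacle, is the structural fact from Construction 1 that $Q_{w_1}$, being the prefix of $Q_w$ on the first $\ell_1$ vertices, is an order ideal of $Q_w$ meeting $\Peaks(Q_w)$ in the single vertex $i_1 = 1$. Granting this, the minimal vertices of the ideal $Q_{w_1}$ are automatically global sources of $Q_w$, i.e. peaks contained in $Q_{w_1}$; since $i_1$ is the only such peak, it is the unique source of $Q_{w_1}$, and its colour $\beta_1$ is the unique left descent of $w_1$, giving $R^+(w_1^{-1}) \cap S = \{\beta_1\}$. The crux is therefore to show that the first block $w_1$ captures exactly one peak of $Q_w$, and this is precisely where the definition of Construction 1, together with the compatibility of the chosen ordering $i_1 \preceq \cdots \preceq i_m$ with the quiver order, must be used; I would extract it from \cite[5.4]{Pe05} (equivalently \cite[\S 5.4]{Pe07}), re-deriving it from the peak-by-peak peeling that defines the construction if necessary. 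The remaining steps are formal.
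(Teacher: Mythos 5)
Your proposal is correct and follows essentially the same route as the paper: the paper's proof likewise identifies $R^+(w_1^{-1}) \cap S$ with $\beta(\Peaks(Q_{w_1}))$ and then invokes \cite[Prop.~5.13]{Pe07} for the fact that $Q_{w_1}$ has the unique peak $i_1$, which is precisely the ``unique source'' statement you isolate as the crux and propose to extract from Construction~1. Your ideal-theoretic sketch (a prefix of a linear extension is an order ideal, so minimal vertices of $Q_{w_1}$ are peaks of $Q_w$) is a sound way to re-derive that cited input, but it adds nothing beyond what the paper obtains directly from Perrin.
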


\begin{proof}
By \cite[Prop.~5.13]{Pe07}, $Q_{w_1}$ has only one peak, namely, 
$i_1$. On the other hand, we have 
$\beta(\Peaks(Q_{w_1})) = R^+(w_1^{-1}) \cap S$.
This yields the assertion.
\end{proof}

By construction, there exists an increasing sequence 
$l_1 = 1 \leq l_2  < l_3 < \ldots < l_{m+1} = r$ 
of positive integers such that 
$w_1 = s_{\beta_{l_1}}\cdots s_{\beta_{l_2}}$ and
$w_j = s_{\beta_{l_j + 1}}\cdots s_{\beta_{l_{j+1}}}$ 
for all $2 \leq j \leq m$.  
 
Let $v_i := s_{\beta_i}s_{\beta_{i-1}} \cdots s_{\beta_1}$ ($1\leq i \leq r$).
Let $\gamma_i := v_i(\beta_1)$ ($1\leq i \leq r$); then $\gamma_i$ is
a negative root. In the rest of this subsection, we prove the following:

\begin{proposition}\label{prop:leq}
$\gamma_{i+1}\leq \gamma_i$ for all $1\leq i \leq r-1$.
\end{proposition}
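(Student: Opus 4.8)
The plan is to reduce the statement to a single scalar-product inequality and then read that inequality off the trajectory of the minuscule weight $\varpi$ along the reduced word. Since
\[ \gamma_{i+1} = v_{i+1}(\beta_1) = s_{\beta_{i+1}} v_i(\beta_1) = s_{\beta_{i+1}}(\gamma_i), \]
one has $\gamma_{i+1} - \gamma_i = - \langle \gamma_i, \beta_{i+1}^{\vee} \rangle \, \beta_{i+1}$, and as $\beta_{i+1}$ is a positive (simple) root it suffices to prove $\langle \gamma_i, \beta_{i+1}^{\vee} \rangle \geq 0$ for all $i$. To control $\gamma_i$ I would set $u_i := s_{\beta_1} \cdots s_{\beta_i}$ (so $v_i = u_i^{-1}$) and follow the weights
\[ \nu_k := s_{\beta_{k+1}} \cdots s_{\beta_r}(\varpi) \quad (0 \leq k \leq r), \]
which satisfy $\nu_r = \varpi$, $\nu_0 = w(\varpi)$ and $\nu_{k-1} = s_{\beta_k}(\nu_k)$; all the $\nu_k$ are weights of the minuscule module $V(\varpi)$.

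The heart of the argument, and the step I expect to be the main obstacle, is to show that this chain is \emph{saturated}, i.e.\ $\langle \nu_k, \beta_k^{\vee} \rangle = 1$, so that $\nu_{k-1} = \nu_k - \beta_k$ for every $k$. Writing $t_k := s_{\beta_{k+1}} \cdots s_{\beta_r}$, invariance of the pairing gives $\langle \nu_k, \beta_k^{\vee} \rangle = \langle \varpi, t_k^{-1}(\beta_k)^{\vee} \rangle$; since $t_{k-1} = s_{\beta_k} t_k$ is a reduced suffix of $\tw$, the root $t_k^{-1}(\beta_k)$ is positive, and minusculeness of $\varpi$ then forces $\langle \nu_k, \beta_k^{\vee} \rangle \in \{0,1\}$. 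To rule out the value $0$ I would argue by contradiction: it would mean $\gamma := t_k^{-1}(\beta_k)$ lies in $R^+_I$, while
\[ w(\gamma) = u_k t_k(\gamma) = u_k(\beta_k) = - s_{\beta_1} \cdots s_{\beta_{k-1}}(\beta_k) \in R^-, \]
the last membership because $u_k$ is reduced. This contradicts $w \in W^I$, which gives $w(R^+_I) \subseteq R^+$. This is precisely where the two standing hypotheses — that $\varpi$ is minuscule and that $w \in W^I$ — are used.

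Granting saturation, the conclusion is then a short computation. From $\nu_0 = u_i(\nu_i)$ and $\beta_1 = \nu_1 - \nu_0$ I would write
\[ \gamma_i = u_i^{-1}(\beta_1) = u_i^{-1}(\nu_1) - \nu_i = \xi_i - \nu_i, \qquad \xi_i := u_i^{-1}(\nu_1), \]
where $\xi_i$ is again a weight of $V(\varpi)$. Saturation at index $k = i+1$ gives $\langle \nu_{i+1}, \beta_{i+1}^{\vee} \rangle = 1$ and $\nu_i = \nu_{i+1} - \beta_{i+1}$, whence $\langle \nu_i, \beta_{i+1}^{\vee} \rangle = -1$; on the other hand $\langle \xi_i, \beta_{i+1}^{\vee} \rangle \geq -1$ because $\xi_i$ is a weight of a minuscule module. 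Therefore
\[ \langle \gamma_i, \beta_{i+1}^{\vee} \rangle = \langle \xi_i, \beta_{i+1}^{\vee} \rangle - \langle \nu_i, \beta_{i+1}^{\vee} \rangle = \langle \xi_i, \beta_{i+1}^{\vee} \rangle + 1 \geq 0, \]
which is exactly the required inequality. I would note in closing that this reasoning uses nothing about the particular decomposition $\hw$ beyond the fact that $\tw$ is a reduced word for the minuscule element $w \in W^I$, so the monotonicity holds for every reduced decomposition refining it.
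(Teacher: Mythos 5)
Your proof is correct, and it takes a genuinely different route from the paper's. Both arguments make the same initial reduction, via $\gamma_{i+1} = s_{\beta_{i+1}}(\gamma_i)$, to the inequality $\langle \gamma_i, \beta_{i+1}^{\vee} \rangle \geq 0$, but they establish it differently. The paper dominantizes $\beta_1$: it chooses a minimal $v$ with $v(\beta_1) = \alpha_0$ (the highest root), shows $R^+(w^{-1}) \cap R^+(v) = \emptyset$ by combining Lemma \ref{lem:-1} (every $\mu \in R^+(v)$ pairs to $-1$ with $\beta_1$) with Lemma \ref{lem:geq} (every $\mu \in R^+(w^{-1})$ pairs non-negatively with $\beta_1$ --- the only place minusculeness enters there), deduces $\ell(v_i v^{-1}) = \ell(v) + i$, and obtains the inequality by pairing the positive root $(v_i v^{-1})^{-1}(\beta_{i+1})$ against the dominant weight $\alpha_0$. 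You instead prove saturation of the weight chain $\nu_k = t_k(\varpi)$, i.e.\ $\nu_{k-1} = \nu_k - \beta_k$ for all $k$, and your verification of this is airtight: $t_k^{-1}(\beta_k) \in R^+$ since suffixes of a reduced word are reduced, minusculeness pins $\langle \varpi, t_k^{-1}(\beta_k)^{\vee} \rangle$ to $\{0,1\}$, and the value $0$ would place $t_k^{-1}(\beta_k)$ in $R^+_I$ while $w(t_k^{-1}(\beta_k)) = -u_{k-1}(\beta_k) \in R^-$, contradicting $w \in W^I$. Writing $\gamma_i = \xi_i - \nu_i$ as a difference of two weights in the orbit $W\varpi$ then gives $\langle \gamma_i, \beta_{i+1}^{\vee} \rangle = \langle \xi_i, \beta_{i+1}^{\vee} \rangle + 1 \geq 0$, since $\langle \nu_i, \beta_{i+1}^{\vee}\rangle = -1$ is known exactly from saturation and $\langle \xi_i, \beta_{i+1}^{\vee} \rangle \geq -1$ by minusculeness. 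What each approach buys: the paper's argument isolates Lemma \ref{lem:geq} as a freestanding statement about $R^+(w^{-1})$, while yours identifies the underlying mechanism as the saturated-chain characterization of minuscule elements (the combinatorics of \cite{Stembridge} and of Perrin's quivers), which makes the proof arguably more conceptual and avoids both the highest root and the induction in Lemma \ref{lem:-1}. One small caveat on your closing remark: the observation that only ``$\tw$ reduced and $w \in W^I$ minuscule'' is needed applies equally to the paper's proof, which likewise never uses the Construction-1 structure of $\hw$ in this proposition --- so this is not an added generality of your approach, though it is worth making explicit.
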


We first obtain two preliminary results.

\begin{lemma}\label{lem:-1} 
Let $\gamma \in R^+$. Let $v \in W$ be a minimal element 
such that $v(\gamma) = \alpha_0$, the highest root. Then for any 
$\mu \in R^+(v)$, we have $\langle \mu, \gamma  \rangle =-1$.
\end{lemma}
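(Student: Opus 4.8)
The plan is to translate the inner product $\langle\mu,\gamma\rangle$ into one involving the highest root $\alpha_0$, and then to play off the dominance of $\alpha_0$ against the minimality of $v$. Since $G$ is simply-laced, I first normalize $\langle\,,\,\rangle$ so that $\langle\beta,\beta\rangle=2$ for every root $\beta$; then $\beta^\vee=\beta$, and $\langle\beta,\beta'\rangle\in\{0,\pm1,\pm2\}$ for any roots $\beta,\beta'$, the values $\pm2$ occurring precisely when $\beta'=\pm\beta$.

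The first step is purely formal. Fix $\mu\in R^+(v)$ and set $\delta:=-v(\mu)$, which is a positive root since $v(\mu)\in R^-$. Using that $v$ preserves $\langle\,,\,\rangle$ and that $v(\gamma)=\alpha_0$, I obtain
\[ \langle\mu,\gamma\rangle=\langle v(\mu),v(\gamma)\rangle=\langle-\delta,\alpha_0\rangle=-\langle\delta,\alpha_0\rangle, \]
so the assertion is equivalent to $\langle\delta,\alpha_0\rangle=1$.

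Next I would pin down the crude bounds $\langle\delta,\alpha_0\rangle\in\{0,1\}$. Because $\alpha_0$ is dominant, $\langle\alpha_0,\alpha_i\rangle\ge 0$ for every simple root $\alpha_i$; writing $\delta$ as a non-negative combination of simple roots then gives $\langle\delta,\alpha_0\rangle\ge 0$. The value $2$ is excluded since it would force $\delta=\alpha_0$, hence $v(\mu)=-\alpha_0=-v(\gamma)$ and $\mu=-\gamma$, which is impossible for $\mu,\gamma\in R^+$.

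The crux is to rule out $\langle\delta,\alpha_0\rangle=0$, equivalently $\langle\mu,\gamma\rangle=0$, and this is the only place where the minimality of $v$ enters. If $\langle\mu,\gamma\rangle=0$, then $s_\mu(\gamma)=\gamma$, so $vs_\mu$ still sends $\gamma$ to $\alpha_0$; but $\mu\in R^+(v)$ means $v(\mu)\in R^-$, whence $\ell(vs_\mu)<\ell(v)$, contradicting the minimality of $v$. This forces $\langle\delta,\alpha_0\rangle=1$, i.e. $\langle\mu,\gamma\rangle=-1$, as desired. The single ingredient I would flag as needing care — though it is a standard property of Coxeter groups — is the length criterion $\ell(vs_\mu)<\ell(v)\iff v(\mu)\in R^-$ for a possibly non-simple positive root $\mu$; I would cite it rather than reprove it.
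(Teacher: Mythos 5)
Your proof is correct, but it takes a genuinely different route from the paper's. The paper argues by induction on $\ell(v)$: choosing $i$ with $\ell(v s_i) = \ell(v) - 1$, it first establishes $\langle \alpha_i, \gamma \rangle = -1$ from minimality (implicitly the same dominance-plus-minimality reasoning you use, but only for the simple root $\alpha_i$, where the length criterion $\ell(v s_i) < \ell(v) \iff v(\alpha_i) \in R^-$ is the elementary textbook fact), and then transports the conclusion to the remaining $\mu \in R^+(v)$ via $\langle \mu, \gamma \rangle = \langle s_i(\mu), s_i(\gamma) \rangle$, applying the inductive hypothesis to the pair $(s_i(\gamma), v s_i)$ through the standard decomposition $R^+(v) = \{\alpha_i\} \sqcup s_i\bigl(R^+(v s_i)\bigr)$. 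You instead handle every $\mu \in R^+(v)$ simultaneously and without induction: transferring to $\langle \delta, \alpha_0 \rangle$ with $\delta = -v(\mu) \in R^+$, using dominance of $\alpha_0$ to get $\langle \mu, \gamma \rangle \leq 0$, the simply-laced Cauchy--Schwarz bound to exclude $-2$, and minimality to exclude $0$ via $\ell(v s_\mu) < \ell(v)$. The one ingredient you correctly flag is the length criterion $\ell(v s_\mu) < \ell(v) \iff v(\mu) \in R^-$ for an arbitrary positive root $\mu$; this is a standard consequence of the strong exchange condition and is safely citable (e.g.\ Bj\"orner--Brenti or Humphreys), and all your other steps check out, including $s_\mu(\gamma) = \gamma$ when $\langle \mu, \gamma \rangle = 0$ and the exclusion of $\delta = \alpha_0$ (which would force $\mu = -\gamma$). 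The trade-off is clean: your argument is shorter and more conceptual but leans on the stronger Coxeter-group fact, while the paper's induction needs only the simple-reflection length criterion at the cost of the bookkeeping with $s_i(\gamma)$ and the minimality of $v s_i$ for the translated problem.
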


\begin{proof} 
By induction on $\ell(v)$. If $\ell(v)=1$, then we have $v=s_i$ 
for some $i$ and $\gamma = s_i(\alpha_0) = \alpha_0 - \alpha_i$, 
since $G$ is simply-laced. So, 
$\langle \alpha_i, \gamma  \rangle = -1$. 

Assume that $\ell(v)\geq 2$. Choose an integer $i$ such that 
$\ell(vs_i) = \ell(v) - 1$. Since $v$ is minimal such that 
$v(\gamma) = \alpha_0$, we have 
$\langle \alpha_i, \gamma  \rangle =-1$.
On the other hand by induction, for any 
$\mu \in R^+(v) \setminus \{ \alpha_i \}$, we have 
$\langle \mu, \gamma  \rangle = \langle s_i(\mu), s_i(\gamma)  \rangle = -1$.
\end{proof}

\begin{lemma}\label{lem:geq}
For any $\mu \in R^+(w^{-1})$, we have $\langle \mu, \beta_1  \rangle \geq 0$. 
\end{lemma}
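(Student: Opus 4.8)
The plan is to argue directly with the minuscule fundamental weight $\varpi := \varpi_{\alpha}$, rather than by induction on the reduced word: deleting the initial letter $s_{\beta_1}$ would destroy the hypothesis $w \in W^I$ and would flip the sign of the pairing one wants to control, so a naive induction does not close. Throughout I normalize the $W$-invariant form so that $\langle \gamma, \gamma \rangle = 2$ for every root $\gamma$; then $\gamma^{\vee} = \gamma$, the scalar product $\langle \lambda, \gamma \rangle$ agrees with the pairing $\langle \lambda, \gamma^{\vee} \rangle$, and minusculeness (see \S \ref{subsec:schubert}) says precisely that $\langle \lambda, \gamma \rangle \in \{ -1, 0, 1 \}$ for every weight $\lambda$ of $V(\varpi)$ and every root $\gamma$. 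Recall also that these weights are exactly the $u \varpi$, $u \in W$.

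The first and main step is to prove the uniform identity $\langle w \varpi, \mu \rangle = -1$ for every $\mu \in R^+(w^{-1})$. I would write $\langle w \varpi, \mu \rangle = \langle \varpi, w^{-1} \mu \rangle$ and observe, using $\langle \varpi, \alpha_i \rangle = \delta_{i, \alpha}$, that this pairing equals the coefficient of $\alpha$ in the root $w^{-1} \mu$. Since $\mu \in R^+(w^{-1})$ we have $w^{-1} \mu \in R^-$; moreover $w^{-1} \mu \notin R_I$, for otherwise, being negative, it would lie in $R_I \cap R^-$, and then $\mu \in w(R_I \cap R^-) \subseteq R^-$ (as $w \in W^I$ gives $w(R_I \cap R^+) \subseteq R^+$), contradicting $\mu > 0$. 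Hence the $\alpha$-coefficient of $w^{-1} \mu$ is strictly negative, and by minusculeness it must equal $-1$. This is exactly where the hypotheses $w \in W^I$ and ``$P$ minuscule'' enter, and I expect it to be the only real obstacle; the remainder is formal.

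Applying this to $\mu = \beta_1$, which lies in $R^+(w^{-1})$ because the reduced word for $w$ begins with $s_{\beta_1}$ (so $w^{-1}\beta_1 \in R^-$), gives $\langle w \varpi, \beta_1 \rangle = -1$. Therefore the simple reflection $s_{\beta_1}$ sends $w\varpi$ to
\[ s_{\beta_1}(w \varpi) = w\varpi - \langle w\varpi, \beta_1 \rangle \beta_1 = w\varpi + \beta_1, \]
which is again a weight of $V(\varpi)$. For an arbitrary $\mu \in R^+(w^{-1})$ I would then compute
\[ \langle \beta_1, \mu \rangle
= \langle s_{\beta_1}(w\varpi) - w\varpi, \mu \rangle
= \langle s_{\beta_1}(w\varpi), \mu \rangle - \langle w\varpi, \mu \rangle
= \langle s_{\beta_1}(w\varpi), \mu \rangle + 1 \geq -1 + 1 = 0, \]
the final inequality being minusculeness applied to the weight $s_{\beta_1}(w\varpi)$ and the root $\mu$. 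This yields the assertion.

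It is worth noting that this route uses neither Construction 1 nor the preliminary Lemmas \ref{lem:us} and \ref{lem:-1}; it needs only that $w \in W^I$, that $P$ is minuscule, and that $\beta_1$ is a simple root with $w^{-1}\beta_1 \in R^-$. The only points demanding care are the normalization of the form, so that both the ``coefficient of $\alpha$'' reading of $\langle \varpi, \cdot \rangle$ and the interval $\{-1,0,1\}$ are simultaneously valid, and the verification that $w^{-1}\mu$ avoids $R_I$, which is precisely the content of $w \in W^I$.
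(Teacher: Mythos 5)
Your proof is correct, and every step checks out against the paper's conventions: $\beta_1$ is a simple root with $w^{-1}(\beta_1) \in R^-$ because the reduced word for $w$ begins with $s_{\beta_1}$; the weights of $V(\varpi)$ are exactly the $u\varpi$, $u \in W$ (end of \S\ref{subsec:schubert}); and your verification that $w^{-1}(\mu) \notin R_I$, which uses exactly the hypothesis $w \in W^I$, is what pins the $\alpha$-coefficient of the negative root $w^{-1}(\mu)$ at $-1$ rather than $0$. The paper proves the lemma from the same three ingredients but by contradiction and at the level of roots rather than weights: if $\langle \mu, \beta_1 \rangle \leq -1$ then $\mu + \beta_1$ is a root (this is where the simply-laced root-addition fact is invoked), and $-w^{-1}(\mu+\beta_1)$ would be a root pairing to at least $2$ with $\varpi_{\alpha}$, contradicting minusculeness. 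The two arguments are close to dual formulations of one another: your identity $\langle w\varpi, \mu \rangle = -1$ for all $\mu \in R^+(w^{-1})$ is the paper's observation that $\alpha \leq -w^{-1}(\mu)$ combined with the minuscule bound, and when $\langle \mu, \beta_1 \rangle = -1$ one has $s_{\beta_1}(\mu) = \mu + \beta_1$, so your final application of minusculeness to the weight $s_{\beta_1}(w\varpi)$ and the root $\mu$ — i.e., to $\langle \varpi, w^{-1}s_{\beta_1}(\mu) \rangle$ — is the paper's application to the root $-w^{-1}(\mu+\beta_1)$ in disguise. What your packaging buys: it is a direct computation rather than a contradiction; it replaces the lemma ``two roots with negative inner product sum to a root'' by the trivial fact that $W$ permutes roots and weights; and it actually yields the sharper statement $\langle \mu, \beta_1 \rangle = \langle \varpi, w^{-1}s_{\beta_1}(\mu) \rangle + 1 \in \{0,1,2\}$. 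Your closing remark that the argument needs neither Construction 1 nor Lemmas \ref{lem:us} and \ref{lem:-1} is accurate, though the same is true of the paper's own proof of this lemma (Lemma \ref{lem:-1} enters only in the proof of Proposition \ref{prop:leq}).
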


\begin{proof} 
We have $-w^{-1}(\mu) , -w^{-1}(\beta_1) \in R^+(w)$, and hence,  
$\alpha \leq -w^{-1}(\mu)$ and  $\alpha \leq -w^{-1}(\beta_1)$.  Now, if  
$\langle \mu, \beta_1  \rangle \leq -1$, then $\mu + \beta_1$ is a root and 
$\langle \varpi_{\alpha} , -w^{-1}(\mu + \beta_1) \rangle \geq 
\langle \varpi_{\alpha} , 2 \alpha \rangle = 2$, 
contradicting the fact that $\varpi_{\alpha}$ is minuscule.
Thus, we have $\langle \mu, \beta_1  \rangle \geq 0$. 
\end{proof}

We may now prove Proposition \ref{prop:leq}.
Let $v \in W$ be a minimal element such that $v(\beta_1) = \alpha_0$. 
Then by Lemmas \ref{lem:-1} and \ref{lem:geq}, we have 
$R^+(w^{-1}) \cap R^+(v) = \emptyset$. Therefore, we have 
$\ell(w^{-1}v^{-1}) = \ell(w) + \ell(v)$.
In particular, we have $\ell(v_i v^{-1}) = \ell(v) + i$ for all $1\leq i \leq r$. 
Therefore, $(v_i v^{-1})^{-1}(\beta_{i+1})$ is a positive root.   
Since $\alpha_0$ is dominant,  we have  
\[ \langle v_i v^{-1}(\alpha_0) , \beta_{i+1} \rangle
= \langle \alpha_0,  (v_i v^{-1})^{-1}(\beta_{i+1}) \rangle \geq 0. \]  
Therefore, for all $1\leq i \leq r-1$, we have 
\[ \gamma_{i+1}=v_{i+1}(\beta_1)=v_{i+1}v^{-1}(\alpha_0)
=v_{i}v^{-1}(\alpha_0) - \langle v_{i}v^{-1}(\alpha_0) , \beta_{i+1} \rangle \beta_{i+1}
\leq v_{i}v^{-1}(\alpha_0) = \gamma_i. \]
This completes the proof.

\subsection{Root inequality}
\label{subsec:root}

We keep the notation of Subsection \ref{subsec:seq}, and prove
the following:

\begin{proposition}\label{prop:root}
Assume that $w \neq w_1$. Then we have
$w_1^{-1}(\beta_1) > w^{-1}(\beta_1)$.
\end{proposition}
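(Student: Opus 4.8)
The plan is to deduce Proposition \ref{prop:root} from the monotone sequence $(\gamma_i)$ of Proposition \ref{prop:leq}, upgrading its weak inequality to a strict one. Since $w_1 = s_{\beta_1}\cdots s_{\beta_{l_2}}$ and $w = s_{\beta_1}\cdots s_{\beta_r}$, one has $w_1^{-1}(\beta_1) = \gamma_{l_2}$ and $w^{-1}(\beta_1) = \gamma_r$; as $w\neq w_1$ forces $l_2 < r$, Proposition \ref{prop:leq} gives $\gamma_{l_2}\geq\gamma_r$, and it remains to exclude equality. If $\gamma_{l_2} = \gamma_r$, then by monotonicity the whole segment is constant, so $s_{\beta_{i+1}}(\gamma_i) = \gamma_i$, i.e.\ $\langle\gamma_{l_2},\beta_{i+1}\rangle = 0$, for $l_2\le i<r$. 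Thus, setting $u := w_2\cdots w_m$ and $\theta := -w_1^{-1}(\beta_1) = -\gamma_{l_2}\in R^+$, the equality $\gamma_{l_2} = \gamma_r$ is \emph{equivalent} to $\theta\perp\Supp(u)$. The whole statement therefore reduces to proving $\theta\not\perp\Supp(u)$.

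To analyse $\theta$, let $\delta_1$ be the simple root with $P^{w_1} = P^{\delta_1}$, so $w_1\in W^{S\setminus\{\delta_1\}}$; such a unique $\delta_1$ exists because $w_1$ is minuscule, hence $P^{w_1}$ is a maximal (minuscule) parabolic. Now $\theta\in R^+(w_1)$ (as $w_1(\theta) = -\beta_1$), so it is supported on $\Supp(w_1)$ and its $\delta_1$-coefficient is at least $1$ (a root with $\delta_1$-coefficient $0$ lies in $R^+_{S\setminus\{\delta_1\}}$, which $w_1$ keeps positive); minuscularity forces this coefficient to equal $1$. Hence $\theta = \delta_1 + \sum_{\beta\in K}c_\beta\beta$ with $K := \Supp(w_1)\setminus\{\delta_1\}$ and $c_\beta\ge 0$. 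On the other hand, goodness of $\hw$ gives $I^{w_1}\cap\Supp(w_1)\subseteq I_u$, and since $I^{w_1} = S\setminus\{\delta_1\}$ while left descents lie in the support, this reads $K\subseteq I_u\subseteq\Supp(u)$.

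I would then argue by contradiction. Assuming $\theta\perp\Supp(u)$, one has in particular $\theta\perp K$, so
\[ 2 = \langle\theta,\theta\rangle = \langle\theta,\delta_1\rangle + \sum_{\beta\in K}c_\beta\langle\theta,\beta\rangle = \langle\theta,\delta_1\rangle; \]
as $G$ is simply-laced this forces $\theta = \delta_1$. Consequently $\delta_1 = \theta\perp\Supp(u)$, and every neighbour of $\delta_1$ inside $\Supp(w_1)$ already lies in $K\subseteq\Supp(u)$; hence $\delta_1$ is orthogonal to every other node of $\Supp(w) = \{\delta_1\}\cup\Supp(u)$, i.e.\ it is an isolated vertex. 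Since $u\neq 1$ makes $\Supp(u)$ non-empty, $\Supp(w)$ is then disconnected. This contradicts the connectedness of $\Supp(w)$: every $\delta\in R^+(w)$ has $\alpha$-coefficient at least $1$ (because $w\in W^I$ with $I = S\setminus\{\alpha\}$), so its support is connected and contains $\alpha$, and $\Supp(w) = \bigcup_{\delta\in R^+(w)}\Supp(\delta)$ is a union of connected sets through the common vertex $\alpha$. Therefore $\theta\not\perp\Supp(u)$, whence $\gamma_{l_2} > \gamma_r$, that is $w_1^{-1}(\beta_1) > w^{-1}(\beta_1)$.

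The main obstacle is exactly this strictness, i.e.\ establishing $\theta\not\perp\Supp(u)$ uniformly across all simply-laced types. The subtle point is that the strict drop in $(\gamma_i)$ need not occur at the first letter of $w_2$, so one cannot simply inspect a single reflection; instead the argument extracts it globally, reducing to the degenerate configuration in which $\theta$ is a single simple root and then invoking connectedness of $\Supp(w)$. I expect the two structural inputs — that goodness of $\hw$ yields $K\subseteq\Supp(u)$, and that minuscularity pins down the $\delta_1$-coefficient of $\theta$ — to be the places demanding the most care.
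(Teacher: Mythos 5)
Your opening reduction is sound, and it is in fact the same skeleton as the paper's proof: identifying $w_1^{-1}(\beta_1) = \gamma_{l_2}$ and $w^{-1}(\beta_1) = \gamma_r$, invoking Proposition \ref{prop:leq}, and reducing strictness to the claim that $\theta := -w_1^{-1}(\beta_1)$ is not orthogonal to $\Supp(u)$, $u := w_2\cdots w_m$ (the paper proves exactly this by exhibiting a letter $\beta_k$ of $u$ with $\langle w_1^{-1}(\beta_1),\beta_k\rangle \geq 1$, then concluding $w^{-1}(\beta_1) \leq v_k(\beta_1) < w_1^{-1}(\beta_1)$). The gap is in your discharge of that claim, at the inclusion $I_u \subseteq \Supp(u)$. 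The paper defines $I_u = \{\alpha \in S \mid s_\alpha u \leq u\}$ with the \emph{non-strict} Bruhat order on $W/W_{I^u}$, since $P_u = P_{I_u}$ must be the full stabilizer of $X(u)$; consequently $I_u$ contains every simple root orthogonal to $\Supp(u)$, not only genuine left descents. Remark \ref{rem:perrin} makes this explicit: $P_{s_\alpha} = P_{\{\alpha\}\cup\alpha^\perp}$, so $I_{s_\alpha} = \{\alpha\}\cup\alpha^\perp \not\subseteq \{\alpha\} = \Supp(s_\alpha)$; indeed the upper bound $I_{w_{i+1}\cdots w_m} \subseteq w_i^\perp \cup \Supp(w_i)$ in the definition of goodness only makes sense for this larger set. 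From goodness and $K \subseteq I_u$ you may only conclude that each $\beta \in K$ lies in $\Supp(u)$ or is orthogonal to $\Supp(u)$ (one checks that $I_u \setminus \Supp(u)$ consists of simple roots orthogonal to $\Supp(u)$). Your key computation then breaks: under the hypothesis $\theta \perp \Supp(u)$, the terms $c_\beta\langle\theta,\beta\rangle$ with $\beta \in K \setminus \Supp(u)$ need not vanish — for a simple root $\beta$ in the support of a root $\theta$, the pairing $\langle\theta,\beta\rangle$ can be $-1$, $0$ or $1$ — so $2 = \langle\theta,\theta\rangle$ does not force $\langle\theta,\delta_1\rangle = 2$, and $\theta = \delta_1$ does not follow. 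The same unproved inclusion is used a second time when you argue that every neighbour of $\delta_1$ in $\Supp(w_1)$ lies in $\Supp(u)$; that particular use could be patched, since Lemma \ref{lem:supp} gives $\Supp(w_1) = \Supp(\theta)$, so $\theta = \delta_1$ would force $w_1 = s_{\delta_1}$ and $K = \emptyset$, but the first use cannot.

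What is missing is precisely the hard content of the paper's argument: non-orthogonality is not extracted from goodness alone but from the quiver combinatorics of Construction 1. The paper first proves Lemma \ref{lem:supp} by induction along $w_1$, then uses the peak ordering together with Stembridge's result that reduced words of a minuscule (fully commutative) element differ only by commutations, to locate the first letter $s_{\beta_k}$ of $u$ that fails to commute with $w_1$; for this letter, $\Supp(w_1^{-1}(\beta_1)) = \Supp(w_1)$ combined with $s_{\beta_k} \not\leq w_1$ yields $\langle w_1^{-1}(\beta_1),\beta_k\rangle \geq 1$, i.e.\ $\langle\theta,\beta_k\rangle \leq -1$. Your orthogonality-versus-connectedness dichotomy is an attractive substitute, and your ancillary steps (the $\delta_1$-coefficient of $\theta$ being $1$ by minuscularity, and the connectedness of $\Supp(w)$ via the inversions of $w \in W^I$) are correct; but without an argument excluding the intermediate configurations — some $\beta \in K$ orthogonal to $\Supp(u)$ with $\langle\theta,\beta\rangle = 1$ — the contradiction does not close, so the proof as written is incomplete.
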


\begin{example}\label{ex:SLbis}
With the notation of Example \ref{ex:SL5}, we have 
$w_1^{-1}(\alpha_2) = - \alpha_1 - \alpha_2$ and
$w^{-1}(\alpha_2) = - \alpha_1 - \alpha_2 -\alpha_3$. 
Therefore, we have indeed $w_1^{-1}(\alpha_2) > w^{-1}(\alpha_2)$.  
Also, $(w'_1)^{-1}(\alpha_4) = -\alpha_4$ and 
$w^{-1}(\alpha_4) = -\alpha_2 -\alpha_3 -\alpha_4$, so that 
$(w'_1)^{-1}(\alpha_4) > w^{-1}(\alpha_4)$.
 
In the setting of Example \ref{ex:SL9}, we obtain
$w_1^{-1}(\alpha_3) = -\alpha_1 - \alpha_2 - \alpha_3 >
-\sum_{i=1}^6 \alpha_i = w^{-1}(\alpha_3)$ 
and 
$(w'_1)^{-1}(\alpha_8) = -\alpha_8 > - \sum_{i=4}^8 \alpha_i = w^{-1}(\alpha_8)$. 
\end{example}

We begin the proof of Proposition \ref{prop:root} with a preliminary result:

\begin{lemma}\label{lem:supp}
We have $\Supp(w_1) = \Supp(w_1^{-1}(\beta_1))$.
\end{lemma}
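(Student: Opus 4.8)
The plan is to introduce the positive root $\eta := -w_1^{-1}(\beta_1)$ and to prove $\Supp(\eta) = \Supp(w_1)$; since $\Supp(w_1^{-1}(\beta_1)) = \Supp(\eta)$, this is exactly the assertion. Note first that $\eta$ really is positive: by Lemma \ref{lem:us}, $\beta_1$ is the unique simple root in $R^+(w_1^{-1})$, so $w_1^{-1}(\beta_1) \in R^-$. One inclusion is immediate: as $w_1 \in W_{\Supp(w_1)}$ and $\beta_1 \in \Supp(w_1)$, the root $\eta$ lies in the root lattice spanned by $\Supp(w_1)$, whence $\Supp(\eta) \subseteq \Supp(w_1)$. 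Everything therefore reduces to the reverse inclusion.

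For that, the main idea I would pursue is to show that $\eta$ is the \emph{greatest} element of the inversion set $R^+(w_1) = \{\mu \in R^+ : w_1(\mu) \in R^-\}$ for the root order $\le$. Granting this, every $\mu \in R^+(w_1)$ satisfies $\mu \le \eta$, i.e. $\eta - \mu$ is a nonnegative combination of simple roots; since $\mu$ is itself such a combination, $\Supp(\mu) \subseteq \Supp(\eta)$. On the other hand, writing $w_1 = s_{\beta_1}\cdots s_{\beta_{l_2}}$ one has $R^+(w_1) = \{\, s_{\beta_{l_2}}\cdots s_{\beta_{j+1}}(\beta_j) : 1 \le j \le l_2 \,\}$, and each of these roots contains $\beta_j$ in its support, so $\bigcup_{\mu \in R^+(w_1)} \Supp(\mu) = \Supp(w_1)$; the $j=1$ term is exactly $\eta = -w_1^{-1}(\beta_1)$. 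Combining, $\Supp(w_1) = \bigcup_\mu \Supp(\mu) \subseteq \Supp(\eta)$, as required. The reason $\eta$ should be the greatest element is the standard description of the extremal inversions: the maximal elements of $R^+(w_1)$ for $\le$ are precisely the roots $-w_1^{-1}(\alpha)$ with $\alpha$ a left descent of $w_1$, so by Lemma \ref{lem:us} there is a single maximal element, and a finite poset with a unique maximal element has a greatest element, namely $\eta$.

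The step I expect to be the main obstacle is the justification that $\eta$ is greatest, i.e. the identification of the maximal inversions with the left descents. The dual statement is easy: the \emph{minimal} elements of $R^+(w_1)$ are its simple roots (the right descents), because a non-simple $\mu \in R^+(w_1)$ splits as $\mu = \mu' + \mu''$ with $\mu',\mu'' \in R^+$, and since $w_1(\mu) \in R^-$ at least one of $w_1(\mu'), w_1(\mu'')$ is negative, producing a strictly smaller element of $R^+(w_1)$. The maximal-element statement is the genuinely delicate one, where the interplay between the ordering of the colors $\beta_1,\dots,\beta_{l_2}$ and the Dynkin adjacencies has to be controlled. As an alternative, concrete handle I would invoke Proposition \ref{prop:leq}: since $w_1^{-1}(\beta_1) = \gamma_{l_2}$ and $\gamma_i = \gamma_{i-1} - c_i\beta_i$ with $c_i = \langle \gamma_{i-1},\beta_i^\vee\rangle$, the inequality $\gamma_i \le \gamma_{i-1}$ forces $c_i \in \{0,1\}$, so $-\gamma_i = -\gamma_{i-1} + c_i\beta_i$ and the supports increase monotonically from $\{\beta_1\}$ to $\Supp(\eta)$; one is then reduced to proving that every color occurring in $w_1$ is eventually adjoined. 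Here $\Supp(w_1)$ is connected (otherwise $w_1$ would factor into two commuting elements with disjoint supports, each contributing a left descent, against Lemma \ref{lem:us}), and the exhaustion of colors is exactly the point where the minuscule, fully commutative nature of $w_1$ and the single-peak structure of the quiver $Q_{w_1}$ enter decisively.
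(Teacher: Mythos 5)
Your reductions are all sound: the easy inclusion $\Supp(w_1^{-1}(\beta_1)) \subseteq \Supp(w_1)$, the fact that $\bigcup_{\mu \in R^+(w_1)} \Supp(\mu) = \Supp(w_1)$, and the observation that everything would follow if $\eta = -w_1^{-1}(\beta_1)$ were the greatest element of the inversion set $R^+(w_1)$. But the pivot of your main route --- that the maximal elements of $R^+(v)$ are precisely the roots $-v^{-1}(\alpha)$ with $\alpha$ a left descent of $v$ --- is not a standard fact, and it is false in general: take $v = w_0 = s_1s_2s_1$ in type $\rA_2$; then $R^+(w_0) = R^+$ has unique maximal element the highest root $\alpha_1 + \alpha_2$, while $-w_0^{-1}(\alpha_1) = \alpha_2$ and $-w_0^{-1}(\alpha_2) = \alpha_1$ are exactly the \emph{minimal} elements. (Your dual argument for minimal inversions is correct, but there is no dual statement for maximal ones, since $\mu \mapsto -v(\mu)$ is not order-reversing.) So even if the claim happens to hold for the minuscule $w_1$ arising from Construction 1, it would need a proof specific to that setting, and such a proof is at least as hard as the lemma itself; you flag this honestly, but it leaves your primary argument without support.

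Your fallback via Proposition \ref{prop:leq} is in fact the paper's strategy, and your reduction there is correct ($c_i \in \{0,1\}$ in the simply-laced case, so the supports of the $-\gamma_i$ grow monotonically from $\{\beta_1\}$); but you stop exactly where the work lies, namely the ``exhaustion of colors.'' The paper completes this by induction on $i$, maintaining $\Supp(v_i) = \Supp(\gamma_i)$ where $v_i = s_{\beta_i} \cdots s_{\beta_1}$. If $\beta_{i+1} \in \Supp(v_i)$, Proposition \ref{prop:leq} gives the step for free. If $\beta_{i+1} \notin \Supp(v_i)$, then since $\gamma_i$ is a negative root supported on $\Supp(v_i)$ one gets $\langle \gamma_i, \beta_{i+1} \rangle \geq 0$ automatically, and equality would force $\beta_{i+1}$ orthogonal to all of $\Supp(\gamma_i) = \Supp(v_i)$; then $s_{\beta_{i+1}}$ commutes with $v_i$, so $\beta_{i+1} \in R^+(v_{i+1}) \subseteq R^+(w_1^{-1})$, whence $\beta_{i+1} \in R^+(w_1^{-1}) \cap S = \{\beta_1\}$ by Lemma \ref{lem:us} --- a contradiction. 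Hence $\langle \gamma_i, \beta_{i+1} \rangle \geq 1$ and the new color is adjoined to $\Supp(\gamma_{i+1})$. Note that your appeal to connectedness of $\Supp(w_1)$ would not suffice on its own: what is needed is that each new color be Dynkin-adjacent to $\Supp(\gamma_i)$ \emph{at the moment it appears}, and connectedness of the full support says nothing about initial segments of the chosen reduced word; it is precisely the induction hypothesis $\Supp(\gamma_i) = \Supp(v_i)$, combined with the uniqueness statement of Lemma \ref{lem:us}, that delivers this. So your proposal identifies the right objects and the right auxiliary results, but the decisive step is missing in both of your suggested routes.
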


\begin{proof}
Recall that $v_i = s_{\beta_i} \cdots s_{\beta_1}$. Let $l = l_2$,  
then $v^{-1}_l=w_1$. Thus, it suffices to show that
$\Supp(v_i) = \Supp(v_i(\beta_1))$ for all $1\leq i \leq l$.  
As the inclusion $\Supp(v_i(\beta_1)) \subset \Supp(v_i)$
is obvious, it suffices in turn to prove the opposite inclusion. 

We argue by induction on $i$, $1\leq i \leq l$. If $i = 1$, then 
$\Supp(v_1) = \{ \beta_1 \} = \Supp(v_1(\beta_1))$.  
Let $1 \leq i \leq l-1$. 
By induction, we may assume that  
$\Supp(v_j) = \Supp(v_j(\beta_1))$ for all $1\leq j \leq i$. 
If $s_{\beta_{i+1}} \leq v_i$, we are done by Proposition \ref{prop:leq}. 
Otherwise, 
$v_{i+1}(\beta_1) = v_i(\beta_1) - 
\langle v_i(\beta_1) , \beta_{i+1} \rangle \beta_{i+1}$. 
Since $R^+(w_1^{-1})\cap S = \{ \beta_1 \}$, and since 
$s_{\beta_{i+1}} \not \leq v_i$, we have  
$\langle v_i(\beta_1) , \beta_{i+1} \rangle \geq 0$. 
Further, if  $\langle v_i(\beta_1) , \beta_{i+1} \rangle = 0$, 
then we have 
$s_{\beta_{i+1}} v_i = v_i s_{\beta_{i+1}}$. Hence, 
$v_{i+1}(\beta_{i+1})$ is a negative root. Therefore, we have  
$\beta_{i+1} \in R^+(w_1^{-1})  \cap S = \{ \beta_1 \}$, forcing 
$\beta_{i+1} = \beta_1$. This is a contradiction. Hence, we have 
$\langle v_i(\beta_1) , \beta_{i+1} \rangle \geq 1$.
Thus, we obtain $\Supp(v_{i+1}) =\Supp(v_{i+1}(\beta_1))$.  
\end{proof}

We may now prove Proposition \ref{prop:root}.
Since $R^+(w) \cap S = \{ \alpha \}$, there is an integer $2 \leq t \leq m$ 
such that $w_1$ does not commute with $w_t$. Let $2 \leq t_0 \leq m$ be
the least integer such that $w_1$ does not commute with $w_{t_0}$.  
  
Let $s = \ell_{t_0} + 1$ and $e = \ell_{t_0+1}$.  Then we have  
$w_{t_0} = s_{\beta_s} s_{\beta_{s+1}} \cdots s_{\beta_e}$.
By \cite[Definition 2.3]{Pe07} and \cite[Definition 4.4(i)]{Pe07}, it 
follows that for any $i<s$,  $\langle \beta_i, \beta_s \rangle \neq 0$ 
implies $i\preceq s$. This is a contradiction to $s$ being a peak.
Therefore, we have $\langle \beta_i , \beta_s \rangle =0$ 
for all $i<s$. So $s_{\beta_s}$ commutes with $w_i$ for all 
$1 \leq i \leq t_0-1$. In particular, $s_{\beta_s}$ commutes with $w_1$.

Let $s+1 \leq k \leq e$ be the least integer such that $s_{\beta_k}$ 
does not commute with $w_1$. Since $s_{\beta_j}$ commutes with 
$w_1$ for all $s \leq j \leq k-1$ and since any two reduced decompositions 
of $w$ differ only by commuting relations (see \cite[Prop.~2.1]{Stembridge}),  
$s_{\beta_j}$ commutes with $s_{\beta_f}$ for all $1 \leq f \leq l$ 
and for all $s \leq j \leq k-1$.  In particular, we have 
$s_{\beta_k} \not\leq w_1$.  Since $w_i$ commutes with $w_1$ for all 
$2 \leq i \leq t_0 - 1$, and $s_{\beta_j}$ commutes with $v_l = w_1^{-1}$ for all 
$s \leq j \leq k-1$, we have $v_k(\beta_1) = s_{\beta_k}(w_1^{-1}(\beta_1))$. 

On the other hand, we have   
\[ v_k(\beta_1) = s_{\beta_k}(w_1^{-1}(\beta_1)) 
= w_1^{-1}(\beta_1) - \langle w_1{-1}(\beta_1), \beta_k \rangle \beta_k. \] 
By Lemma \ref{lem:supp},  we have 
$\Supp(w_1) = \Supp(w_1^{-1}(\beta_1))$. Since $s_{\beta_k}$ 
does not commute with $w_1$, and $s_{\beta_k} \not\leq w_1$,  
we have $\langle w_1^{-1}(\beta_1), \beta_k \rangle \geq 1$. Thus, 
we have $v_k(\beta_1)  < w_1^{-1}(\beta_1)$. By Proposition \ref{prop:leq}
again, we have $w^{-1}(\beta_1) \leq v_k(\beta_1)$. So, we are done.

\subsection{Equality of Weyl groups}
\label{subsec:weyl}

We still keep the notation of Subsection \ref{subsec:seq}, and 
recall from Lemma \ref{lem:us} that 
$R^+(w_1^{-1}) \cap S = \{ \beta_1 \}$. 
Also, note that we have 
\[ \Peaks(Q_w)=\{1, l_2 + 1, l_3 + 1, \ldots, l_m + 1\}. \]
By Construction 1, we have $\Peaks(Q_{w})\cap Q_w(\{l_j + 1\})=\{ l_j + 1 \}$. 
On the other hand, $\beta(\Peaks(Q_w)) = R^+(w^{-1}) \cap S$. 
Thus, we have $R^+(w_j^{-1}) \cap S= \{ \beta_{l_j + 1} \}$.
Let $w' := w_2 w_3 \cdots w_m$. As
$\Peaks(Q_{w'})=\{l_2 + 1, l_3 + 1, \ldots, l_m + 1\}$, we have 
$R^+((w')^{-1})\cap S=\{ \beta_{l_2 + 1}, \beta_{l_3 + 1}, \ldots , \beta_{l_m + 1} \}$.

Let $T_{w_1}$ be the neutral component of $T \cap G_{w_1}$.
Note that $T_{w_1}$ is a maximal torus of $G_{w_1}$. Further, 
we have an isomorphism of Weyl groups 
$W(G_{w_1},T_{w_1}) \simeq W(L_{\Supp(w_1)}, T)$.
This identifies $W(G_{w_1},T_{w_1})$ with a subgroup of $W$.

\begin{lemma}\label{lem:length}
For any $u\in W(G_{w_{1}}, T_{w_{1}})$, we have 
$\ell(uw') = \ell(u )+ \ell(w')$.
\end{lemma}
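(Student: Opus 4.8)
The plan is to convert the length identity into a condition on inversion sets, reduce that condition to a statement about the colours of peaks, and then settle the peak statement with the quiver combinatorics of Construction~1. The first two steps are formal; the last is the real content.

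First I would record that, under the identification $W(G_{w_1},T_{w_1}) \simeq W(L_{\Supp(w_1)},T)$, this group is the standard parabolic subgroup $W_J$ of $W$ generated by the $s_\beta$ with $\beta \in J := \Supp(w_1)$. By the theory of minimal-length coset representatives (e.g.~\cite[Ch.~IV]{Bourbaki}), the identity $\ell(uw') = \ell(u) + \ell(w')$ holds for \emph{every} $u \in W_J$ if and only if $w'$ is the element of least length in the right coset $W_J w'$; taking $u = s_\beta$ shows this is equivalent to $(w')^{-1}(\beta) \in R^+$ for all simple $\beta \in J$, that is, to $R^+((w')^{-1}) \cap J = \emptyset$. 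So it suffices to prove this last equality.

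Second, I would pass from the simple roots of $J$ to all positive roots supported on $J$, and then pin down the simple members of $R^+((w')^{-1})$. Writing $v := (w')^{-1}$, the complement $R^+ \setminus R^+(v)$ is closed under root addition: if $v(\mu), v(\nu) \in R^+$ and $\mu + \nu \in R$, then $v(\mu+\nu) = v(\mu)+v(\nu) \in R^+$. Since every positive root of the subsystem $R_J$ spanned by $J$ is built from the simple roots in $J$ by successive additions remaining in $R$, the condition $R^+((w')^{-1}) \cap J = \emptyset$ is equivalent to $R^+((w')^{-1}) \cap (R_J \cap R^+) = \emptyset$. By the computation opening this subsection, $R^+((w')^{-1}) \cap S = \{\beta_{l_2+1}, \ldots, \beta_{l_m+1}\}$, the colours of the peaks of $Q_{w'}$. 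Thus the lemma reduces to the assertion that no peak colour $\beta_{l_j+1}$ with $2 \le j \le m$ lies in $\Supp(w_1) = \{\beta_1, \ldots, \beta_{l_2}\}$.

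This disjointness is the main obstacle, and it is exactly here that the peak ordering of Construction~1 must enter; everything above is purely formal. I would attack it with the tools of \S\ref{subsec:seq}: by Lemma~\ref{lem:us} the first piece $w_1$ has the single peak colour $\beta_1$, and by Lemma~\ref{lem:supp} the set $\Supp(w_1)$ is precisely the support of the negative root $\gamma_{l_2} = w_1^{-1}(\beta_1)$. Supposing a later peak colour $\beta := \beta_{l_j+1}$ lay in $\Supp(w_1)$, I would follow the monotone sequence $\gamma_i$ of Proposition~\ref{prop:leq} past the index $l_2$: as $l_j+1$ is a peak occurring strictly after $w_1$, the reflection $s_\beta$ lowers $\gamma_{l_j}$, so $\langle \gamma_{l_j}, \beta \rangle \geq 0$ with strict inequality at the peak, and I would combine this with the good-decomposition inclusion $I_{w'} \subseteq w_1^\perp \cup \Supp(w_1)$ together with the minuscule bound used in Lemma~\ref{lem:geq} to force a contradiction. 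This argument runs parallel to the proof of Proposition~\ref{prop:root} and is the delicate step, since it is the one place where the combinatorics of the quiver $Q_w$, rather than pure length theory, is indispensable.
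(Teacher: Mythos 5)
Your formal reduction (the first two steps) is correct, and in fact slightly cleaner than the paper's: identifying $W(G_{w_1},T_{w_1})$ with the standard parabolic subgroup $W_J$, $J=\Supp(w_1)=\{\beta_1,\dots,\beta_{l_2}\}$, the lemma is indeed equivalent, by the theory of minimal-length coset representatives, to $R^+((w')^{-1})\cap J=\emptyset$, i.e.\ to the disjointness $\{\beta_{l_2+1},\dots,\beta_{l_m+1}\}\cap\Supp(w_1)=\emptyset$ (your detour through $R_J\cap R^+$ is harmless but unnecessary). The paper argues instead by showing $R^+((w')^{-1})\cap R^+(u)=\emptyset$ for each $u\in W_J$ directly: every $\gamma\in R^+((w')^{-1})$ dominates some peak colour $\beta_{l_j+1}$, and since $\beta_{l_j+1}\notin\Supp(u)$, the coefficient of $\beta_{l_j+1}$ in $u(\gamma)$ stays positive, so $u(\gamma)>0$. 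Both routes hinge on exactly the same combinatorial fact: the later peak colours do not lie in $\Supp(w_1)$.

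That fact is precisely where your proposal has a genuine gap: you name it as the main obstacle but never prove it, and the sketch you offer would not close. The inequality $\langle\gamma_{l_j},\beta_{l_j+1}\rangle\geq 0$ that you extract is just a restatement of Proposition \ref{prop:leq} and holds \emph{unconditionally} --- whether or not $\beta_{l_j+1}\in\Supp(w_1)$ --- so it cannot by itself produce a contradiction from that assumption; no mechanism is given for the claimed ``strict inequality at the peak'', and neither Lemma \ref{lem:geq} (which concerns pairings with $\beta_1$, not with the later peak colours) nor the inclusion $I_{w'}\subset w_1^{\perp}\cup\Supp(w_1)$ is brought into actual contact with the statement. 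The missing ingredient is the quiver combinatorics of Construction 1, used the same way as in the proof of Proposition \ref{prop:root}: by \cite[Def.~2.3, Def.~4.4(i)]{Pe07}, if some vertex $f\leq l_2$ had $\langle\beta_f,\beta_{l_j+1}\rangle\neq 0$, then $f\preceq l_j+1$ in the quiver order, contradicting that $l_j+1$ is a peak of $Q_w$; hence $\beta_{l_j+1}$ is orthogonal to every root in $\Supp(w_1)$, and in particular $\beta_{l_j+1}\notin\Supp(w_1)$. Without this peak argument (or a genuine substitute), your proof establishes only the formal equivalence, not the lemma.
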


\begin{proof} 
Since $\ell(v) = \vert R^+(v)  \vert$ for all $v \in W$, 
it suffices to show that $R^+((w')^{-1})\cap R^+(u)=\emptyset$.

Let $\gamma \in R^+((w')^{-1})$. Since 
$R^+((w')^{-1}) \cap S 
= \{ \beta_{l_2 + 1}, \beta_{l_3 + 1}, \ldots , \beta_{l_m + 1} \}$, 
there exists an integer $2 \leq j \leq m$ such that 
$\beta_{l_j + 1} \leq \gamma$. 
By \cite[Def.~2.3]{Pe07} and \cite[Def.~4.4 (i)]{Pe07},  
it follows that if there is an integer $1\leq f \leq l_2$
such that $\langle \beta_f , \beta_{l_j+1} \rangle \neq 0$, then we have
$f \preceq  1+l_j$. This is a contradiction to $1+l_j$ being a peak of $Q_w$.
Therefore, $s_{\beta_{1+l_j}}$ commutes with $s_{\beta_f}$ for all
$1 \leq f \leq l_2$.
In particular, we have $\beta_{l_j + 1} \notin \Supp(w_1)$, and 
$s_{\mu}(\beta_{l_j + 1})=\beta_{l_j + 1}$
for all $\mu \in \Supp(w_1)$.  Further, since 
$u \in  W(G_{w_1}, T_{w_1})$, we have 
$\Supp(u) \subset \Supp(w_1)$. Therefore, we have 
$u(\beta_{l_j + 1})=\beta_{l_j + 1}$. In particular, the coefficient of 
$\beta_{l_j + 1}$ in the expression of $u(\gamma)$ 
is equal to the coefficient of $\beta_{l_j + 1}$ in the expression of 
$\gamma$, and it is positive since $\beta_{l_j + 1} \leq \gamma$. 

Thus, we have $\gamma \notin R^+(u)$ as desired.
\end{proof}

\begin{example} 
With the notation of Example \ref{ex:SL5}, the standard ordering 
of $\Peaks(Q_w)$ has $w_1=s_2s_1$ and $w'=w_2=s_4s_3s_2$. Note that 
$R^+((w')^{-1})\subset \{ \beta\in R^+ ~\vert~ \alpha_4 \leq \beta \}$ 
and $s_4 \nleq w_1$. Therefore, for any $u\in W(G_{w_1}, T_{w_1})$, 
we have indeed $\ell(uw') = \ell(u) + \ell(w')$. 

For the reverse ordering of $\Peaks(Q_w)$, 
we have $w'_1 = s_4$, and $w' =w'_2 = s_2s_1s_3s_2$. Clearly, for any 
$u\in W(G_{w'_1}, T_{w'_1})$, we have again 
$\ell(uw') = \ell(u) + \ell(w')$. 
\end{example}

\begin{example}  
In the setting of Example \ref{ex:SL9}, for the standard ordering 
$\alpha_3 \preceq \alpha_5 \preceq \alpha_8$ of $\Peaks(Q_w)$, we have 
$w_1 = s_3s_2s_1$ and $w' = s_5s_4s_3s_2s_6s_5s_4s_3s_8s_7s_6s_5s_4$. 
As a consequence,
$R^+((w')^{-1}) \subset \{\beta ~\vert~ \alpha_5 \leq \beta \} \cup \{\alpha_8\}$. 
Therefore, for any $u\in W(G_{w_1}, T_{w_1}),$ we obtain 
$\ell(uw')=\ell(u)+\ell(w')$ as asserted.

For the ordering $\alpha_8 \preceq' \alpha_3 \preceq' \alpha_5$, 
we have $w'_1 = s_8$ and $w' = s_3s_2s_1s_5s_4s_3s_2s_6s_5s_4s_3s_7s_6s_5s_4$.
It readily follows that $\ell(uw') =\ell(u) + \ell(w')$
for any $u \in W(G_{w_1'}, T_{w_1'})$.  
\end{example}

\begin{proposition}\label{prop:weyl}
We have $W(G_{w_1,\hat{x}}, T_{w_1}) = W(G_{w_1, w_1 x_1}, T_{w_1})$.
\end{proposition}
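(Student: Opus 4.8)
Recall that $G_{w_1,\hx} = G_{w_1} \cap w P w^{-1}$ and $G_{w_1, w_1 x_1} = G_{w_1} \cap w_1 P^{w_1} w_1^{-1}$, both being intersections of two parabolic subgroups of $G_{w_1}$ containing the maximal torus $T_{w_1}$. By Lemma \ref{lem:intersection}, each has a Levi decomposition with Levi subgroup containing $T_{w_1}$, and the Weyl group of such a Levi is generated by the reflections $s_\mu$ with $\mu \in \Supp(w_1)$ such that both $U_\mu$ and $U_{-\mu}$ lie in the intersection. The inclusion \eqref{eqn:incl} gives $W(G_{w_1,\hx}, T_{w_1}) \subseteq W(G_{w_1, w_1 x_1}, T_{w_1})$, so the task is to prove the reverse inclusion.

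**Plan.** The Weyl group $W(G_{w_1, w_1 x_1}, T_{w_1})$ is the Weyl group of the isotropy group of the point $w_1 x_1$ in $X(w_1)$ under $G_{w_1}$; equivalently, it is the stabilizer in $W(G_{w_1}, T_{w_1})$ of the coset $w_1 W_{I^{w_1}}$, i.e.\ $W(G_{w_1,T_{w_1}}) \cap w_1 W_{I^{w_1} \cap \Supp(w_1)} w_1^{-1}$. Similarly, $W(G_{w_1,\hx}, T_{w_1})$ is the stabilizer in $W(G_{w_1}, T_{w_1})$ of the coset $w W_{I^w}$, realized via the identification $\hf(\hx) = w_1 x_1$ and $\hpi(\hx) = wx$. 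The key point is that, although these two isotropy data live over different base points ($w_1 x_1$ in $X(w_1)$ versus $wx$ in $G/P$), the relevant stabilizing reflections coincide. I would reduce the reverse inclusion to the following combinatorial statement: for $u \in W(G_{w_1}, T_{w_1})$, if $u$ fixes the coset $w_1 W_{I^{w_1}}$ (equivalently $u \in W(G_{w_1,w_1x_1}, T_{w_1})$), then $u$ fixes $wx$, i.e.\ $u w W_{I^w} = w W_{I^w}$.

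**The combinatorial core.** Here is where Lemma \ref{lem:length} does the decisive work. Take $u \in W(G_{w_1}, T_{w_1})$ stabilizing $w_1 x_1$; write $w = w_1 w'$ with $w' = w_2 \cdots w_m$ and $\ell(w) = \ell(w_1) + \ell(w')$. I would show that $u$ stabilizes $w x$ by computing $u w = u w_1 w'$. Since $u$ fixes $w_1 x_1$, we have $u w_1 \in w_1 W_{I^{w_1}}$, so $u w_1 = w_1 u'$ for some $u' \in W_{I^{w_1} \cap \Supp(w_1)}$; by the goodness condition $I^{w_1} \cap \Supp(w_1) \subset I_{w'}$, this $u'$ fixes the relevant flag data for $w'$. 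The essential input is that Lemma \ref{lem:length} guarantees $\ell(u w') = \ell(u) + \ell(w')$, which controls how $u$ interacts with $w'$: the positive roots sent negative by $(w')^{-1}$ are disjoint from those of $u$, because (as shown in the proof of that lemma) each $\beta_{l_j+1}$ is orthogonal to all of $\Supp(w_1)$ and hence fixed by $u$. This orthogonality lets me commute $u$ past $w'$ modulo $W_{I^w}$, concluding $u w W_{I^w} = w W_{I^w}$, i.e.\ $u \in W(G_{w_1,\hx}, T_{w_1})$.

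**Main obstacle.** The delicate step is the passage from ``$u$ fixes the coset $w_1 W_{I^{w_1}}$'' to ``$u$ fixes the coset $w W_{I^w}$,'' because a priori $I^w \subsetneq I^{w_1}$ and the parabolic $P^{w_1}$ is coarser than $P = P^w$. I expect the friction to lie in verifying that no extra reflection is lost when refining the isotropy from $X(w_1)$ to $G/P$: one must rule out the possibility that some $s_\mu$ with $\mu \in \Supp(w_1)$ fixes $w_1 x_1$ but moves $wx$. This is precisely controlled by the orthogonality $\langle \mu, \beta_{l_j+1}\rangle$ considerations established in Lemma \ref{lem:length} together with the peak structure of the quiver $Q_w$ from Construction 1; the argument of Lemma \ref{lem:length} shows that $u$ acts trivially on the simple roots $\beta_{l_j+1}$ generating the ``new'' directions of $w'$, so the finer isotropy cannot acquire additional reflections. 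Assembling these pieces yields the reverse inclusion and hence the claimed equality of Weyl groups.
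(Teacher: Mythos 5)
Your reduction coincides with the paper's: the forward inclusion via equivariance of $\hf$, the identification $W(G_{w_1,\hx},T_{w_1}) = W(G_{w_1,wx},T_{w_1})$ via the local isomorphism $\hpi$ at $\hx$, and the resulting task of showing that any $u \in W(G_{w_1},T_{w_1})$ fixing $w_1 x_1$ also fixes $wx$; you also correctly single out Lemma \ref{lem:length} as the decisive input. The gap is in your combinatorial core. The orthogonality established inside the proof of Lemma \ref{lem:length} concerns only the peak simple roots $\beta_{l_j+1}$ of $w'$, not all of $\Supp(w')$; in general $\Supp(w_1) \cap \Supp(w') \neq \emptyset$, so no commutation of $u$ (nor of $u' = w_1^{-1}uw_1$) past $w'$ is available, even modulo $W_{I^w}$. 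In Example \ref{ex:SL5}, $\alpha_2$ lies in both $\Supp(w_1) = \{\alpha_1,\alpha_2\}$ and $\Supp(w_2) = \{\alpha_2,\alpha_3,\alpha_4\}$; taking $u = s_1$, which does fix $w_1 x_1$ (since $w_1^{-1}(\alpha_1) = \alpha_2 \in I^{w_1}$), one computes $w'^{-1} u w' = s_{\alpha_1+\alpha_2} \notin W_{I^w}$, although $u$ does fix $wx$ (indeed $w^{-1}(\alpha_1) = \alpha_3$). So ``commuting $u$ past $w'$ modulo $W_{I^w}$'' is not a valid mechanism, and the length additivity $\ell(uw') = \ell(u) + \ell(w')$ by itself does not yield $w'^{-1}u'w' \in W_{I^w}$.

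Your fallback observation --- that goodness puts $u'$ in $W_{I_{w'}}$, so $u'$ ``fixes the relevant flag data'' --- only gives that $u'$ stabilizes the \emph{variety} $X(w')$, hence an upper bound: the minimal representative $v'$ of $uw$ in $W^I$ satisfies $v' \leq w$. Variety-stabilization does not imply point-fixing: $s_{\alpha}$ stabilizes the line $X(s_{\alpha})$ but swaps its two $T$-fixed points. What is missing is the matching lower bound, and this is exactly how the paper uses Lemma \ref{lem:length}: from $uw_1 = w_1 \tau$ with $\tau \in W_{I^{w_1}}$ and $w_1 \in W^{I^{w_1}}$ one gets $w_1 \leq uw_1$ in the Bruhat order; applying Lemma \ref{lem:length} to $uw_1 \in W(G_{w_1},T_{w_1})$ (not to $u$ itself, as you do) gives $\ell(uw_1w') = \ell(uw_1) + \ell(w')$, whence $w = w_1 w' \leq uw$ and so $w \leq v'$. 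Combined with $v' \leq w$ (which uses $G_{w_1} \subset G_w \cap P_w$, i.e., Lemma \ref{lem:inc}, to see that $u$ stabilizes $X(w)$), this forces $v' = w$, i.e., $uwx = wx$. Your proposal assembles the right ingredients --- the reduction, goodness, and length additivity --- but replaces this two-sided Bruhat sandwich with a commutation claim that fails.
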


\begin{proof}
Since $\hf : \hX(\hw) \to X(w_1)$ is $G_{w_1}$-equivariant
and $\hf(\hx) = w_1 x_1$,  we have  
\[ W(G_{w_1,  \hx}, T_{w_1}) \subset W(G_{w_1, w_1 x_1}, T_{w_1}). \]
Also, since $\hpi: \hX(\hw) \to X(w)$ is a local isomorphism at $\hx$
and $\hpi(\hx) = wx$, we have 
\[ W(G_{w_1,\hx}, T_{w_1}) = W(G_{w_1, wx}, T_{w_1}). \] 
Therefore, it suffices to prove that 
\[ W(G_{w_1, w_1 x_1}, T_{w_1}) \subset W(G_{w_1, wx}, T_{w_1}). \] 
Let $v \in W(G_{w_1, w_1 x_1}, T_{w_1})$. Then   
$v w_1 x_1 = w_1 x_1$ and hence there exists 
$\tau \in W(P^{w_1}, T)$ such that $v w_1=w_1 \tau$. Thus, we have 
$w_1\leq v w_1$ in $W(G_{w_1}, T_{w_1})$.  Note that both $v$ and $w_1$ 
are in $W(G_{w_1}, T_{w_1})$.  Therefore, by Lemma \ref{lem:length}, we have 
$\ell(v w_1 w') = \ell(v w_1) + \ell(w')$. So, $w = w_1 w'  \leq v w_1 w' = v w$. 
Since $w \in W^I$, it follows that $w \leq v'$ in $W^I$, where $v'$ denotes 
the minimal representative of $vw$ in $W^I$. 

On the other hand, we have $G_{w_1}\subset G_w \cap P_w$. Therefore, 
$v \in W(G_w\cap P_w, T_w)$.
Thus, we have $v X(w) = X(w)$. Therefore, $vw$ is in a coset 
$u W_I$ with $u\in W^I$ such that $u \leq w$. In particular,  
we have $v' \leq w$ in  $W^I$. Thus, we obtain $v'=w$. 
Therefore, we have $v w x=w x$ as desired.
\end{proof}

\bibliographystyle{amsalpha}

\end{document}